\date{}
\newtheorem{theorem}{Theorem}[section]
\newtheorem{lemma}[theorem]{Lemma}
\newtheorem{corollary}[theorem]{Corollary}
\newtheorem{proposition}[theorem]{Proposition}
\newtheorem{problem}[theorem]{Problem}
\newtheorem{example}[theorem]{Example}
\newtheorem{claim}[theorem]{Claim}
\theoremstyle{definition}
\newtheorem{definition}[theorem]{Definition}
\newtheorem{remark}[theorem]{Remark}
\newcommand{\IR}{\mathbb R}
\newcommand{\IZ}{\mathbb Z}
\newcommand{\C}{\mathcal C}
\newcommand{\Tau}{\mathcal T}
\newcommand{\IC}{\mathbb C}
\newcommand{\IT}{\mathbb T}
\newcommand{\IN}{\mathbb N}
\newcommand{\N}{\mathcal N}
\newcommand{\E}{\mathcal E}
\newcommand{\U}{\mathcal U}
\newcommand{\V}{\mathcal V}
\newcommand{\Ra}{\Rightarrow}
\newcommand{\diam}{\mathrm{diam}}
\newcommand{\supp}{\mathrm{supp}}
\newcommand{\w}{\omega}
\newcommand{\e}{\varepsilon}
\newcommand{\pr}{\mathrm{pr}}
\newcommand{\add}{\mathrm{add}}
\newcommand{\non}{\mathrm{non}}
\newcommand{\cov}{\mathrm{cov}}
\newcommand{\voc}{\mathrm{voc}}
\newcommand{\cof}{\mathrm{cof}}
\newcommand{\I}{\mathcal I}
\newcommand{\J}{\mathcal J}
\newcommand{\A}{\mathcal A}
\newcommand{\F}{\mathcal F}
\newcommand{\M}{\mathcal M}
\begin{document}
\title[The closed Steinhaus properties of $\sigma$-ideals]{The closed Steinhaus properties\\ of $\sigma$-ideals on topological groups}

\author{Taras Banakh,  Lesia Karchevska, Alex Ravsky}
\address{T.Banakh: Ivan Franko National University of Lviv (Ukraine), and Jan Kochanowski University in Kielce (Poland)}
\email{t.o.banakh@gmail.com}
\address{L.Karchevska: Ivan Franko National University of Lviv (Ukraine)}
\email{crazymats@ukr.net}
\address{A.Ravsky: Pidstryhach Institute for Applied Problems of Mechanics and Mathematics of National Academy of Sciences, Lviv, Ukraine}
\email{oravsky@mail.ru}
\subjclass{22A15, 54H99,54H11}
\dedicatory{Dedicated to the 130th birthday of Hugo Steinhaus (1887--1972)}

\begin{abstract} We prove that any meager quasi-analytic subgroup of a topological group $G$ belongs to every $\sigma$-ideal $\I$ on $G$ possessing the closed $\pm n$-Steinhaus property for some $n\in\IN$. An ideal $\I$ on a topological group $G$ is defined to have the {\em closed $\pm n$-Steinhaus property} if for any closed subsets $A_1,\dots,A_n\notin\I$ of $G$ the product $(A_1\cup A_1^{-1})\cdots (A_n\cup A_n^{-1})$ is not nowhere dense in $G$. 
Since the $\sigma$-ideal $\mathcal E$ generated by closed Haar null sets in a locally compact group $G$ has the closed $\pm 2$-Steinhaus property, we conclude that each meager quasi-analytic subgroup $H\subset G$ belongs to the ideal $\mathcal E$. For analytic subgroups of the real line this result was proved by Laczkovich in 1998.  We shall discuss  possible generalizations of the Laczkovich Theorem to non-locally compact groups and construct an example of a meager Borel subgroup in $\IZ^\w$ which cannot be covered by countably many closed Haar-null (or even closed Haar-meager) sets. On the other hand, assuming that $\cof(\M)=\cov(\M)=\cov(\N)$ we construct a subgroup $H\subset\IZ_2^\w$ which is meager and Haar null but does not belong to the $\sigma$-ideal $\E$. The construction uses a new cardinal characteristic $\voc^*(\I,\J)$ which seems to be interesting by its own.
\end{abstract}

\maketitle 

\section{Introduction}

By a classical result of S.~Banach \cite{Banach}, any subgroup $H$ with the Baire property in a topological group $G$ is either open or belongs to the $\sigma$-ideal $\M$ of meager sets in $G$.
On the other hand, the classical result of Steinhaus \cite{S20} implies that every Haar measurable subgroup $H$ of a locally compact $G$ is either open or belong to the $\sigma$-ideal $\mathcal N$ of Haar null sets in $G$. Much later, in 1998 Laczkovich \cite{La} unified these two results and proved that each analytic subgroup $H$ of a Polish locally compact group $G$ is either open or belongs to the $\sigma$-ideal $\mathcal E$ generated by closed Haar-null sets in $G$. We recall that a topological space $A$ is {\em analytic} if it is a continuous image of a Polish space. A non-empty family $\I$ of subsets of a set $X$ is called an {\em ideal} on $X$ if $\I$ is closed under taking subsets and finite unions. If $\I$ is closed under countable unions, then $\I$ is called a {\em $\sigma$-ideal}.

Motivated by these classical results, in this paper we shall consider the following problem.

\begin{problem}\label{probmain} Detect $\sigma$-ideals $\I$ on Polish groups $G$ containing all meager analytic subgroups of $G$.
\end{problem}

To answer this problem in Section~\ref{s1} we shall define ideals with the closed $\pm n$-Steinhaus property on topological groups and shall prove that such ideals contain all non-open analytic subgroups and all meager quasi-analytic subgroups of the group. Quasi-analytic spaces are introduced and studied in Section~\ref{s0}. In Section~\ref{s2} we shall construct examples of $\sigma$-ideals distinguishing the closed $\pm n$-Steinhaus properties for various $n$. In Section~\ref{s3} we discuss possible extensions of the Laczkovich theorem to non-locally compact Polish groups and in Section~\ref{s4} will construct an example of a meager Borel (more precisely, $\sigma$-Polish) subgroup $G$ in $\IZ^\w$ which cannot be covered by countably many Haar-null sets in $\IZ^\w$. In the final Section~\ref{s7} given two ideal $\I,\J$ on a group $G$ we study the problem of the existence of a subgroup $H\subset G$ such that $H\in\I\setminus\J$. In particular, assuming that $\cof(\M)=\cov(\M)=\cov(\N)$ we construct a subgroup $H$ in the Cantor cube $2^\w$ which belongs to the family $(\M\cap\N)\setminus \E$ (by the Laczkovich Theorem \cite{La} such subgroup $H$ cannot be analytic). In the construction of $H$ an important role belongs to a new cardinal characteristic $\voc^*(\I,\J)$, for which many natural questions still remain open.

\section{Quasi-analytic and universally $\I$-meager sets}\label{s0}

A topological space $X$ is defined to be
\begin{itemize}
\item {\em Baire} if for any countable family $\U$ of open dense subsets in $X$ the intersection $\bigcap\U$ is dense in $X$;
\item {\em hereditarily Baire} if every closed subspace of $X$ is Baire;
\item {\em quasi-analytic} if $X$ is a continuous image of a hereditarily Baire metrizable separable space.
\end{itemize}

Since  Polish spaces are hereditarily Baire, every analytic space is quasi-analytic. Examples of spaces which are not quasi-analytic are uncountable universally meager spaces.

Following \cite{Zak2}, we call a subset $A$ of a Polish space $Y$ {\em universally meager} if for any continuous nowhere constant map $f:X\to Y$ defined on a Polish space $X$ the preimage $f^{-1}(A)$ is meager in $X$. A map $f:X\to Y$ is called {\em nowhere constant} if for each non-empty open set $U\subset X$ the image $f(U)$ contains more than one point.

This notion is a partial case of the notion of a universally $\I$-meager set defined for any ideal $\I$ as follows.

\begin{definition}\label{d:uIm} Let $\I$ be an ideal on a topological space $X$. A subset $A\subset X$ is called {\em universally $\I$-meager} if for any continuous function $f:Z\to A$ defined on a Baire separable metrizable space $Z$ there exists a non-empty open set $U\subset Z$ with $f(U)\in\I$.
\end{definition}

Definition~\ref{d:uIm} implies the following simple proposition.

\begin{proposition}\label{p:elementary} Let $\I\subset\J$ be two ideals on a topological space $X$.
\begin{enumerate}
\item Each set $A\in\I$ is universally $\I$-meager.
\item If a set $A\subset X$ is universally $\I$-meager, then $A$ is universally $\J$-meager.
\end{enumerate}
\end{proposition}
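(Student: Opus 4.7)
The plan is that both parts reduce directly to the definition of universal $\I$-meagerness, and no clever construction is needed; the whole proposition is essentially a bookkeeping remark, which is presumably why the authors label it ``simple''. I will just need to make sure the ``non-empty open $U$'' clause in Definition~\ref{d:uIm} is handled with the obvious convention that the space $Z$ on which the continuous map is defined is itself non-empty (otherwise no set would ever be universally $\I$-meager, including the empty set, so the definition is implicitly taken over non-empty $Z$).

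For part (1), I would take any $A\in\I$ and any continuous $f:Z\to A$ with $Z$ a non-empty Baire separable metrizable space. I would then simply witness the required open set by $U:=Z$. Indeed, $f(U)=f(Z)\subset A$, and since $\I$ is an ideal it is closed under taking subsets, so $f(U)\in\I$. This is the whole argument for (1).

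For part (2), I would assume $A\subset X$ is universally $\I$-meager, and pick an arbitrary continuous $f:Z\to A$ from a non-empty Baire separable metrizable space $Z$. By hypothesis applied to this same $f$, there exists a non-empty open $U\subset Z$ with $f(U)\in\I$. Since $\I\subset\J$, we also have $f(U)\in\J$, and the same $U$ witnesses the universal $\J$-meagerness of $A$.

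In short, there is no real obstacle: (1) is just ``$\I$ is closed under subsets'' applied to $f(Z)\subset A$, and (2) is just monotonicity of membership in a larger ideal. The only thing worth flagging in the write-up is the non-emptiness convention on $Z$, so that the ``non-empty open $U$'' requirement in Definition~\ref{d:uIm} can actually be met.
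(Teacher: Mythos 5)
Your argument is correct and is exactly the routine verification the paper has in mind: the paper gives no proof at all, stating only that the proposition follows immediately from Definition~\ref{d:uIm}, and your two observations (take $U=Z$ and use closure of $\I$ under subsets for (1); pass from $\I$ to $\J$ for (2)) are the intended content. The remark about non-emptiness of $Z$ is a reasonable reading of the definition and does not change anything.
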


\begin{proposition}\label{p:uM-M} A subset $A$ of a separable metrizable space $X$ belongs to the ideal $\M$ of meager sets in $X$ if and only if $A$ is universally $\M$-meager in $X$.
\end{proposition}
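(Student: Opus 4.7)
The plan is to prove the two implications separately. The forward direction---that $A\in\M$ implies $A$ is universally $\M$-meager---is immediate from Proposition~\ref{p:elementary}(1), so the substance of the argument lies in the converse, which I would prove by contraposition.

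Assuming $A\notin\M$, I must produce a Baire separable metrizable space $Z$ and a continuous map $f\colon Z\to A$ such that $f(U)\notin\M$ for every non-empty open $U\subseteq Z$. The natural candidate is to take $Z:=A'$ with $f$ the inclusion, where $A'$ is obtained from $A$ by deleting its ``locally meager part''. Explicitly, using that the separable metrizable space $X$ is Lindel\"of, I would let $V^\star$ be the union of all open sets $V\subseteq X$ with $A\cap V\in\M$, so that a countable subcovering yields $A\cap V^\star\in\M$. Setting $A':=A\setminus V^\star$, the non-meagerness of $A$ forces $A'\notin\M$, hence $A'\neq\emptyset$; moreover, by construction, whenever $V$ is open in $X$ with $V\cap A'\neq\emptyset$, $A\cap V$ is non-meager in $X$, and consequently so is $V\cap A'=(A\cap V)\setminus V^\star$.

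The remaining---and only mildly delicate---step is to verify that $A'$ with the subspace topology is a Baire space. For this I would establish the elementary fact that a subset $N\subseteq A'$ nowhere dense in $A'$ is also nowhere dense in $X$: any non-empty open $U\subseteq X$ contained in $\overline N^X$ must meet $A'$ (otherwise $U$ itself would be a neighbourhood of a point of $\overline N^X$ disjoint from $N\subseteq A'$), after which $U\cap A'$ becomes a non-empty open subset of $\overline N^{A'}$, contradicting nowhere denseness of $N$ in $A'$. Hence meagerness transfers from $A'$ to $X$, and combined with the preceding paragraph this shows every non-empty relatively open subset of $A'$ is non-meager in $A'$, so $A'$ is Baire. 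The inclusion $A'\hookrightarrow A$ is then the desired witness.
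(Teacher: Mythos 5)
Your proof is correct and follows essentially the same route as the paper's: by contraposition, remove the maximal ``locally meager'' part of $A$ and observe that the remaining piece is a non-empty Baire subspace on which the inclusion into $A$ witnesses the failure of universal $\M$-meagerness. The only differences are cosmetic --- you delete the open set $V^\star$ directly and justify the Baire property via an explicit nowhere-density transfer lemma, whereas the paper deletes the closure of the relatively open meager kernel of $A$ and asserts the Baire property more tersely; your version is, if anything, slightly more carefully argued.
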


\begin{proof} The ``only if'' part follows from Proposition~\ref{p:elementary}. To prove the ``if'' part, assume that
the set $A$ is not meager in $X$. Let $U$ be the largest open subset of $A$ which belongs to the $\sigma$-ideal $\M$. The set $U$ is well-defined and is equal to the union of all open subsets of $A$, which are meager in $X$. Let $\bar U$ be the closure of $U$ in $A$. Since $\bar U\setminus U$ is nowhere dense in $A$ and in $X$, the set $\bar U$ is meager in $X$. Then the set $A\setminus \bar U$ is non-meager in $X$. Moreover, by the maximality of $U$, each non-empty open subset $V$ of $A\setminus \bar U$ is not meager in $X$ and hence non-meager in $A$. This implies that the space $A\setminus \bar U$ is Baire.

Observe that for the identity map $f:A\setminus \bar U\to A\subset X$ and any non-empty open set $V$ of $A\setminus \bar U$ the set $f(V)=V$ is not meager in $X$. Consequently, $A$ fails to be universally $\M$-meager.
\end{proof}

\begin{proposition} For a subset $A$ of a Polish space $X$ the following conditions are equivalent:
\begin{enumerate}
\item $A$ is universally meager;
\item $A$ is universally $[X]^{<\w}$-meager for the ideal $[X]^{<\w}$ of finite subsets of $X$;
\item $A$ is universally $[X]^{\le}$-meager for the ideal $[X]^{\le\w}$ of countable subsets of $X$.
\end{enumerate}
\end{proposition}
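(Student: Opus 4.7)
The plan is to establish the equivalences via the cycle $(2)\Rightarrow(3)\Rightarrow(1)\Rightarrow(2)$. The first arrow is free: since $[X]^{<\w}\subset[X]^{\le\w}$, Proposition~\ref{p:elementary}(2) immediately gives $(2)\Rightarrow(3)$.

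For $(3)\Rightarrow(1)$, I would argue by contraposition, imitating the construction from the proof of Proposition~\ref{p:uM-M}. Given a continuous nowhere constant map $f\colon Z\to X$ from a Polish space $Z$ with $B:=f^{-1}(A)$ non-meager in $Z$, let $\tilde U$ be the union of all open subsets $V\subset Z$ with $V\cap B$ meager in $Z$; by second countability of $Z$ the set $\tilde U\cap B$ is itself meager. The closed (hence Polish) set $W:=Z\setminus\tilde U$ then contains $B':=B\cap W$, which is non-meager in $Z$ and has the property that every non-empty relatively open piece of $B'$ is non-meager in $Z$ (by the maximality of $\tilde U$). A short Baire-category argument---in which nowhere dense closed subsets of the subspace $B'$ are pulled back to nowhere dense subsets of the Polish space $W$---then shows that $B'$ is itself a Baire space. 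Applying (3) to the continuous map $f|_{B'}\colon B'\to A$ yields a non-empty open $U\subset B'$ with $f(U)$ countable; decomposing $U$ into the closed subsets $U\cap f^{-1}(y)$, $y\in f(U)$, the Baire property of $U$ forces one of them to have non-empty interior in $U$. This produces an open $V_1\subset Z$ with $\emptyset\ne V_1\cap B'\subset f^{-1}(y)$, and the nowhere constant hypothesis makes $f^{-1}(y)$ nowhere dense in $Z$, contradicting the non-meagerness of $V_1\cap B'$.

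For $(1)\Rightarrow(2)$, suppose for contradiction that $g\colon Z'\to A$ is continuous with $Z'$ Baire separable metrizable and $g(U)$ infinite for every non-empty open $U\subset Z'$. I would first pass to the metric completion of $Z'$ (Polish) and then to the closure of $Z'$ inside it, obtaining a Polish space $Z_0$ in which $Z'$ is dense. Lavrentiev's extension theorem then extends $g$ to a continuous map $\tilde g\colon Z_1\to X$ on a $G_\delta$-subset $Z_1\subset Z_0$ containing $Z'$, and $Z_1$ is again Polish with $Z'$ dense in it. Because $Z'$ is dense in $Z_1$ and $g$ has infinite image on every relatively open subset of $Z'$, the extended map $\tilde g$ is nowhere constant, so condition (1) gives $\tilde g^{-1}(A)$ meager in $Z_1$. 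Since $Z'\subset\tilde g^{-1}(A)$, the dense set $Z'$ is meager in $Z_1$; but intersecting a nowhere dense subset of $Z_1$ with the dense set $Z'$ produces a nowhere dense subset of the subspace $Z'$, so $Z'$ would be meager in its own topology, contradicting the Baireness of $Z'$.

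The step I expect to be the main obstacle is the Baireness verification for the subspace $B'$ in the $(3)\Rightarrow(1)$ direction: one must carefully navigate between meagerness in the Polish ambient $Z$, in the closed subspace $W=Z\setminus\tilde U$, and in the (generally non-closed) subspace $B'\subset W$, and show that the maximality of $\tilde U$ is strong enough to propagate non-meagerness all the way down to the relative topology of $B'$.
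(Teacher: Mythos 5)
Your proposal is correct and follows essentially the same route as the paper: $(2)\Rightarrow(3)$ is immediate, $(1)\Rightarrow(2)$ goes through a Kuratowski/Lavrentiev extension of the map to a Polish superspace on which it is nowhere constant, and $(3)\Rightarrow(1)$ restricts to a Baire piece $B'$ of the non-meager preimage exactly as in the proof of Proposition~\ref{p:uM-M} and then runs the same category argument. The only (harmless) cosmetic differences are the contrapositive framing and the endgame of $(3)\Rightarrow(1)$, where you contradict the non-meagerness of $V_1\cap B'$ via the nowhere density of the closed fibre $f^{-1}(y)$ rather than concluding, as the paper does, that $f$ is constant on an open set.
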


\begin{proof} $(1)\Ra(2)$. Assume that $A$ is universally meager in $X$. To prove that $A$ is universally $[X]^{<\w}$-meager, fix any continuous map $f:Z\to A$ defined on a Baire separable metrizable space $Z$. By Kuratowski Theorem \cite[3.8]{Ke}, the map $f$ can be extended to a continuous map $\bar f:P\to X$ defined on some Polish space $P$ containing $Z$ as a dense subset. Since $A$ is universally meager and the preimage $\bar f^{-1}(A)\supset Z$ is not meager in $P$, the map $\bar f$ is constant on some non-empty open set $U\subset P$. Then $U\cap Z$ is a non-empty open set in $Z$ such that $f(U\cap Z)\subset\bar f(U)\in[X]^{<\w}$.
\smallskip

The implication $(2)\Ra(3)$ is trivial.
\smallskip

$(3)\Ra(1)$. Assume that $A$ is universally $[X]^{\le \w}$-meager in $X$. To prove that $A$ is universally meager in $X$, we should show that for any nowhere constant continuous map $f:P\to X$ defined on a Polish space $P$ the preimage $f^{-1}(A)$ is meager in $P$. To derive a contradiction, assume that $f^{-1}(A)$ is not meager in $P$. Then there is a non-empty open set $W\subset P$ such that the space $Z=W\cap f^{-1}(A)$ is Baire and dense in $W$. Consider the map $g=f|Z:Z\to A$. Since the set $A$ is universally $[X]^{\le\w}$-meager in $X$, there exists a non-empty open set $U\subset W$ such that the set $C=g(U\cap Z)$ is countable. Since the space $U\cap Z\subset \bigcup_{y\in C}U\cap g^{-1}(y)$ is Baire, for some $y\in C$ the preimage $g^{-1}(y)$ contains some non-empty open set $V\subset U$. The density of $V\cap Z$ in $Z$ guarantees that $f(V)=f(V\cap Z)=g(V\cap Z)=\{y\}$ is a singleton, which means that $f$ is not nowhere constant. But this contradicts the choice of $f$.
\end{proof}

\begin{proposition}\label{p:zak} A quasi-analytic set $A$ is a  topological space $X$ belongs to a $\sigma$-ideal $\I$ on $X$ if and only if $A$ is universally $\I$-meager in $X$.
\end{proposition}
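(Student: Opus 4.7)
The ``only if'' direction is immediate from Proposition~\ref{p:elementary}(1), so the work is in the ``if'' direction. My plan is to use the quasi-analyticity of $A$ to fix a continuous surjection $f:Z\to A$ from a hereditarily Baire separable metrizable space $Z$, and then to show that the set $W$ of points of $Z$ which have ``locally small'' image (in the sense of $\I$) is all of $Z$. Once this is established we obtain $A=f(Z)\in\I$ directly from the $\sigma$-additivity of $\I$.

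More precisely, I would define $W$ to be the union of all open subsets $V\subset Z$ with $f(V)\in\I$, equivalently the largest open set of $Z$ whose image lies in $\I$. Since $Z$ is separable metrizable, its topology is second countable, so by a routine argument $W$ itself is a countable union of such sets $V$; the $\sigma$-ideal property of $\I$ then gives $f(W)\in\I$. The goal is to rule out the possibility that the closed set $F=Z\setminus W$ is non-empty.

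Suppose, for contradiction, that $F\ne\emptyset$. Since $Z$ is hereditarily Baire and $F$ is closed in $Z$, the space $F$ is Baire, separable and metrizable. The restriction $g=f|_F:F\to A$ is continuous, so the hypothesis that $A$ is universally $\I$-meager yields a non-empty relatively open set $U\subset F$ with $g(U)\in\I$. Write $U=V\cap F$ for some open $V\subset Z$; then $V\cup W$ is open in $Z$, strictly larger than $W$ (because $U\subset V\setminus W$ is non-empty), and
\[
f(V\cup W)=f(V\cap W)\cup f(V\setminus W)\cup f(W)\subset f(W)\cup g(U)\in\I.
\]
This contradicts the maximality of $W$, so $F=\emptyset$, $Z=W$, and $A=f(Z)=f(W)\in\I$.

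The main subtlety I anticipate is step of lifting the relatively open set $U\subset F$ back to an open set in $Z$ whose image still lies in $\I$: one has to remember to re-add $W$ so that maximality is contradicted, and to use that $f(V\cap W)\subset f(W)\in\I$ rather than hoping $V$ itself has small image. Everything else is bookkeeping with second countability and the closure properties of the $\sigma$-ideal~$\I$, together with the defining property of hereditarily Baire spaces which is exactly what makes the closed ``leftover'' $F$ a legitimate test space for the universal $\I$-meagerness of $A$.
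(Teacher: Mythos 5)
Your proof is correct and follows essentially the same route as the paper's: both take the largest open subset of the hereditarily Baire preimage space with image in $\I$ (using second countability, equivalently hereditary Lindel\"ofness, to see its image lies in $\I$), and both derive a contradiction by applying universal $\I$-meagerness to the restriction of $f$ to the closed, hence Baire, complement. Your explicit ``re-add $W$'' step is exactly the bookkeeping the paper's appeal to maximality silently relies on.
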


\begin{proof} The ``only if'' part follows from Proposition~\ref{p:elementary}. To prove the ``if'' part, assume that $A$ is universally $\I$-meager in $X$. The space $A$, being quasi-analytic, is the image of a hereditarily Baire metrizable separable space $B$ under a continuous surjective map $f:B\to A$.  Let $\U$ be the family of all open sets $U\subset B$ such that $f(U)\in\I$. Since the space $B$ is hereditarily Lindel\"of, the union $U=\bigcup\U$ equals to the union $\bigcup\V$ of some countable subfamily $\V\subset\U$, which implies that the set $f(U)\in\I$ (we recall that $\I$ is a $\sigma$-ideal).

If $A\notin\I$, then $U\ne A$ and the closed subset $B'=B\setminus U$ of $B$ is not empty. By the maximality of $U$, for any non-empty open set $V\subset B'$ the set $f(V)$ does not belong to the ideal $\I$. Since the space $B$ is hereditarily Baire, the closed subspace $B'$ is Baire. Then the map $f|B':B'\to A$ witnesses that $A$ is not universally $\I$-meager. This contradiction shows that $A\in\I$.
\end{proof}

By \cite{Zak1}, uncountable universally meager sets exist in ZFC. On the other hand,  Miller \cite{Miller} constructed a model of ZFC in which every perfectly (and universally) meager subset $A\subset\IR$ has cardinality $|A|\le\w_1<\w_1=\mathfrak c$. We recall \cite{Mil84} that a set $A\subset \IR$ is called {\em perfectly meager} if for any closed subset $P\subset\IR$ without isolated points the intersection $A\cap P$ is meager in $P$.

By Proposition~\ref{p:uM-M}, each universally $\M$-meager subset of a Polish space $X$ is meager.

\begin{problem} Is it consistent that each $\E$-meager subset of a compact topological group $G$ belongs to the $\sigma$-ideal $\E$ generated by closed Haar null sets in $G$?
\end{problem}

\begin{problem} Is it consistent that each subset $A\subset\IR$  of cardinality $|A|=\mathfrak c$ is quasi-analytic?
\end{problem}

\section{The closed Steinhaus properties of ideals on topological  groups}\label{s1}

In this section we shall introduce various closed Steinhaus properties of ideals on topological groups and shall prove that such ideals contains all meager quasi-analytic subgroups.

An ideal $\I$ on a group $G$ is called {\em invariant} if for any $A\in\I$ and $x,y\in G$ we get $xAy\in\I$; $\I$ is {\em symmetric} if for any $A\in\I$ the set $A^{-1}=\{a^{-1}:a\in A\}$ belongs to $\I$.

For subsets $A_1,\dots,A_n$ of a semigroup $G$ let $A_1\cdots A_n=\{a_1\cdots a_n:a_1\in A_1,\dots,a_n\in A_n\}$ be the product of the sets $A_1,\dots,A_n$ in $G$. If all sets $A_i$ are equal to a fixed subset $A\subset G$, then the product $A_1\cdots A_n$ will be denoted by $A^{\cdot n}$. For a subset $A$ of a group $G$ we put $A^\pm=A\cup A^{-1}$.

In the following definition we introduce two central notions considered in this paper.

\begin{definition}\label{d:pmn} An ideal $\I$ on a topological group $G$ is defined to have the {\em closed $n$-Steinhaus property} \textup{(}resp. the {\em closed $\pm n$-Steinhaus property}\textup{)} if  for any closed subsets $A_1,\dots,A_n\notin\I$ of $G$ their product $A_1\cdots A_n$ \textup{(}resp. $A_1^\pm\cdots A_n^\pm$\textup{)} is not nowhere dense in $G$.
\end{definition}

Observe that each ideal with the closed $n$-Steinhaus property has the closed $\pm n$-Steinhaus property.

The closed $n$-Steinhaus property is a partial case of the closed $\e$-Steinhaus property defined as follows.

\begin{definition} Let $\e=(\e_1,\dots,\e_n)\in\{-1,1\}^n$ for some $n\in\IN$. An ideal $\I$ on a topological group $G$ is defined to have the {\em closed $\e$-Steinhaus property} if  for any closed subsets $A_1,\dots,A_n\notin\I$ of $G$ their $\e$-product $A_1^{\e_1}\cdots A_n^{\e_n}=\{x_1^{\e_1}\cdots x_n^{\e_n}:x_1\in A_1,\dots,x_n\in A_n\}$ is not nowhere dense in $G$.
\end{definition}Observe that the closed $n$-Steinhaus property is equal to the closed $\e$-Steinhaus property for the unique sequence $\e\in\{1\}^n$ consisting of $n$ units.

The following simple proposition follows immediately from the definitions.

\begin{proposition}\label{p:sym} Let $\I$ be an ideal on a topological group and $\e\in\{-1,1\}^n$ for $n\in\IN$.
\begin{enumerate}
\item If $\I$ has the closed $\e$-Steinhaus property, then $\I$ has the closed $\pm n$-Steinhaus property;
\item If $\I$ is symmetric, then $\I$ has the closed $\e$-Steinhaus property if and only if $\I$ has the closed $n$-Steinhaus property.
\end{enumerate}
\end{proposition}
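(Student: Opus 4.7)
The proposition has two parts, and both reduce to two small observations about how the inverse map interacts with the sets $A_i^\pm$ and with the closed sets in a symmetric ideal. The plan is to give a direct, essentially one-line argument for each implication, after setting up the right auxiliary closed sets.

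For part (1), I would start with closed sets $A_1,\dots,A_n\notin\I$ and form $B_i=A_i^\pm=A_i\cup A_i^{-1}$. Each $B_i$ is closed (as the union of $A_i$ and its image under the self-homeomorphism $x\mapsto x^{-1}$) and satisfies $B_i\supset A_i$, hence $B_i\notin\I$ since $\I$ is closed under taking subsets. The key observation is the identity $(B_i)^{-1}=B_i$, which gives $(B_i)^{\e_i}=B_i$ for every $\e_i\in\{-1,1\}$. Applying the closed $\e$-Steinhaus property to $B_1,\dots,B_n$ then yields that
\[
(B_1)^{\e_1}\cdots (B_n)^{\e_n}\;=\;A_1^\pm\cdots A_n^\pm
\]
is not nowhere dense in $G$, which is exactly the closed $\pm n$-Steinhaus property.

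For part (2), assume $\I$ is symmetric. The forward direction (closed $n$-Steinhaus implies closed $\e$-Steinhaus) is immediate: if $A_1,\dots,A_n\notin\I$ are closed, then the sets $B_i:=A_i^{\e_i}$ are still closed (inverse is a homeomorphism) and still not in $\I$ (symmetry of $\I$), so $B_1\cdots B_n=A_1^{\e_1}\cdots A_n^{\e_n}$ is not nowhere dense. For the converse direction, I would apply the same substitution in reverse: given closed $A_1,\dots,A_n\notin\I$, set $B_i:=A_i^{\e_i}$, which again is closed and outside $\I$ by symmetry, and use the closed $\e$-Steinhaus property to conclude that $B_1^{\e_1}\cdots B_n^{\e_n}$ is not nowhere dense; but $B_i^{\e_i}=A_i^{\e_i^2}=A_i$, so this product equals $A_1\cdots A_n$, establishing the closed $n$-Steinhaus property.

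There is no real obstacle here; the only thing to be careful about is to justify that each replacement set $B_i$ is closed and still outside $\I$. For $(1)$ this uses only that $A_i\subset A_i^\pm$ together with $\I$ being an ideal; for $(2)$ it uses the symmetry hypothesis on $\I$ in an essential way. The whole proof is a short bookkeeping argument built around the two identities $(A^\pm)^{-1}=A^\pm$ and $(\e_i)^2=1$.
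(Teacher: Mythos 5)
Your proof is correct, and it carries out exactly the routine verification the paper has in mind: the paper states this proposition with no proof at all (``follows immediately from the definitions''), and your bookkeeping via the identities $(A^\pm)^{-1}=A^\pm$ and $\e_i^2=1$, together with the observation that each replacement set $B_i$ is closed and outside $\I$ (using symmetry of $\I$ where needed), is precisely the omitted argument.
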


It is clear that the ideal $\M_G$ of meager subsets in a topological group $G$ has the closed $1$-Steinhaus property. The classical Steinhaus theorem \cite{S20} (see also \cite{Stromb}) implies that for a locally compact group $G$ the ideal $\N_G$ of Haar null sets and the $\sigma$-ideal $\mathcal E_G$ generated by closed Haar null subsets in $G$ both have the closed $2$-Steinhaus property.


\begin{theorem}\label{t:qua-pmn} Assume that a $\sigma$-ideal $\I$ on a topological group $G$ has the closed $\pm n$-Steinhaus property for some $n\in\IN$. If subsets $A_1,\dots,A_n\subset G$ are not universally $\I$-meager in $G$, then the set $A_1^\pm\cdots A_n^{\pm}$ is not meager in $G$.
\end{theorem}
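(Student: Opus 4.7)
The plan is to argue by contradiction: assume $A_1^\pm \cdots A_n^\pm \subset M := \bigcup_{k \in \IN} F_k$ with each $F_k$ closed and nowhere dense in $G$. By Definition~\ref{d:uIm}, for each $i$ I fix a Baire separable metrizable space $Z_i$ and a continuous map $f_i: Z_i \to A_i$ such that $f_i(U) \notin \I$ for every non-empty open $U \subset Z_i$. The whole argument aims to produce non-empty open sets $U_i \subset Z_i$ and, for each sign vector $\e \in \{-1,1\}^n$, an index $k_\e$ with $f_1(U_1)^{\e_1} \cdots f_n(U_n)^{\e_n} \subset F_{k_\e}$. Once this is achieved, put $C_i := \overline{f_i(U_i)}$: each $C_i$ is closed, contains $f_i(U_i)$ hence lies outside $\I$, and continuity of the group operations together with closedness of $F_{k_\e}$ yields $C_1^{\e_1} \cdots C_n^{\e_n} \subset F_{k_\e}$. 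Therefore $C_1^\pm \cdots C_n^\pm = \bigcup_\e C_1^{\e_1} \cdots C_n^{\e_n}$ is a finite union of closed nowhere dense sets, hence nowhere dense, contradicting the closed $\pm n$-Steinhaus property of $\I$ applied to $C_1, \ldots, C_n$.

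To produce the $U_i$'s I plan to prove by induction on $n$ the following refinement lemma: for each $\e \in \{-1,1\}^n$ and each choice of non-empty open $V_i \subset Z_i$ with $f_1(V_1)^{\e_1} \cdots f_n(V_n)^{\e_n} \subset M$, there exist non-empty open $U_i \subset V_i$ and $k \in \IN$ such that $f_1(U_1)^{\e_1} \cdots f_n(U_n)^{\e_n} \subset F_k$. The base $n=1$ is immediate: $V_1$ is covered by the countably many closed sets $V_1 \cap f_1^{-1}(F_k)$ (if $\e_1=1$) or $V_1 \cap f_1^{-1}(F_k^{-1})$ (if $\e_1=-1$), and Baireness of the open subspace $V_1 \subset Z_1$ forces one of them to contain a non-empty open $U_1$. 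For the inductive step, fix $z_n \in V_n$ and apply the $(n-1)$-case to the meager cover $\{F_k \cdot f_n(z_n)^{-\e_n}\}_k$ of $f_1(V_1)^{\e_1} \cdots f_{n-1}(V_{n-1})^{\e_{n-1}}$, choosing the resulting $U_i$ ($i<n$) from fixed countable bases $\mathcal{B}_i$ of $V_i$. Then $V_n$ is the countable union, over $(U_1, \ldots, U_{n-1}, k) \in \mathcal{B}_1 \times \cdots \times \mathcal{B}_{n-1} \times \IN$, of the sets
$$T_{U_1, \ldots, U_{n-1}, k} := \bigl\{ z \in V_n : f_1(U_1)^{\e_1} \cdots f_{n-1}(U_{n-1})^{\e_{n-1}} \cdot f_n(z)^{\e_n} \subset F_k \bigr\},$$
each of which is closed in $V_n$ as the preimage under the continuous map $z \mapsto f_n(z)^{\e_n}$ of the closed set $\bigcap_{x \in P} x^{-1} F_k$, where $P := f_1(U_1)^{\e_1} \cdots f_{n-1}(U_{n-1})^{\e_{n-1}}$. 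Baireness of $V_n$ then delivers a piece with non-empty interior $U_n$, completing the induction.

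To enforce the conclusion for all $\e$ simultaneously, I would enumerate $\{-1,1\}^n$ and iterate the lemma $2^n$ times, starting from $V_i^{(0)} := Z_i$ and replacing $V_i^{(j)}$ by the smaller $U_i^{(j+1)}$ at stage $j+1$. The final $U_i := V_i^{(2^n)}$ work for every $\e$, since shrinking the $V_i$'s only shrinks each $\e$-product on the left while preserving the inclusions into the $F_k$'s. The principal delicacy of the proof is to avoid any appeal to Baireness of the potentially non-Baire product $Z_1 \times \cdots \times Z_n$; the coordinate-wise induction above uses only Baireness of each $Z_i$ and of its open subspaces, together with a countable-base trick that replaces the otherwise uncountable quantification over sections by a countable union of closed subsets of $V_n$.
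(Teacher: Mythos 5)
Your proof is correct, and its global strategy coincides with the paper's: assume the product is covered by closed nowhere dense sets $F_k$, use the failure of universal $\I$-meagerness to get witnessing maps $f_i:Z_i\to A_i$, locate open sets $U_i\subset Z_i$ whose images multiply into a single $F_k$ for each sign pattern, pass to closures, and contradict the closed $\pm n$-Steinhaus property. The difference lies in how the open boxes are produced. The paper forms the product $B=\prod_{i=1}^n B_i$, observes that it is Baire by \cite[8.44]{Ke} (a finite, indeed countable, product of second countable Baire spaces is Baire), and applies the Baire category argument once to the closed sets $E_k=\{(x_1,\dots,x_n):\forall\e\; f_1(x_1)^{\e_1}\cdots f_n(x_n)^{\e_n}\in F_k\}$, which handles all $2^n$ sign vectors and yields a single $k$ in one stroke. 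You instead avoid the product entirely via a coordinate-wise induction: a refinement lemma proved by slicing along the last coordinate, writing $V_n$ as a countable union of closed sets indexed by basic open boxes in the first $n-1$ coordinates, and then a $2^n$-fold iteration over sign vectors. Your worry about the product being non-Baire is actually unfounded in this setting --- the $Z_i$ are separable metrizable by Definition~\ref{d:uIm}, so Oxtoby's theorem applies and the paper's shortcut is legitimate; note also that your countable-base trick itself relies on second countability of the $Z_i$, so no extra generality is gained. What your route buys is self-containedness (no appeal to the product-of-Baire-spaces theorem), at the cost of a longer induction and of obtaining a separate index $k_\e$ per sign vector, which is harmless since a finite union of nowhere dense sets is nowhere dense.
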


\begin{proof} To derive a contradiction, assume that the product $A_1^\pm\cdots A_n^\pm$ is meager in $G$ and hence is contained in the union $\bigcup_{k\in\w}F_k$ of some increasing sequence $(F_k)_{k\in\w}$ of closed nowhere dense sets in $G$.

By our assumption, each set $A_i$ is not universally $\I$-meager in $X$. Then we can find a continuous map $f_i:B_i\to A_i$ defined on a Baire  separable metrizable space $B_i$ such that $f(V_i)\notin \I$ for every non-empty open subset $V_i\subset B_i$.  By \cite[8.44]{Ke}, the product $B=\prod_{i=1}^n B_i$ is a Baire space.

By the continuity of the group operations, for every $k\in\IN$ the set $$E_k=\{(x_1,\dots,x_n)\in \prod_{i=1}^nB_i:\forall (\e_1,\dots,\e_n)\in\{-1,1\}^n\;f_1(x_1)^{\e_1}\cdots f_n(x_n)^{\e_k}\in F_k\}$$ is closed in the Baire space $B=\prod_{i=1}^n B_i$. Since $B\subset \bigcup_{k\in\w}E_k$, for some $k\in\w$ the set $E_k$ contains a non-empty open set $V$ of $B$. Find non-empty open sets $V_1\subset B_1,\dots,V_n\subset B_n$ such that $\prod_{i=1}^nV_i\subset V\subset E_k$ and observe that for their images $C_i=f_i(V_i)\subset G$ we get $C_1^\pm\cdots C_n^\pm\subset F_k$ and by the continuity of the group operations, the product $\bar C_1^\pm\cdots \bar C_n^\pm\subset \bar F_k=F_k$ is nowhere dense.
The choice of the sets $B_1,\dots,B_n$ guarantees that $C_i\notin\I$ and hence $\bar C_i\notin\I$ for all $i\le n$. But this contradicts the closed $\pm n$-Steinhaus property of the ideal $\I$.
\end{proof}

\begin{corollary}\label{c:pmn-uIm} If a $\sigma$-ideal $\I$ on a topological group $X$ has the closed $\pm n$-Steinhaus property for some $n\in\w$, then each meager subgroup $H\subset G$ is universally $\I$-meager.
\end{corollary}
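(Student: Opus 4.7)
The plan is to deduce the corollary directly from Theorem~\ref{t:qua-pmn} by contradiction, exploiting the algebraic fact that a subgroup is absorbed by both inversion and multiplication. Suppose $H\subset G$ is a meager subgroup which is \emph{not} universally $\I$-meager. I would then apply Theorem~\ref{t:qua-pmn} to the constant choice $A_1=\cdots=A_n=H$.

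Since $H$ is a subgroup we have $H^{-1}=H$, hence $H^{\pm}=H\cup H^{-1}=H$; and again because $H$ is closed under multiplication, the $n$-fold product satisfies $H^{\pm}\cdots H^{\pm}=H\cdot H\cdots H\subset H$. By hypothesis $H$ is meager in $G$, so $H^{\pm}\cdots H^{\pm}$ is meager in $G$ as well. On the other hand, Theorem~\ref{t:qua-pmn} (with each $A_i=H$ non universally $\I$-meager) yields that $H^{\pm}\cdots H^{\pm}$ is \emph{not} meager in $G$. This contradiction forces $H$ to be universally $\I$-meager, completing the proof.

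There is essentially no obstacle here; the only thing one must observe is that the hypothesis of Theorem~\ref{t:qua-pmn} requires the $A_i$ to fail universal $\I$-meagerness (not to lie outside $\I$), which is exactly what the negation of the conclusion supplies. Thus the corollary is a one-line consequence of the theorem together with the idempotency of subgroups under $\cdot$ and $(\cdot)^{-1}$.
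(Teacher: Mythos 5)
Your argument is correct and is exactly the intended derivation: the paper states this corollary without proof precisely because it follows from Theorem~\ref{t:qua-pmn} by taking $A_1=\cdots=A_n=H$ and noting that $H^{\pm}\cdots H^{\pm}=H$ is meager, so not all the $A_i$ can fail to be universally $\I$-meager. Your observation that the theorem's hypothesis concerns failure of universal $\I$-meagerness (rather than membership in $\I$) is the right point to check, and you have checked it.
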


By analogy we can prove the following modification of Theorem~\ref{t:qua-pmn}.

\begin{theorem}\label{t:qua-e} Assume that a $\sigma$-ideal $\I$ on a topological group $G$ has the $\e$-Steinhaus property for some $\e=(\e_1,\dots,\e_n)\in\{-1,1\}^n$, $n\in\IN$.  If subsets $A_1,\dots,A_n\subset G$ are not universally $\I$-meager in $G$, then the set $A_1^{\e_1}\cdots A_n^{\e_n}$ is not meager in $G$.
\end{theorem}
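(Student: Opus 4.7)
The plan is to mimic the proof of Theorem~\ref{t:qua-pmn} almost verbatim, simply replacing the symmetrized product $A_1^\pm\cdots A_n^\pm$ by the single-sign $\e$-product $A_1^{\e_1}\cdots A_n^{\e_n}$. I would argue by contradiction: suppose $A_1^{\e_1}\cdots A_n^{\e_n}$ is meager in $G$, so it is contained in an increasing union $\bigcup_{k\in\w}F_k$ of closed nowhere dense subsets of $G$.

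Next, using that each $A_i$ is not universally $\I$-meager, I would pick for every $i\le n$ a continuous map $f_i:B_i\to A_i$ from a Baire separable metrizable space $B_i$ such that $f_i(V_i)\notin\I$ for every non-empty open $V_i\subset B_i$. The product $B=\prod_{i=1}^n B_i$ is again Baire by \cite[8.44]{Ke}. For each $k\in\w$ consider the set
$$E_k=\Bigl\{(x_1,\dots,x_n)\in B : f_1(x_1)^{\e_1}\cdots f_n(x_n)^{\e_n}\in F_k\Bigr\},$$
which is closed in $B$ by the joint continuity of the multiplication and inversion in $G$, and notice that $B=\bigcup_{k\in\w}E_k$. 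The Baire property of $B$ then yields some $k$ for which $E_k$ has non-empty interior, and I would shrink this interior to a product box $V_1\times\cdots\times V_n$ with each $V_i$ a non-empty open subset of $B_i$.

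Setting $C_i=f_i(V_i)\subset A_i\subset G$, I obtain $C_1^{\e_1}\cdots C_n^{\e_n}\subset F_k$, and by continuity of the group operations $\bar C_1^{\e_1}\cdots\bar C_n^{\e_n}\subset\bar F_k=F_k$ is nowhere dense in $G$. The choice of the maps $f_i$ guarantees $C_i\notin\I$, hence $\bar C_i\notin\I$ for every $i\le n$, which contradicts the closed $\e$-Steinhaus property of $\I$ applied to the closed sets $\bar C_1,\dots,\bar C_n$.

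The argument is essentially a duplicate of the proof of Theorem~\ref{t:qua-pmn}; the only point that needs verification is that the set $E_k$ remains closed once the symmetrization is dropped, which is immediate because each map $x\mapsto x^{\e_i}$ and the $n$-fold multiplication are continuous. I do not anticipate a genuine obstacle.
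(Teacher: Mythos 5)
Your proposal is exactly the argument the paper intends: the paper states Theorem~\ref{t:qua-e} with the remark that it is proved ``by analogy'' with Theorem~\ref{t:qua-pmn}, and your adaptation --- dropping the symmetrization, keeping the Baire category argument on $B=\prod_{i=1}^n B_i$, and applying the closed $\e$-Steinhaus property to the closures $\bar C_1,\dots,\bar C_n$ --- is precisely that analogy carried out correctly. No issues.
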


\begin{corollary}\label{c:n-uIm} If a $\sigma$-ideal $\I$ on a topological group $X$ has the closed $n$-Steinhaus property for some $n\in\w$, then each meager subsemigroup $H\subset G$ is universally $\I$-meager.
\end{corollary}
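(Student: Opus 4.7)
The plan is to deduce this corollary almost immediately from Theorem \ref{t:qua-e}, in the same way that Corollary \ref{c:pmn-uIm} is a one-line consequence of Theorem \ref{t:qua-pmn}. Recall that, as observed right after the definition of the closed $\e$-Steinhaus property, the closed $n$-Steinhaus property coincides with the closed $\e$-Steinhaus property for the tuple $\e=(1,1,\dots,1)\in\{-1,1\}^n$. So the hypothesis on $\I$ feeds directly into Theorem~\ref{t:qua-e} with this particular $\e$.

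Concretely, I would argue by contradiction. Let $H\subset G$ be a meager subsemigroup, and suppose that $H$ is not universally $\I$-meager in $G$. Apply Theorem~\ref{t:qua-e} to the constant sequence $A_1=A_2=\cdots=A_n=H$ and $\e=(1,\dots,1)$: since none of the $A_i$ is universally $\I$-meager, the $\e$-product $A_1^{\e_1}\cdots A_n^{\e_n}=H^{\cdot n}$ is not meager in $G$.

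On the other hand, $H$ being a subsemigroup of $G$ gives the containment $H^{\cdot n}\subseteq H$, so $H^{\cdot n}$ inherits meagerness from $H$. This contradicts the conclusion of Theorem~\ref{t:qua-e} and therefore forces $H$ to be universally $\I$-meager.

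There is essentially no obstacle here: the whole content has been packaged into Theorem~\ref{t:qua-e}, and the role of the subsemigroup hypothesis is exactly to guarantee $H^{\cdot n}\subseteq H$ (whereas a subgroup hypothesis, as in Corollary~\ref{c:pmn-uIm}, would additionally give $H^\pm=H$ so that the symmetric product collapses). The only point worth double-checking is that the $\e$-Steinhaus property invoked matches the closed $n$-Steinhaus property given in the hypothesis, which is the remark made immediately after the definition of the closed $\e$-Steinhaus property.
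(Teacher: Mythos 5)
Your proof is correct and is exactly the derivation the paper intends: the corollary is stated without proof as an immediate consequence of Theorem~\ref{t:qua-e}, obtained by taking $A_1=\dots=A_n=H$ and $\e=(1,\dots,1)$ and using $H^{\cdot n}\subseteq H$ to contradict the meagerness of $H$. Nothing to add.
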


Corollaries~\ref{c:pmn-uIm} and \ref{c:n-uIm} will be combined with the following classical result of Piccard \cite{Piccard} and Pettis \cite{Pettis} on sums of non-meager sets with the Baire property in topological groups. We recall that a subset $A$ of a topological space $X$ has the {\em Baire property} in $X$ if for some open set $U\subset X$ the symmetric difference $(U\setminus A)\cup(A\setminus U)$ is meager in $X$. By Nikod\'ym Theorem \cite[4.9]{Todor}, each analytic set in a regular topological space has the Baire property.

\begin{theorem}[Piccard-Pettis]\label{t:PP} For any non-meager sets $A,B$ with the Baire property in a topological group $G$, the set $AB$ has non-empty interior in $G$ and the set $AA^{-1}$ is a neighborhood of the unit in $G$.
\end{theorem}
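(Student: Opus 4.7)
The plan is to exploit the Baire property of $A$ and $B$ to approximate them by open sets modulo meager sets, and then use continuity of multiplication to find large ``matching pieces''. Concretely, since $A$ and $B$ have the Baire property and are non-meager, there exist non-empty open sets $U,V\subset G$ and meager sets $M_A,M_B\subset G$ such that $A\triangle U\subset M_A$ and $B\triangle V\subset M_B$ (the sets $U,V$ are non-empty because otherwise $A,B$ would be meager). The whole argument will now use the elementary fact that in a topological group, the left/right translate of a meager set is meager, together with the observation that a non-empty open subset of $G$ is never meager.

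For the assertion that $AB$ has non-empty interior, I would fix points $u_0\in U$ and $v_0\in V$ and set $g_0=u_0v_0$. By continuity of the map $g\mapsto gv_0^{-1}$, there is an open neighborhood $W$ of $g_0$ such that $gv_0^{-1}\in U$ for every $g\in W$; then the open set $gV^{-1}\cap U$ contains $gv_0^{-1}$ and is therefore non-empty and non-meager. Write $gB^{-1}=(gV^{-1})\triangle(gM_B^{-1})$ where $gM_B^{-1}$ is meager, and $A=U\triangle M_A$. Then
\[
(gB^{-1})\cap A\supset (gV^{-1}\cap U)\setminus (M_A\cup gM_B^{-1}),
\]
which is non-meager and hence non-empty. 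Picking any element of this intersection produces $a\in A$ and $b\in B$ with $ab=g$, so $W\subset AB$.

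For the assertion that $AA^{-1}$ is a neighborhood of the unit, I would apply the same idea with $B=A$ (so $V=U$ and $M_B=M_A$) but centered at the identity instead. Choose $u_0\in U$; by continuity of left multiplication there is an open neighborhood $W$ of the unit with $W u_0\subset U$, so for $g\in W$ the open set $gU\cap U$ contains $gu_0$ and is non-empty. Then
\[
gA\cap A\supset (gU\cap U)\setminus (M_A\cup gM_A)
\]
is non-meager and non-empty, giving $b\in A$ with $gb\in A$, i.e.\ $g=(gb)b^{-1}\in AA^{-1}$; thus $W\subset AA^{-1}$.

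The only potential obstacle is bookkeeping with the symmetric differences and making sure the translated meager sets really are meager in $G$, but in a topological group left/right translation is a homeomorphism, so this is automatic. No further tools beyond continuity of the group operations and the definition of the Baire property are needed; in particular neither separability nor local compactness of $G$ plays any role.
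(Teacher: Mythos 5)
Your argument is correct and is essentially the classical Pettis argument; note that the paper itself does not prove Theorem~\ref{t:PP} at all but simply cites Piccard and Pettis, so there is no in-paper proof to compare against. The decomposition $A=U\triangle M_A$, $B=V\triangle M_B$ with $U,V$ open non-empty, the translation of the problem into showing that $gB^{-1}\cap A$ (respectively $gA\cap A$) is non-empty for all $g$ in a suitable open set, and the estimate $(gB^{-1})\cap A\supset (gV^{-1}\cap U)\setminus(M_A\cup gM_B^{-1})$ are exactly the standard route, and the bookkeeping checks out. The only point you gloss over is the claim that a non-empty open subset of $G$ is never meager: this is false for an arbitrary topological group (e.g.\ $\mathbb Q$), but it does hold under the hypotheses of the theorem, since the existence of a non-meager subset forces $G$ to be non-meager in itself, and then the Banach category theorem (a union of open meager sets is meager, applied to the translates of a putative meager non-empty open set) shows every non-empty open set is non-meager. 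You should insert one sentence to this effect; with that, the proof is complete and, as you observe, needs neither separability nor local compactness.
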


Corollary~\ref{c:pmn-uIm} will be used to give the following answer to Problem~\ref{probmain}.

\begin{theorem}\label{t:subgr-pmn} If a $\sigma$-ideal $\I$ on a topological group $G$ has the closed $\pm n$-Steinhaus property for some $n\in\IN$, then $\I$ contains every meager quasi-analytic subgroup of $G$ and every non-open analytic subgroup of $G$.
\end{theorem}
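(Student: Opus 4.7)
The proof will proceed in two steps, with the meager quasi-analytic case forming the core and the non-open analytic case reducing to it.

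First I would handle the meager quasi-analytic part. Let $H \subset G$ be a meager quasi-analytic subgroup. Since $H$ is meager and $\I$ has the closed $\pm n$-Steinhaus property, Corollary~\ref{c:pmn-uIm} applies and tells us $H$ is universally $\I$-meager in $G$. On the other hand, $H$ is quasi-analytic by hypothesis, so Proposition~\ref{p:zak} gives the equivalence between universal $\I$-meagerness and membership in $\I$ for quasi-analytic sets. Combining these, $H \in \I$, as desired. This step is essentially a direct application of the two key tools already developed; no real obstacle arises.

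The non-open analytic case reduces to the first one by showing that any non-open analytic subgroup is automatically meager. Let $H \subset G$ be a non-open analytic subgroup. By the Nikod\'ym theorem (Theorem~\ref{t:PP} is preceded by this observation), $H$ has the Baire property in $G$. Suppose toward a contradiction that $H$ is not meager. Then by the Piccard-Pettis theorem (Theorem~\ref{t:PP}), $HH^{-1}$ is a neighborhood of the unit in $G$; but $H$ is a subgroup, so $HH^{-1} = H$, which makes $H$ a subgroup containing a neighborhood of the identity, hence open in $G$. This contradicts our standing assumption that $H$ is non-open, so $H$ must be meager.

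Once meagerness is established, I would invoke the fact noted at the beginning of Section~\ref{s0} that every analytic space is quasi-analytic; thus $H$ is also quasi-analytic. Applying the first case to $H$ yields $H \in \I$, completing the proof.

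The only place where one has to be mildly careful is the Piccard--Pettis step: one must remember that for a subgroup, the product $HH^{-1}$ coincides with $H$ itself, so having a neighborhood of $e$ inside $HH^{-1}$ immediately forces openness. Aside from this small observation, both cases are straightforward assemblies of Corollary~\ref{c:pmn-uIm}, Proposition~\ref{p:zak}, Nikod\'ym's theorem, and Theorem~\ref{t:PP}, so I do not expect any substantial obstacle.
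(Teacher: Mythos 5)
Your proposal is correct and uses exactly the same ingredients as the paper's proof (Corollary~\ref{c:pmn-uIm} plus Proposition~\ref{p:zak} for the quasi-analytic case, and Nikod\'ym plus Piccard--Pettis for the analytic case); you merely organize the analytic case as a direct reduction ``non-open analytic $\Rightarrow$ meager $\Rightarrow$ quasi-analytic case applies,'' whereas the paper argues by contraposition that $H\notin\I$ forces $H$ to be open. The observation that $HH^{-1}=H$ turns the Piccard--Pettis conclusion into openness is the same point the paper makes via $H\cdot H\subset H$ having non-empty interior.
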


\begin{proof} Assume that some quasi-analytic subgroup $H$ of $G$ does not belong to the ideal $\I$. By Proposition~\ref{p:zak}, $H$ is not universally $\I$-meager and by Corollary~\ref{c:pmn-uIm}, $H$ is not meager. If the subgroup $H$ is analytic, then $H$ has the Baire property by Nikod\'ym Theorem \cite[4.9]{Todor} and the set $H\cdot H\subset H$ has non-empty interior in $G$ by Piccard-Pettis Theorem~\ref{t:PP}.
\end{proof}

By analogy we can derive the following theorem from Corollary~\ref{c:n-uIm}.

\begin{theorem}\label{t:subsemi} If a $\sigma$-ideal $\I$ on a topological group $G$ has the closed $n$-Steinhaus property for some $n\in\IN$, then $\I$ contains every meager quasi-analytic subsemigroup of $G$ and every analytic subsemigroup with empty interior in $G$.
\end{theorem}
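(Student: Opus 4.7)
The plan is to run exactly the proof of Theorem~\ref{t:subgr-pmn} with its ``$\pm$''-ingredients replaced by their non-symmetric counterparts: Corollary~\ref{c:pmn-uIm} becomes Corollary~\ref{c:n-uIm}, and the inclusion $H\cdot H^{-1}\subset H$ becomes $H\cdot H\subset H$, which is precisely what makes $H$ a subsemigroup rather than a subgroup.

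First I would handle the quasi-analytic part. Suppose toward a contradiction that some quasi-analytic subsemigroup $H\subset G$ does not belong to $\I$. Since $\I$ is a $\sigma$-ideal and $H$ is quasi-analytic, Proposition~\ref{p:zak} forces $H$ to be not universally $\I$-meager. But by Corollary~\ref{c:n-uIm}, the closed $n$-Steinhaus property of $\I$ makes every meager subsemigroup of $G$ universally $\I$-meager. Contrapositively, $H$ cannot be meager, establishing the first half of the statement.

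Next I would handle the analytic part. Assume in addition that $H$ is analytic. Nikod\'ym's theorem, invoked in the excerpt just before Theorem~\ref{t:PP}, then gives that $H$ has the Baire property in $G$, and the previous paragraph shows that $H$ is non-meager. The Piccard--Pettis Theorem~\ref{t:PP} therefore yields that $H\cdot H$ has non-empty interior in $G$; since $H\cdot H\subset H$ by the subsemigroup property, $H$ itself has non-empty interior, contradicting the hypothesis of the second assertion.

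There is no real obstacle here: the whole argument is essentially a transcription of the proof of Theorem~\ref{t:subgr-pmn}. The only point where one must be slightly careful is checking that Corollary~\ref{c:n-uIm} is genuinely applicable, which amounts to the trivial remark that $H^{\cdot n}\subset H$ for every subsemigroup $H$ of $G$, so that if $H$ were meager, then $H^{\cdot n}$ would be meager and Theorem~\ref{t:qua-e} (applied with $\e=(1,\dots,1)$) would force $H$ to be universally $\I$-meager.
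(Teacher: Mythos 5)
Your proof is correct and follows exactly the route the paper intends: the paper derives Theorem~\ref{t:subsemi} ``by analogy'' from Corollary~\ref{c:n-uIm}, replacing the $\pm$-ingredients of the proof of Theorem~\ref{t:subgr-pmn} by their one-sided counterparts and using $H\cdot H\subset H$ together with Piccard--Pettis for the analytic case, just as you do. Your closing remark correctly identifies why Corollary~\ref{c:n-uIm} applies (via Theorem~\ref{t:qua-e} with $\e=(1,\dots,1)$ and $H^{\cdot n}\subset H$), so nothing is missing.
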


We shall say that a $\sigma$-ideal $\I$ on a topological group $G$ has the {\em closed $\pm \w$-Steinhaus property} if for any closed sets $A_n\notin\I$, $n\in\IN$, in $G$ the set $A_1^{\pm}\cdots A_m^{\pm}$ is not nowhere dense in $G$ for some $m\in\IN$. It is clear that each ideal $\I$ with the closed $\pm n$-Steinhaus property for some $n\in\w$ has the closed $\pm \w$-Steinhaus property.

\begin{problem} Does each non-open analytic subgroup $H$ of a Polish group $G$ belong to every invariant $\sigma$-ideal with the closed $\pm \w$-Steinhaus property?
\end{problem}

\section{Constructing ideals possessing various closed Steinhaus properties}\label{s2}

The results of the preceding section motivate a problem of finding natural examples of $\sigma$-ideals possessing the  closed $\pm n$-Steinhaus property. Many examples of such ideals can be constructed as follows.

Given a subset $K$ of a topological group $G$, consider the family $\mathcal B_K^\perp$ of all closed subsets $B\subset G$ such that for every $x,y\in G$ the set $K\cap(xBy)$ is nowhere dense in $K$. Let $\mathcal I^\perp_K$ be the $\sigma$-ideal generated by the family $\mathcal B_K^\perp$. The $\sigma$-ideal $\mathcal I^\perp_K$ consists of subsets of countable unions of sets of the family $\mathcal B^\perp_K$. It is clear that for any set $K$ in a topological group $G$ the $\sigma$-ideal $\I^\perp_K$ is invariant. If $K=K^{-1}$, then the ideal $\I^\perp_K$ is symmetric.

For a Baire subspace $K$ of a topological group $G$ the closed $\pm n$-Steinhaus property of the ideal $\I^\perp_K$ can be characterized as follows.

\begin{proposition}\label{p:chi-pmnnS} Let $K$ be a Baire subspace of a topological group $G$. The $\sigma$-ideal $\I^\perp_K$ has the closed $\pm n$-Steinhaus property for some $n\in\IN$ if and only if for any non-open sets $U_1,\dots,U_n\subset K$ and points\break $x_1,y_1,\dots,x_n,y_n\in G$ the set $(x_1\bar U_1y_1)^{\pm}\cdots (x_n\bar U_ny_n)^{\pm}$ is not nowhere dense in $G$.
\end{proposition}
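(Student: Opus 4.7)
The plan is to first establish the auxiliary fact that for a Baire subspace $K$ of $G$, a closed subset $A\subset G$ belongs to the $\sigma$-ideal $\I^\perp_K$ if and only if it already belongs to the generating family $\mathcal B^\perp_K$, i.e., $K\cap xAy$ is nowhere dense in $K$ for every $x,y\in G$. One direction is immediate from the definitions. For the other, assume $A\subset\bigcup_{k\in\w}B_k$ with each closed $B_k\in\mathcal B^\perp_K$. Then for every $x,y\in G$ the set $K\cap xAy$ is a closed subset of $K$ contained in $\bigcup_k K\cap xB_ky$, a countable union of closed nowhere dense subsets of $K$; hence $K\cap xAy$ is closed and meager in the Baire space $K$, and therefore nowhere dense in $K$.

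With this observation in hand, both implications of the proposition become direct translations between closed sets $A\notin\I^\perp_K$ and non-empty open subsets of $K$. For the ``if'' direction, given closed $A_1,\dots,A_n\notin\I^\perp_K$, the observation yields points $x_i,y_i\in G$ for which $K\cap x_iA_iy_i$ is not nowhere dense in $K$, and hence contains a non-empty open subset $U_i$ of $K$. Then $x_i^{-1}U_iy_i^{-1}\subset A_i$, and since $A_i$ is closed in $G$, also $x_i^{-1}\bar U_iy_i^{-1}\subset A_i$. Applying the hypothesis to the open sets $U_1,\dots,U_n$ and the points $x_1^{-1},y_1^{-1},\dots,x_n^{-1},y_n^{-1}$ shows that $(x_1^{-1}\bar U_1y_1^{-1})^{\pm}\cdots(x_n^{-1}\bar U_ny_n^{-1})^{\pm}$ is not nowhere dense in $G$; this set is contained in $A_1^{\pm}\cdots A_n^{\pm}$, so the latter is not nowhere dense either.

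For the ``only if'' direction, given non-empty open sets $U_i\subset K$ and points $x_i,y_i\in G$, I would define $A_i:=x_i\bar U_iy_i$. Each $A_i$ is closed in $G$, and the identity $x_i^{-1}A_iy_i^{-1}=\bar U_i\supset U_i$ shows that $K\cap x_i^{-1}A_iy_i^{-1}$ has non-empty interior in $K$ and hence is not nowhere dense in $K$. Consequently $A_i\notin\mathcal B^\perp_K$, and by the auxiliary observation $A_i\notin\I^\perp_K$. The closed $\pm n$-Steinhaus property applied to $A_1,\dots,A_n$ then yields that $A_1^{\pm}\cdots A_n^{\pm}=(x_1\bar U_1y_1)^{\pm}\cdots(x_n\bar U_ny_n)^{\pm}$ is not nowhere dense in $G$.

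The main obstacle is the auxiliary equivalence between membership in $\I^\perp_K$ and in $\mathcal B^\perp_K$ for closed sets. This is precisely where Baireness of $K$ enters: without it, a closed meager subset of $K$ need not be nowhere dense in $K$, and one could not pass from the $\sigma$-ideal back to its generating family, so the reduction underlying both implications would collapse.
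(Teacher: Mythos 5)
Your proof is correct and takes essentially the same route as the paper's: both directions translate between closed sets outside $\I^\perp_K$ and non-empty open subsets of $K$, with the Baire category theorem supplying the key fact that a closed set covered by countably many members of $\mathcal B^\perp_K$ meets every translate of $K$ in a nowhere dense set. The only cosmetic difference is that you isolate this as a standalone lemma (a closed set lies in $\I^\perp_K$ iff it lies in $\mathcal B^\perp_K$), whereas the paper inlines the equivalent argument by applying Baireness to the open subspace $x_iU_iy_i$.
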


\begin{proof}  If $\mathcal I^\perp_K$ fails to have the closed $\pm n$-Steinhaus property, then there are closed sets $A_1,\dots,A_n\notin\mathcal I^\perp_K$ in $G$ such that the product $A_1^{\pm}\cdots A_n^{\pm}$ is nowhere dense in $G$. It follows from $A_1,\dots,A_n\notin\mathcal B^\perp_K$ that for every $i\le n$ there are points $x_i,y_i\in G$ such that the set $K\cap (x_i^{-1}A_iy_i^{-1})$ contains a non-empty open subset $U_i$ of $K$. It follows that $\bar U_i\subset x_i^{-1}A_iy_i^{-1}$ and hence  $x_i\bar U_iy_i\subset A_i$. Then the set $(x_1\bar U_1 y_1)^{\pm}\cdots (x_n\bar U_ny_n)^{\pm}\subset A_1^{\pm}\cdots A_n^{\pm}$ is nowhere dense in $G$.
This completes the proof of the ``if'' part.

To prove the ``only if'', assume that for some non-open sets $U_1,\dots,U_n\subset K$ and some points $x_1,y_1,\dots,x_n,y_n\in G$ the set $(x_1\bar U_1y_1)^{\pm}\cdots (x_n\bar U_ny_n)^{\pm}$ is nowhere dense in $G$. We claim that for every $i\le n$ the closed set $A_i=x_i\bar U_i y_i$ does not belong to the ideal $\mathcal I^\perp_K$. Assuming the opposite, we can find a countable subfamily $\F_i\subset\mathcal B^\perp_K$ such that $A_i\subset\bigcup\F_i$. Since the space $K$ is Baire, so is its open subspace $U_i$ and its shift $x_iU_iy_i\subset A_i\subset\bigcup\F_i$. Since $x_iU_iy_i$ is Baire, there is a set $F\in\F_i$ such that the set $x_iU_iy_i\cap F$ contains some open subset $V_i$ of $x_iU_iy_i$. Then $x_i^{-1}V_iy_i^{-1}\subset U_i\subset K\cap x_i^{-1}Fy_i^{-1}$ is a non-empty open subset of $K$, which contradicts the inclusion $F\in\mathcal B^\perp_K$. This contradiction shows that the for every $i\le n$ the closed set $A_i=x_i\bar U_iy_i$ does not belong to the ideal $\I$. Since the product $A_1^{\pm}\cdots A_n^{\pm}=(x_1\bar U_1y_1)^{\pm}\cdots (x_n\bar U_ny_n)^{\pm}$ is nowhere dense in $G$, the ideal $\I$ fails to have the closed $\pm n$-Steinhaus property.
\end{proof}

By analogy we can prove a characterization of $\sigma$-ideals $\I^\perp_K$ possessing the closed $\e$-Steinhaus property.

\begin{proposition}\label{p:chi-nnS} Let $K$ be a Baire subspace of a topological group $G$. The $\sigma$-ideal $\I^\perp_K$ has the closed $\e$-Steinhaus property for some $\e=(\e_1,\dots,\e_n)\in \{-1,1\}^n$, $n\in\IN$, if and only if for any non-open sets $U_1,\dots,U_n\subset K$ and points $x_1,y_1,\dots,x_n,y_n\in G$ the set $(x_1\bar U_1y_1)^{\e_1}\cdots (x_n\bar U_ny_n)^{\e_n}$ is not nowhere dense in $G$.
\end{proposition}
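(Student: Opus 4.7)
My plan is to follow the proof of Proposition~\ref{p:chi-pmnnS} essentially line by line, substituting each symmetrized factor $(\cdot)^{\pm}$ by the signed factor $(\cdot)^{\e_i}$. The reason the substitution is harmless is that the earlier argument uses only two features of the symmetrization: monotonicity in the set argument (if $B\subset C$ then $B^{\pm}\subset C^{\pm}$) and closedness of the resulting factor and its translates; both continue to hold with $(\cdot)^{\pm}$ replaced by $(\cdot)^{\e_i}$ for any fixed $\e_i\in\{-1,1\}$.

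For the ``if'' direction I would suppose $\I^\perp_K$ fails the closed $\e$-Steinhaus property and fix closed witnesses $A_1,\dots,A_n\notin\I^\perp_K$ with $A_1^{\e_1}\cdots A_n^{\e_n}$ nowhere dense in $G$. Since each $A_i\notin\mathcal B^\perp_K$, I would extract points $x_i,y_i\in G$ and a non-empty open subset $U_i\subset K$ with $U_i\subset K\cap x_i^{-1}A_iy_i^{-1}$; taking closures in $G$ gives $x_i\bar U_iy_i\subset A_i$, so the inclusion
$$(x_1\bar U_1y_1)^{\e_1}\cdots(x_n\bar U_ny_n)^{\e_n}\subset A_1^{\e_1}\cdots A_n^{\e_n}$$
exhibits a nowhere dense product of the asserted form.

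For the converse I would start from non-empty open sets $U_1,\dots,U_n\subset K$ and points $x_i,y_i\in G$ with $(x_1\bar U_1y_1)^{\e_1}\cdots(x_n\bar U_ny_n)^{\e_n}$ nowhere dense, set $A_i:=x_i\bar U_iy_i$, and show that $A_i\notin\I^\perp_K$ for every $i$; combined with the hypothesis this witnesses the failure of the closed $\e$-Steinhaus property. The verification that $A_i\notin\I^\perp_K$ is the same Baire-category argument as in Proposition~\ref{p:chi-pmnnS}: if $A_i\subset\bigcup\F_i$ for some countable $\F_i\subset\mathcal B^\perp_K$, then the open subset $x_iU_iy_i$ of $A_i$ is Baire (being a translate of the open subspace $U_i$ of the Baire space $K$), hence by the Baire category theorem one of the relatively closed sets $F\cap x_iU_iy_i$ has non-empty relative interior $V_i$; then $x_i^{-1}V_iy_i^{-1}$ is a non-empty open subset of $K$ lying inside $K\cap x_i^{-1}Fy_i^{-1}$, contradicting $F\in\mathcal B^\perp_K$.

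I do not foresee any genuine obstacle, since this is a purely formal adaptation. The only bookkeeping point worth flagging is that $U_i$, $V_i$, and $x_i^{-1}V_iy_i^{-1}$ live naturally in the subspaces $K$ and $x_iKy_i$ rather than in all of $G$, so ``open'' must be interpreted relatively; this causes no trouble because left and right translations are homeomorphisms of $G$ that restrict to homeomorphisms between $K$ and $x_iKy_i$.
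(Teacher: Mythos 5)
Your proposal is correct and is exactly the argument the paper intends: the paper gives no separate proof of Proposition~\ref{p:chi-nnS}, merely stating that it follows ``by analogy'' with Proposition~\ref{p:chi-pmnnS}, and your write-up carries out that analogy faithfully (including the correct observations that $(\cdot)^{\e_i}$ preserves inclusions and that $x_i\bar U_iy_i$ is closed in $G$, so it is a legitimate witness for the failure of the closed $\e$-Steinhaus property). The only cosmetic remark is that ``non-open'' in the statement is a typo for ``non-empty open,'' which you have read correctly.
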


For symmetric subsets $K$ the characterization given in Proposition~\ref{p:chi-nnS} can be simplified. A subset $K$ of a group $G$ is called {\em symmetric} if $K^{-1}=c^{-1}Kc^{-1}$ for some point $c\in G$ (called the {\em center of symmetry} of $K$).

\begin{proposition}\label{p:chi-nnS2} Let $K$ be a Baire symmetric subspace of a topological group $G$. The $\sigma$-ideal $\I^\perp_K$ has the closed $\e$-Steinhaus property for some $\e=(\e_1,\dots,\e_n)\in \{-1,1\}^n$, $n\in\IN$, if and only if for any non-open sets $U_1,\dots,U_n\subset K$ and points $x_1,y_1,\dots,x_n,y_n\in G$ the set $(x_1\bar U_1y_1)\cdots (x_n\bar U_ny_n)$ is not nowhere dense in $G$.
\end{proposition}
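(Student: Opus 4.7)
The plan is to reduce this to Proposition~\ref{p:chi-nnS} by exploiting the symmetry of $K$ to convert inversions inside factors of the form $x\bar U y$ into mere translations, and vice versa. Fix a center of symmetry $c\in G$, so that $K^{-1}=c^{-1}Kc^{-1}$, equivalently $cK^{-1}c=K$. Then the map $\phi:G\to G$, $\phi(g)=cg^{-1}c$, is a self-homeomorphism of $G$ (an involution, since $\phi\circ\phi=\mathrm{id}$) which restricts to a homeomorphism $K\to K$. In particular $\phi$ sends non-empty open subsets of $K$ to non-empty open subsets of $K$, and commutes with taking closures in $G$.

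The key observation I would establish is the following rewriting identity: for any non-empty open $U\subset K$ and any $x,y\in G$, setting $W:=\phi(U)=cU^{-1}c\subset K$ gives $\bar W=c\bar U^{-1}c$ (closure in $G$), and therefore
\[
(x\bar U y)^{-1}=y^{-1}\bar U^{-1}x^{-1}=(y^{-1}c^{-1})\,\bar W\,(c^{-1}x^{-1}).
\]
Thus any factor of the form $x\bar U y$ (with $U\subset K$ non-empty open) can be exchanged freely with its inverse, at the cost of replacing the triple $(U,x,y)$ by the triple $(W,y^{-1}c^{-1},c^{-1}x^{-1})$ of the same form; dually, any factor $(x\bar U y)^{-1}$ can be rewritten as a plain factor $a\bar W b$ with $W\subset K$ non-empty open and $a,b\in G$.

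Both implications of the proposition now collapse to this rewriting combined with Proposition~\ref{p:chi-nnS}. For the ``only if'' direction, assume the closed $\e$-Steinhaus property and suppose toward a contradiction that some $+$-product $(x_1\bar U_1y_1)\cdots(x_n\bar U_ny_n)$ is nowhere dense in $G$; for each index $i$ with $\e_i=-1$ apply the rewriting identity to replace $x_i\bar U_iy_i$ by $(a_i\bar W_ib_i)^{-1}$, while for $\e_i=+1$ the factor is left untouched, producing a nowhere-dense $\e$-product of exactly the kind ruled out by Proposition~\ref{p:chi-nnS}. The ``if'' direction is the same argument run in reverse: starting from a hypothetical nowhere-dense $\e$-product witnessing the failure of the $\e$-Steinhaus property (via Proposition~\ref{p:chi-nnS}), use the identity to rewrite each inverted factor as a plain one, obtaining a nowhere-dense $+$-product and contradicting the assumed condition. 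The only step that requires any care is the verification that $\phi$ is a homeomorphism of $G$ which preserves $K$ together with its open subsets and commutes with closures in $G$; this is immediate from the symmetry relation $K=cK^{-1}c$, and the rest is purely formal bookkeeping with products and inverses.
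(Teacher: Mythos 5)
Your proposal is correct and is exactly the argument the paper intends: Proposition~\ref{p:chi-nnS2} is stated without an explicit proof as a simplification of Proposition~\ref{p:chi-nnS} for symmetric $K$, and your reduction via the involution $g\mapsto cg^{-1}c$ (absorbing each inversion into translations by the center of symmetry $c$) is the same device the paper uses explicitly in the proof of Proposition~\ref{p:prod3}. The only cosmetic point is that ``non-open sets $U_i\subset K$'' in the statement is a typo for ``non-empty open sets,'' which you have correctly read in the intended sense.
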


For abelian groups the characterizations given in Propositions~\ref{p:chi-pmnnS}--\ref{p:chi-nnS2} can be simplified.

\begin{proposition}\label{p:chi-apmnnS} Let $K$ be a Baire subspace of a topological abelian group $G$. The $\sigma$-ideal $\I^\perp_K$ has the closed $\pm n$-Steinhaus property for some $n\in\IN$ if and only if for any non-empty open sets $U_1,\dots,U_n\subset K$ there is $(\e_1,\dots\e_n)\in\{-1,1\}^n$ such that set $U_1^{\e_1}\cdots U_n^{\e_n}$ is not nowhere dense in $G$.
\end{proposition}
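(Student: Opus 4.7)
The plan is to reduce to Proposition~\ref{p:chi-pmnnS} by using commutativity to collapse the two-sided translates $x_i\bar U_iy_i$ into one-sided ones. Writing the operation of $G$ multiplicatively and setting $z_i:=x_iy_i$, commutativity immediately gives $x_i\bar U_iy_i=z_i\bar U_i$ and $(z_i\bar U_i)^{\e_i}=z_i^{\e_i}\bar U_i^{\e_i}$, so
\[
(x_1\bar U_1 y_1)^{\e_1}\cdots(x_n\bar U_ny_n)^{\e_n}= z^\e\cdot\bar U_1^{\e_1}\cdots\bar U_n^{\e_n},\qquad z^\e:=z_1^{\e_1}\cdots z_n^{\e_n}.
\]
Taking the union over $\e\in\{-1,1\}^n$, the symmetric product $(x_1\bar U_1y_1)^{\pm}\cdots(x_n\bar U_ny_n)^{\pm}$ becomes a finite union of translates of the $2^n$ sign-pattern products $\bar U_1^{\e_1}\cdots \bar U_n^{\e_n}$.

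Two routine topological observations then do the rest. First, since translation is a homeomorphism and a finite union of nowhere dense sets is nowhere dense, this finite union is not nowhere dense in $G$ if and only if $\bar U_1^{\e_1}\cdots \bar U_n^{\e_n}$ is not nowhere dense for at least one $\e$. Second, continuity of the multiplication map $(g_1,\dots,g_n)\mapsto g_1^{\e_1}\cdots g_n^{\e_n}$, combined with the identity $\overline{U_1\times\cdots\times U_n}=\bar U_1\times\cdots\times\bar U_n$, yields $\bar U_1^{\e_1}\cdots \bar U_n^{\e_n}\subset \overline{U_1^{\e_1}\cdots U_n^{\e_n}}$; together with the obvious reverse inclusion $U_1^{\e_1}\cdots U_n^{\e_n}\subset \bar U_1^{\e_1}\cdots \bar U_n^{\e_n}$ this shows that $\bar U_1^{\e_1}\cdots \bar U_n^{\e_n}$ is not nowhere dense in $G$ exactly when $U_1^{\e_1}\cdots U_n^{\e_n}$ is.

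Feeding this back into Proposition~\ref{p:chi-pmnnS}, the closed $\pm n$-Steinhaus property of $\I^\perp_K$ becomes equivalent to the assertion that, for every non-empty open $U_1,\dots,U_n\subset K$ and every choice of $x_1,y_1,\dots,x_n,y_n\in G$, some sign pattern $\e\in\{-1,1\}^n$ makes $U_1^{\e_1}\cdots U_n^{\e_n}$ not nowhere dense in $G$. Since the condition on the $U_i$'s is manifestly independent of the $x_i,y_i$, this reduces exactly to the statement of Proposition~\ref{p:chi-apmnnS}, which settles both directions. The only mildly delicate step is the closure inclusion via continuity of multiplication; everything else is formal bookkeeping, so I do not expect a genuine obstacle.
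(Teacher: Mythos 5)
Your reduction is correct and is exactly the argument the paper intends: Proposition~\ref{p:chi-apmnnS} is stated as a ``simplification'' of Proposition~\ref{p:chi-pmnnS} for abelian groups, obtained precisely by commuting the translates out, splitting the $\pm$-product into the $2^n$ sign-pattern products, and passing between $U_i$ and $\bar U_i$ via continuity of multiplication. The only cosmetic point is that ``non-open sets'' in the statement of Proposition~\ref{p:chi-pmnnS} is a typo for ``non-empty open sets'' (as its proof makes clear), and you have read it correctly.
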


\begin{proposition}\label{p:chi-annS} Let $K$ be a Baire subspace of a topological abelian group $G$. The $\sigma$-ideal $\I^\perp_K$ has the closed $\e$-Steinhaus property for some $\e=(\e_1,\dots,\e_n)\in\{-1,1\}^n$, $n\in\IN$ if and only if for any non-empty open sets $U_1,\dots,U_n\subset K$ the $\e$-product $U_1^{\e_1}\cdots U^{\e_n}_n$ is not nowhere dense in $G$.
\end{proposition}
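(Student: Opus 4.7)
The plan is to derive this from Proposition~\ref{p:chi-nnS} by exploiting two simplifications available in the abelian setting: two-sided translates $x_i\bar U_iy_i$ collapse to one-sided translates $(x_iy_i)\bar U_i$ that can be absorbed into a single global translation, and joint continuity of the group operations together with commutativity allows the closure operation to be pulled outside the entire $\e$-product.

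For the ``only if'' direction, I would specialize the criterion in Proposition~\ref{p:chi-nnS} to $x_i=y_i=e$, the identity of $G$. This immediately gives that for any non-empty open sets $U_1,\dots,U_n\subset K$ the product $\bar U_1^{\e_1}\cdots \bar U_n^{\e_n}$ is not nowhere dense in $G$. I then invoke the closure identity
\[
\overline{U_1^{\e_1}\cdots U_n^{\e_n}}=\overline{\bar U_1^{\e_1}\cdots \bar U_n^{\e_n}},
\]
whose nontrivial inclusion follows by approximating each $y_i\in\bar U_i$ by a net in $U_i$ and passing to the limit in the continuous map $(y_1,\dots,y_n)\mapsto y_1^{\e_1}\cdots y_n^{\e_n}$. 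Since both sides have the same closure, $U_1^{\e_1}\cdots U_n^{\e_n}$ is nowhere dense iff $\bar U_1^{\e_1}\cdots \bar U_n^{\e_n}$ is, so the desired simplified condition follows.

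For the ``if'' direction, I would verify the criterion of Proposition~\ref{p:chi-nnS}. Given non-empty open $U_i\subset K$ and points $x_i,y_i\in G$, commutativity of $G$ rewrites the $\e$-product as
\[
(x_1\bar U_1y_1)^{\e_1}\cdots(x_n\bar U_ny_n)^{\e_n}=(x_1y_1)^{\e_1}\cdots(x_ny_n)^{\e_n}\cdot\bar U_1^{\e_1}\cdots\bar U_n^{\e_n},
\]
exhibiting it as a translate of $\bar U_1^{\e_1}\cdots\bar U_n^{\e_n}$. Translations are homeomorphisms of $G$ and hence preserve nowhere denseness, and the closure identity from the previous paragraph again equates nowhere denseness of $\bar U_1^{\e_1}\cdots\bar U_n^{\e_n}$ with that of $U_1^{\e_1}\cdots U_n^{\e_n}$. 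The hypothesis therefore implies that $(x_1\bar U_1y_1)^{\e_1}\cdots(x_n\bar U_ny_n)^{\e_n}$ is not nowhere dense, and Proposition~\ref{p:chi-nnS} yields the closed $\e$-Steinhaus property of $\I^\perp_K$. No genuine obstacle arises: the whole argument is the abelian streamlining of Proposition~\ref{p:chi-nnS}, resting only on the two bookkeeping observations above, each of which is immediate from joint continuity of the group operations together with commutativity.
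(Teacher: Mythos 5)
Your proof is correct and follows exactly the route the paper intends: Proposition~\ref{p:chi-annS} is stated there without proof as an abelian ``simplification'' of Proposition~\ref{p:chi-nnS}, and your two observations --- commutativity collapsing the two-sided translates $x_i\bar U_iy_i$ into a single translation of $\bar U_1^{\e_1}\cdots\bar U_n^{\e_n}$ (translations preserving nowhere density), together with the closure identity $\overline{U_1^{\e_1}\cdots U_n^{\e_n}}=\overline{\bar U_1^{\e_1}\cdots \bar U_n^{\e_n}}$ --- are precisely the intended reductions. Note only that the phrase ``non-open sets'' in Propositions~\ref{p:chi-pmnnS} and \ref{p:chi-nnS} is a typo for ``non-empty open sets'' (as their proofs make clear), so your reading of the criterion you invoke is the correct one.
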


\begin{proposition}\label{p:chi-annS2} Let $K$ be a Baire symmetric subspace of a topological abelian group $G$. The $\sigma$-ideal $\I^\perp_K$ has the closed $\e$-Steinhaus property for some $\e=(\e_1,\dots,\e_n)\in \{-1,1\}^n$, $n\in\IN$, if and only if for any non-empty open sets $U_1,\dots,U_n\subset K$ the product $U_1\cdots U_n$ is not nowhere dense in $G$.
\end{proposition}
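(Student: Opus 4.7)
The plan is to deduce Proposition~\ref{p:chi-annS2} directly from Proposition~\ref{p:chi-annS}. Since the latter already characterizes the closed $\e$-Steinhaus property of $\I^\perp_K$ for a Baire subspace $K$ of an abelian topological group $G$ by the condition that the $\e$-product $U_1^{\e_1}\cdots U_n^{\e_n}$ is not nowhere dense in $G$ for all non-empty open $U_1,\dots,U_n\subset K$, it suffices to verify that, under the additional assumption that $K$ is symmetric, this $\e$-product condition is equivalent, for the given fixed sequence $\e=(\e_1,\dots,\e_n)$, to the formally different condition that the plain product $U_1\cdots U_n$ is not nowhere dense.

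The key observation is that, in an abelian group, the defining identity $K^{-1}=c^{-1}Kc^{-1}$ of symmetry collapses to $K^{-1}=c^{-2}K$, equivalently $K=c^{2}K^{-1}$. Consequently, for every non-empty open subset $U$ of $K$ the set $c^{2}U^{-1}$ is again a non-empty open subset of $K$: writing $U=W\cap K$ for some open $W\subset G$, we obtain $c^{2}U^{-1}=c^{2}W^{-1}\cap c^{2}K^{-1}=c^{2}W^{-1}\cap K$. Thus the involution $U\mapsto c^{2}U^{-1}$ preserves the family of non-empty open subsets of $K$.

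For the implication from the plain-product condition to the $\e$-product condition, fix non-empty open $U_1,\dots,U_n\subset K$ and set $V_i=c^{2}U_i^{-1}$ whenever $\e_i=-1$ and $V_i=U_i$ whenever $\e_i=1$. Each $V_i$ is a non-empty open subset of $K$, so the hypothesis applied to $V_1,\dots,V_n$ yields that $V_1\cdots V_n$ is not nowhere dense in $G$. A short bookkeeping computation in the abelian group $G$ then gives
$$V_1\cdots V_n\;=\;c^{2k}\,U_1^{\e_1}\cdots U_n^{\e_n},$$
where $k=|\{i\le n:\e_i=-1\}|$. Since nowhere-denseness is invariant under translations in $G$, it follows that $U_1^{\e_1}\cdots U_n^{\e_n}$ is not nowhere dense either. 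The reverse implication is proved by applying the same substitution in the opposite direction: starting from open $U_i\subset K$, the same sets $V_i$ satisfy $V_1^{\e_1}\cdots V_n^{\e_n}=c^{-2k}\,U_1\cdots U_n$, so the assumed $\e$-product condition on the $V_i$ transfers to a plain-product statement for the $U_i$.

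No serious obstacle is expected: the whole argument reduces to a formal shifting trick by powers of the centre of symmetry $c$, which works because commutativity of $G$ lets inverses and translations commute freely, and because symmetry of $K$ guarantees that inverting an open piece of $K$ and translating by $c^{2}$ returns one to an open piece of $K$. The only minor point to double-check is that the exponent $2k$ appears with consistent signs on the two sides of the equivalence, which is immediate from the definition of $V_i$.
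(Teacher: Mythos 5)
Your proposal is correct and is essentially the intended derivation: the paper states Proposition~\ref{p:chi-annS2} without proof as a "simplification" of Proposition~\ref{p:chi-annS}, and the reduction it has in mind is exactly your observation that in an abelian group the symmetry $K^{-1}=c^{-2}K$ makes $U\mapsto c^{2}U^{-1}$ an involution on non-empty open subsets of $K$, so that the $\e$-product and the plain product differ only by the translation $c^{\pm 2k}$, under which nowhere density is invariant.
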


Now we apply Propositions~\ref{p:chi-apmnnS}--\ref{p:chi-annS2} to sets $K$ having product structure.

\begin{proposition}\label{p:prod} Let $A_k\subset G_k$ be non-empty sets in discrete topological abelian groups $G_k$, $k\in\w$. For the closed set $K=\prod_{k\in\w}A_k$ in the topological group $G=\prod_{k\in\w}G_k$ the $\sigma$-ideal $\I^\perp_K$ has the closed $\pm n$-Steinhaus property for some $n\in\IN$ if and only if for some sequence $\e_1,\dots,\e_n\in\{-1,1\}$ and some $k_0\in\w$ the set $A_k^{\e_1}\cdots A_k^{\e_n}$ equals $G_k$ for all $k\ge k_0$.
\end{proposition}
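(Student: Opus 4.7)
The plan is to use the characterization of the closed $\pm n$-Steinhaus property given in Proposition~\ref{p:chi-apmnnS}, combined with a direct analysis of how product sets behave in products of discrete abelian groups. First I verify that $K=\prod_{k\in\w}A_k$ is a Baire space: as a countable product of discrete (in particular, completely metrizable) spaces, $K$ is completely metrizable and therefore Baire, so Proposition~\ref{p:chi-apmnnS} applies. The structural observation that drives the proof is this: since each $G_k$ is discrete, every basic open set of $G=\prod_{k\in\w}G_k$ has the form $\prod_{k<N}\{g_k\}\times\prod_{k\ge N}G_k$ for some $N\in\w$, and similarly for the basic open sets of $K$. Consequently, a non-empty product $\prod_{k\in\w}S_k\subset G$ is automatically closed, and it fails to be nowhere dense in $G$ if and only if there is some $N\in\w$ with $S_k=G_k$ for every $k\ge N$.

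For the ``only if'' direction, assume $\I^\perp_K$ has the closed $\pm n$-Steinhaus property. Apply Proposition~\ref{p:chi-apmnnS} to the non-empty open sets $U_1=\cdots=U_n=K$ to obtain signs $\e_1,\dots,\e_n\in\{-1,1\}$ such that
\[
K^{\e_1}\cdots K^{\e_n}=\prod_{k\in\w}A_k^{\e_1}\cdots A_k^{\e_n}
\]
is not nowhere dense in $G$. Since each $A_k$ is non-empty, so is each factor $A_k^{\e_1}\cdots A_k^{\e_n}$, and the structural observation provides $k_0\in\w$ with $A_k^{\e_1}\cdots A_k^{\e_n}=G_k$ for all $k\ge k_0$.

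For the ``if'' direction, fix the signs $\e_1,\dots,\e_n$ and the index $k_0$ supplied by the hypothesis, and choose arbitrary non-empty open sets $U_1,\dots,U_n\subset K$. Each $U_i$ contains a basic neighborhood of the form $\prod_{k<N_i}\{a_{i,k}\}\times\prod_{k\ge N_i}A_k$; letting $N=\max(k_0,N_1,\dots,N_n)$ and extending each finite tuple $(a_{i,k})_{k<N_i}$ by picking arbitrary $a_{i,k}\in A_k$ for $N_i\le k<N$, I obtain refined basic open sets $V_i=\prod_{k<N}\{a_{i,k}\}\times\prod_{k\ge N}A_k\subset U_i$. Since $N\ge k_0$, their $\e$-product
\[
V_1^{\e_1}\cdots V_n^{\e_n}=\prod_{k<N}\bigl\{a_{1,k}^{\e_1}\cdots a_{n,k}^{\e_n}\bigr\}\times\prod_{k\ge N}G_k
\]
is already a basic open set of $G$, so $U_1^{\e_1}\cdots U_n^{\e_n}\supseteq V_1^{\e_1}\cdots V_n^{\e_n}$ is not nowhere dense in $G$, and Proposition~\ref{p:chi-apmnnS} delivers the closed $\pm n$-Steinhaus property. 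The only real subtlety is the structural observation about nowhere dense product sets in a product of discrete spaces; once that is in hand, the remainder of the proof is straightforward bookkeeping with basic neighborhoods.
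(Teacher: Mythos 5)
Your proof is correct and follows essentially the same route as the paper's: both directions rest on the observation that a product $\prod_{k}S_k$ in a countable product of discrete groups is closed and is nowhere dense iff $S_k\ne G_k$ for infinitely many $k$, and your ``if'' direction is the paper's argument almost verbatim. The only cosmetic difference is that for the ``only if'' direction you invoke Proposition~\ref{p:chi-apmnnS} with $U_1=\cdots=U_n=K$, whereas the paper argues directly from Definition~\ref{d:pmn} using the closed sets $A_i=K$ and the fact that $K\notin\I^\perp_K$; the content is identical.
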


\begin{proof} If for every sequence $\e=(\e_1,\dots,\e_n)\in\{-1,1\}^n$ the set $A_k^\e=A_k^{\e_1}\cdots A_k^{\e_n}$ is not equal to $G_k$ for infinitely many numbers $k$, then the countable product $\prod_{k\in\w}A_k^\e$ is nowhere dense in the topological group $G=\prod_{k\in\w}G_k$ and the closed set $K^{\pm n}=\bigcup_{\e\in \{-1,1\}^n}K^{\e_1}\cdots K^{\e_n}=\bigcup_{\e\in\{-1,1\}^n}\prod_{k\in\w}A_k^\e$ is nowhere dense in $G$. Since $K\notin\I^\perp_K$, the ideal $\I^\perp_K$ fails to have the closed $\pm n$-Steinhaus property.

Now assume that for some sequence $\e_1,\dots,\e_n\in\{-1,1\}$ and some $k_0\in\w$ the set $A_k^{\e_1}\cdots A_k^{\e_n}=G_k$ for all $k\ge k_0$. By Proposition~\ref{p:chi-apmnnS}, to prove that the ideal $\I^\perp_K$ has the closed $\pm n$-Steinhaus property, it suffices to check that for any non-empty subsets $U_1,\dots,U_n\subset K$ the product $U_1^{\e_1}\cdots U_n^{\e_1}$ is not nowhere dense in $G$. By the definition of the Tychonoff product topology on $K=\prod_{k\in\w}A_k$, there are a number $m\ge k_0$ and points $u_0,\dots,u_n\in \prod_{k<m}G_k$ such that $\{u_i\}\times \prod_{k\ge m} A_k\subset U_i$ for all $i\le n$. Taking into account that $A_k^{\e_1}\cdots A_k^{\e_n}=G_k$ for all $k\ge m\ge k_0$, we conclude that $\{u_0\cdots u_n\}\cdot \prod_{k\ge m} G_k\subset U_1^{\e_1}\cdots U_n^{\e_n}$, which means that the latter set has non-empty interior and is not nowhere dense in the topological group $G$.
\end{proof}

By analogy we can prove the following characterization.

\begin{proposition}\label{p:prod2} Let $A_k\subset G_k$ be non-empty sets in discrete topological abelian groups $G_k$, $k\in\w$. For the closed set $K=\prod_{k\in\w}A_k$ in the topological group $G=\prod_{k\in\w}G_k$ the $\sigma$-ideal $\I^\perp_K$ has the closed $\e$-Steinhaus property for some $\e=(\e_1,\dots,\e_n)\in\{-1,1\}^n$, $n\in\IN$, if and only if for some $k_0\in\w$ the set $A_k^{\e_1}\cdots A_k^{\e_n}$ equals $G_k$ for all $k\ge k_0$.
\end{proposition}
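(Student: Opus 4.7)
The plan is to mimic the argument for Proposition~\ref{p:prod}, but invoking the sign-specific characterization Proposition~\ref{p:chi-annS} in place of Proposition~\ref{p:chi-apmnnS}. The applicability of Proposition~\ref{p:chi-annS} is legitimate because $K=\prod_{k\in\w}A_k$ is a countable product of discrete (hence completely metrizable) spaces, so $K$ is Polish and in particular Baire. The single algebraic fact driving everything is that, since $G=\prod_k G_k$ is abelian and the group operations are coordinate-wise, for any $\e=(\e_1,\dots,\e_n)\in\{-1,1\}^n$ one has the identity
\[
K^{\e_1}\cdots K^{\e_n}=\prod_{k\in\w}\bigl(A_k^{\e_1}\cdots A_k^{\e_n}\bigr).
\]

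For the ``only if'' direction I would argue by contrapositive. Assume the set $A_k^{\e_1}\cdots A_k^{\e_n}$ differs from $G_k$ for infinitely many $k$. Then each factor $A_k^{\e_1}\cdots A_k^{\e_n}\subsetneq G_k$ is a proper closed-and-open subset of the discrete group $G_k$ for infinitely many coordinates, so the countable product $\prod_{k\in\w}(A_k^{\e_1}\cdots A_k^{\e_n})$ is closed and nowhere dense in $G$. By the identity above, this is exactly the $\e$-product $K^{\e_1}\cdots K^{\e_n}$. Since $K\notin\I^\perp_K$ (it is non-meager in itself), this witnesses that $\I^\perp_K$ does not have the closed $\e$-Steinhaus property.

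For the ``if'' direction, assume $A_k^{\e_1}\cdots A_k^{\e_n}=G_k$ for all $k\ge k_0$. By Proposition~\ref{p:chi-annS}, it suffices to show that for any non-empty open sets $U_1,\dots,U_n\subset K$ the $\e$-product $U_1^{\e_1}\cdots U_n^{\e_n}$ is not nowhere dense in $G$. From the definition of the Tychonoff topology on $K$, there exist $m\ge k_0$ and points $u_i\in\prod_{k<m}G_k$ with $\{u_i\}\times\prod_{k\ge m}A_k\subset U_i$ for every $i\le n$. Taking the $\e$-product coordinate-wise and using $A_k^{\e_1}\cdots A_k^{\e_n}=G_k$ for $k\ge m$ yields
\[
\bigl(u_1^{\e_1}\cdots u_n^{\e_n}\bigr)\cdot\prod_{k\ge m}G_k \;\subset\; U_1^{\e_1}\cdots U_n^{\e_n},
\]
so the $\e$-product has non-empty interior in $G$.

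The only thing to watch is the direction-of-signs bookkeeping: unlike in Proposition~\ref{p:prod}, where we were free to choose the sign sequence after seeing the open sets, here the sequence $\e$ is fixed in advance, which is precisely why the condition on $A_k$ must hold for the same prescribed $\e$. Nothing in either direction requires a choice of $\e$, so this is actually a routine adaptation; the main ``obstacle'' is merely being careful that the coordinate-wise commutation of products with exponents $\e_i$ goes through, which it does because $G$ is abelian.
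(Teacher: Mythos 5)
Your proposal is correct and follows essentially the same route as the paper, which proves Proposition~\ref{p:prod2} only ``by analogy'' with Proposition~\ref{p:prod}; you carry out exactly that analogy, replacing Proposition~\ref{p:chi-apmnnS} by the sign-specific Proposition~\ref{p:chi-annS} and using the coordinate-wise identity $K^{\e_1}\cdots K^{\e_n}=\prod_{k}(A_k^{\e_1}\cdots A_k^{\e_n})$ in the contrapositive direction. (Only a cosmetic remark: $K$ need not be separable if the $G_k$ are uncountable, so ``Polish'' should be ``completely metrizable, hence Baire''; this does not affect the argument.)
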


For symmetric sets $A_k$ in abelian groups $G_k$ the characterizations given in Propositions~\ref{p:prod} and \ref{p:prod2} can be unified.

\begin{proposition}\label{p:prod3} Let $n\in\IN$ and $A_k\subset G_k$ be non-empty symmetric sets in discrete topological abelian groups $G_k$, $k\in\w$. For the closed set $K=\prod_{k\in\w}A_k$ in the topological group $G=\prod_{k\in\w}G_k$ and the $\sigma$-ideal $\I^\perp_K$ the following conditions are equivalent:
\begin{enumerate}
\item $\I^\perp_K$ has the closed $n$-Steinhaus property;
\item $\I^\perp_K$ has the closed $\pm n$-Steinhaus property;
\item the set $A_k^{\cdot n}=\{a_1\cdots a_n:a_1,\dots,a_n\in A_k\}$ equals $G_k$ for all but finitely many $k$.
\end{enumerate}
\end{proposition}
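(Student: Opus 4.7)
The plan is to chain together the equivalences by routing through the two previously established product characterizations (Propositions \ref{p:prod} and \ref{p:prod2}) and exploiting the hypothesis that each $A_k$ is symmetric in order to absorb any $-1$ exponents into a single translate.

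First, the implication $(1)\Ra(2)$ is immediate from the remark following Definition~\ref{d:pmn}: every closed $n$-Steinhaus property is a special case of the closed $\pm n$-Steinhaus property, obtained by taking $\e_1=\dots=\e_n=1$.

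For $(2)\Ra(3)$, I would apply Proposition~\ref{p:prod} directly: its conclusion yields a sign sequence $(\e_1,\dots,\e_n)\in\{-1,1\}^n$ and an index $k_0\in\w$ such that $A_k^{\e_1}\cdots A_k^{\e_n}=G_k$ for all $k\ge k_0$. Now the hypothesis that $A_k$ is symmetric means $A_k^{-1}=c_k^{-1}A_kc_k^{-1}$ for some $c_k\in G_k$; since $G_k$ is abelian this simplifies to $A_k^{-1}=c_k^{-2}A_k$. Hence each factor $A_k^{\e_i}$ is a translate of $A_k$, and, letting $m$ denote the number of indices $i$ with $\e_i=-1$, we obtain $A_k^{\e_1}\cdots A_k^{\e_n}=c_k^{-2m}A_k^{\cdot n}$. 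Therefore $A_k^{\cdot n}=c_k^{2m}G_k=G_k$ for all $k\ge k_0$, which is precisely $(3)$.

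For $(3)\Ra(1)$, I would invoke Proposition~\ref{p:prod2} with the particular sequence $\e=(1,1,\dots,1)$: since $A_k^{\cdot n}=G_k$ for all $k\ge k_0$, the proposition gives the closed $(1,\dots,1)$-Steinhaus property, which, as observed just after the definition of the closed $\e$-Steinhaus property, coincides with the closed $n$-Steinhaus property.

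The only delicate point I foresee is the compatibility between the paper's general notion of symmetry $A_k^{-1}=c_k^{-1}A_kc_k^{-1}$ and the way signed products $A_k^{\e_1}\cdots A_k^{\e_n}$ get reduced to a translate of $A_k^{\cdot n}$: one must check that the ``center of symmetry'' constants $c_k^{2m}$ can indeed be pulled out through the abelian product in order to identify the coset $c_k^{-2m}A_k^{\cdot n}=G_k$ with $A_k^{\cdot n}$ itself. Once this bookkeeping is handled, no further work beyond citing Propositions~\ref{p:prod} and~\ref{p:prod2} is required.
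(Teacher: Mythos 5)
Your proof is correct and follows essentially the same route as the paper: the trivial implication $(1)\Ra(2)$, the equivalence of $(1)$ and $(3)$ via Proposition~\ref{p:prod2} with $\e=(1,\dots,1)$, and the key step $(2)\Ra(3)$ via Proposition~\ref{p:prod} together with the observation that symmetry lets one write each $A_k^{\e_i}$ as a translate of $A_k$, so that $A_k^{\e_1}\cdots A_k^{\e_n}$ is a coset of $A_k^{\cdot n}$ and hence $A_k^{\cdot n}=G_k$. Your bookkeeping with $c_k^{-2m}$ is exactly the paper's computation $A_k^{\e_1}\cdots A_k^{\e_n}=A_k^{\cdot n}\cdot c_k^{\e_1-1}\cdots c_k^{\e_n-1}$.
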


\begin{proof} The equivalence $(1)\Leftrightarrow(3)$ follows from Proposition~\ref{p:prod2} and $(1)\Ra(2)$ is trivial. So, it remains to check that $(2)\Ra(3)$. Assuming that $\I^\perp_K$ has the closed $\pm n$-Steinhaus property and applying Proposition~\ref{p:prod}, we can find numbers $\e_1,\dots,\e_n\in\{-1,1\}$ and $k_0\in\w$ such that $A_k^{\e_1}\cdots A_k^{\e_n}=G_k$ for all $k\ge k_0$. Since each set $A_k\subset G_k$ is symmetric, we can find an element $c_k\in G_k$ such that $A_k^{-1}=c_k^{-1}A_kc_k^{-1}$. Observe that $G=A_k^{\e_1}\cdots A_k^{\e_n}=A_k^{\cdot n}\cdot c_1^{\e_1-1}\cdots c_n^{\e_n-1}$ and hence $A_k^{\cdot n}=G_k$ for all $k\ge k_0$.
\end{proof}

Now we are ready construct $\sigma$-ideals distinguishing the closed $\pm n$-Steinhaus properties and  the closed $m$-Steinhaus properties for various $n$ and $m$. For $n\in\IN$ by $$C_n=\{z\in\IC:z^n=1\}$$we denote the cyclic group of order $n$.

\begin{example} For every $n\ge 2$ there exists an $F_\sigma$-supported invariant symmetric ideal $\I_n$ on the compact group $C_{2n+1}^\w$ which has the closed $n$-Steinhaus property but fails to have the closed $\pm(n-1)$-Steinhaus property.
\end{example}

\begin{proof} Apply Proposition~\ref{p:prod3} to the symmetric sets $A_k=\{e^{i\varphi}:\varphi \in\{-1,0,1\}\cdot\tfrac\pi{2n+1}\big\}$ in the cyclic groups $G_k=C_{2n+1}$ for $k\in\w$.
\end{proof}

\begin{example}\label{Hai} For some number sequence $(n_k)_{k\in\w}$ the compact group $G=\prod_{k\in\w}C_{n_k}$ contains a closed subset $K$ such that the $\sigma$-ideal $\I^\perp_K$ has the closed $\pm 2$-Steinhaus property but fails to have the closed $n$-Steinhaus property for every $n\in\IN$. Moreover, the symmetric $\sigma$-ideal $\I^\perp_{K^\pm}$ has the
 closed $\pm 2$-Steinhaus property but fails to have the closed $\e$-Steinhaus property for every $\e\in\bigcup_{n=1}^\infty\{-1,1\}^n$.
\end{example}

\begin{proof} By \cite{Haight} or \cite{JWW}, for every $k\in\IN$ there is a number $n_k$ and a subset $A_k$ in the cyclic group $C_{n_k}$ such that $A_kA_k^{-1}=C_{n_k}$ but the set $A_k^{\cdot k}=\{a_1\cdots a_k:a_1,\dots,a_k\in A_k\}$ is not equal to $C_{n_k}$. By Proposition~\ref{p:prod}, for the closed set $K=\prod_{k=1}^\infty A_k$ in the compact topological group $G=\prod_{k=1}^\infty C_{n_k}$ the $\sigma$-ideal $\I^\perp_K$ has the strong $\pm 2$-Steinhaus property but fails to have the strong $n$-Steinhaus property for all $n\in\IN$.

Now consider the symmetric $\sigma$-ideal $\I^\perp_{K^\pm}$. To show that it has the closed $\pm2$-Steinhaus property, fix any closed sets $A_1,A_2\notin\I^\perp_{K^\pm}$ in $G$ and find points $a_1,a_2\in G$ such that $a_1A_1\cap K^\pm$ and $a_1A_2\cap K^\pm$ are not nowhere dense in $K^\pm=K\cup K^{-1}$.
Since $A_k\cap A_{k}^{-1}\ne A_k$ for all $k\ge 2$, the intersection $K\cap K^{-1}$ is nowhere dense in $K$ and $K^{-1}$. Consequently, we can find numbers $\e_1,\e_2\in\{-1,1\}$ and non-empty open sets $U_1,U_2\subset K$ such that $U_i^{\e_i}\subset a_1A_1\cap K^\pm$ for every $i\in\{1,2\}$. By the definition of the Tychonoff product topology on $K=\prod_{k=1}^\infty A_k$, there are $m\in\IN$ and points $u_1,u_2\in \prod_{k=1}^mA_k$ such that $\{u_i\}\times\prod_{k>m}A_k\subset U_i$ for $i\in\{1,2\}$. Then $$\{u_1u_2^{-1}\}\times \prod_{k>m}C_{n_k}=\{u_1u_2^{-1}\}\times\prod_{k>m}A_kA_k^{-1}\subset U_1U_2^{-1}\subset a_1^{\e_1}A_1^{\e_1}A_2^{-\e_2}a_2^{-\e_2}\subset a_1^{\e_1}a_2^{-\e_2}A_1^\pm\cdot A_2^\pm,$$
and hence the set $A_1^\pm\cdot A_2^\pm$ has non-empty interior in $G$. This completes the proof of the closed $\pm 2$-Steinhaus property of the ideal $\I^\perp_{K^\pm}$.

Now we prove that $\I^\perp_{K^\pm}$ fails to have the closed $\e$-Steinhaus property for every $\e\in\{-1,1\}^n$, $n\in\IN$. Since the ideal $\I^\perp_{K^\pm}$ is symmetric, it suffices to check that it fails to have the closed $n$-Steinhaus property (see Proposition~\ref{p:sym}). The choice of the sets $A_k\subset G$ guarantees that for every $n\in\IN$ the set $K^{\cdot n}=\prod_{k=1}^nA_k^{\cdot n}$ is closed and nowhere dense in $G$. Observing that $K$ is not nowhere dense in $K^\pm=K\cup K^{-1}$, we conclude that $K\notin\I^\perp_{K^\pm}$, witnessing that $\I^\perp_{K^\pm}$ fails to have the closed $n$-Steinhaus property.
\end{proof}

\begin{remark}\label{pr2} By \cite{EV}, \cite[\S12.4]{Fal} the real line contains a Borel subgroup $H$ of arbitrary Hausdorff dimension $d\in(0,1)$. This result combined with Theorem~\ref{t:subgr-pmn} implies that for every $d\in(0,1)$ and $n\in\IN$ the $\sigma$-ideal $\I_d$ generated by closed subsets of Hausdorff dimension $\le d$ in $\IR$ does not have the closed $n$-Steinhaus property.
\end{remark}

\section{Ideals of Haar-null and Haar-meager sets in Polish groups}\label{s3}

The classical Steinhaus Theorem \cite{S20} (see also \cite{Stromb}) implies that for every locally compact topological group $G$ the $\sigma$-ideal $\mathcal E_G$ generated by closed sets of Haar measure zero has the closed $2$-Steinhaus property. This allows us to apply Theorem~\ref{t:subsemi} and deduce the following result proved for analytic subgroups of the real line by Laczkovich \cite{La}.

\begin{theorem}\label{Lacz} Every meager quasi-analytic subsemigroup $H$ in a locally compact group $G$ belongs to the $\sigma$-ideal $\mathcal E_G$ and hence can be covered by countably many closed sets of Haar measure zero.
\end{theorem}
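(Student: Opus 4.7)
The plan is to derive this as a direct corollary of Theorem~\ref{t:subsemi}, once the hypothesis (closed $2$-Steinhaus property of $\mathcal{E}_G$) is verified. So the argument really has two ingredients: a Steinhaus-type statement for $\mathcal{E}_G$ supplied by the classical theory, and the general subsemigroup-capture theorem already proved in Section~\ref{s1}.

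First I would record that the $\sigma$-ideal $\mathcal{E}_G$ generated by closed Haar-null subsets of the locally compact group $G$ has the closed $2$-Steinhaus property. Indeed, if $A_1, A_2$ are closed subsets of $G$ with $A_i\notin\mathcal{E}_G$, then in particular each $A_i$ has positive (inner) Haar measure, so by the classical Steinhaus theorem \cite{S20}, \cite{Stromb} the product $A_1 A_2$ contains a non-empty open subset of $G$; in particular $A_1 A_2$ is not nowhere dense. This is exactly the closed $2$-Steinhaus property in the sense of Definition~\ref{d:pmn}.

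Having verified this, I would invoke Theorem~\ref{t:subsemi} with $n=2$: if a $\sigma$-ideal on a topological group has the closed $n$-Steinhaus property, then it contains every meager quasi-analytic subsemigroup of $G$. Applying this with $\I := \mathcal{E}_G$ yields immediately that every meager quasi-analytic subsemigroup $H\subset G$ belongs to $\mathcal{E}_G$. The conclusion that $H$ is covered by countably many closed Haar-null sets is then just the definition of membership in $\mathcal{E}_G$.

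There is no real obstacle here, since the heavy lifting is done by Corollary~\ref{c:n-uIm} and Proposition~\ref{p:zak} (via Theorem~\ref{t:subsemi}); the only thing worth double-checking is the passage from ``$A_i\notin\mathcal{E}_G$'' to ``$A_i$ has positive Haar measure on some compact piece'' needed to apply the classical Steinhaus theorem, which uses that closed Haar-null sets themselves generate $\mathcal{E}_G$ and that local compactness makes the Haar-null ideal $\sigma$-additive on closed sets. Once that is noted, the theorem follows without further computation.
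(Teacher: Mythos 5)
Your proof is correct and takes essentially the same route as the paper: the paper likewise notes that the classical Steinhaus theorem gives $\mathcal E_G$ the closed $2$-Steinhaus property on a locally compact group and then simply applies Theorem~\ref{t:subsemi}. The one point you rightly flag --- passing from $A_i\notin\mathcal E_G$ for closed $A_i$ to a compact subset of positive Haar measure via inner regularity --- is exactly what the paper leaves implicit.
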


It would be interesting to find versions of Theorem~\ref{Lacz} for non-locally compact Polish groups. For this we should find a counterpart of the ideal $\mathcal E_G$. This can be done in six different ways using the notions of [generically] (left, right) Haar null set. For a Polish space $X$ by $\exp(X)$ we shall denote the space of all non-empty compact sets in $X$ endowed with the Vietoris topology, and by $P(X)$  the space of all Borel $\sigma$-additive probability measures on $X$, endowed with the weak-star topology.

\begin{definition} An analytic subset $A$ of a Polish group $G$ is called
\begin{itemize}
\item {\em left-Haar null} if there exists a measure $\mu\in P(G)$ such that $\mu(xA)=0$ for all $x\in G$;
\item {\em right-Haar null} if there exists a measure $\mu\in P(G)$ such that $\mu(Ay)=0$ for all $y\in G$;
\item {\em Haar null} if there exists a measure $\mu\in P(G)$ such that $\mu(xAy)=0$ for all $x,y\in G$;
\item {\em generically left-Haar null} if the set $\{\mu\in P(G):\forall x\in G\;\;\mu(xA)=0\}$ is comeager in $P(G)$;
\item {\em generically right-Haar null} if the set $\{\mu\in P(G):\forall y\in G\;\;\mu(Ay)=0\}$ is comeager in $P(G)$;
\item {\em generically Haar null} if the set $\{\mu\in P(G):\forall x,y\in G\;\;\mu(xAy)=0\}$ is comeager in $P(G)$.
\end{itemize}
\end{definition}
Haar null sets were introduced by Christensen \cite{Christ} who proved the following important fact:

\begin{theorem}[Christensen]\label{ch} If an analytic subset $A$ of a Polish abelian group $G$ is not Haar null, then the set $A^{-1}A$ is a neighborhood of the unit in $G$.
\end{theorem}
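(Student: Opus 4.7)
The plan is to prove the contrapositive: assuming $A^{-1}A$ is not a neighborhood of the identity $e$, I build a witnessing measure $\mu\in P(G)$ with $\mu(xA)=0$ for every $x\in G$, contradicting the hypothesis that $A$ is not Haar null. First I would reduce to a compact set, replacing $A$ by a compact subset $K\subseteq A$ that is still not Haar null (so that $K^{-1}K\subseteq A^{-1}A$ is closed and does not contain $e$ in its interior, while retaining the non-Haar-null hypothesis). This reduction rests on the inner regularity of Borel measures on Polish spaces together with the capacitability of analytic sets, and is the point that requires the most care; it is the main obstacle of the proof.

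The construction of $\mu$ is then a Cantor-type convolution. Fix a translation-invariant compatible metric on the abelian Polish group $G$. I would inductively build a sequence $(h_n)_{n\in\IN}$ in $G\setminus K^{-1}K$ converging to $e$ rapidly enough to be summable, subject to the \emph{dissociativity} condition that
\[
\prod_{i\in F}h_i^{\delta_i}\;\notin\;K^{-1}K\quad\text{for every finite nonempty }F\subset\IN\text{ and every }\delta\colon F\to\{-1,+1\}.
\]
At step $N+1$ this amounts to choosing $h_{N+1}$ in a neighborhood of $e$ avoiding finitely many closed translates of $K^{-1}K$ (one for each combination previously achieved) together with $K^{-1}K$ itself; since $K^{-1}K$ is closed and, by the inductive hypothesis, none of the offending translates contain $e$ in their interior, a suitable $h_{N+1}$ exists, and I would additionally impose rapid metric decay. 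The summability makes the map $\Phi\colon\{0,1\}^\IN\to G$, $\eta\mapsto\prod_n h_n^{\eta_n}$, a well-defined continuous injection onto a Cantor set in $G$; define $\mu$ as the pushforward through $\Phi$ of the fair Bernoulli product measure on $\{0,1\}^\IN$.

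Finally, I would verify that $\mu(xK)=0$ for every $x\in G$ by a Fubini/conditioning argument anchored on dissociativity. Fix $x\in G$ and a finite $F\subset\IN$; if two distinct $\eta,\eta'\in\{0,1\}^F$, when completed by the same tail $(\eta_j)_{j\notin F}$, both produced $\Phi(\eta)\in xK$, then $\prod_{i\in F}h_i^{\eta'_i-\eta_i}$ would be a nontrivial signed product of distinct $h_i$'s lying in $K^{-1}K$, contradicting the construction. Therefore, conditionally on the tail $(\eta_j)_{j\notin F}$, at most one assignment of $(\eta_i)_{i\in F}$ lands in $\Phi^{-1}(xK)$, whence $\mu(xK)\le 2^{-|F|}$. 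Letting $|F|$ range over arbitrarily large finite sets forces $\mu(xK)=0$. Since $x$ was arbitrary, $\mu$ witnesses that $K$, and hence $A$, is Haar null — contradicting the hypothesis and completing the proof. The combinatorial construction and its verification are routine once the reduction to compact $K$ is in place; the descriptive-set-theoretic reduction is where the real work lies.
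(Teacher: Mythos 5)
Your overall strategy --- build a Cantor--Bernoulli measure from a ``dissociated'' sequence escaping $A^{-1}A$ and kill all translates at once --- founders on the very step you flag as the main obstacle, and not merely because it is delicate: the reduction to a compact non-Haar-null $K\subseteq A$ is \emph{impossible} in general. In a non-locally-compact Polish abelian group (e.g.\ $\IR^\w$ or an infinite-dimensional separable Banach space) \emph{every} compact set is Haar null --- a theorem of Christensen from the same 1972 paper --- so no compact $K\subseteq A$ can retain the non-Haar-null hypothesis. Since the locally compact case is already the classical Steinhaus theorem, your reduction fails exactly where Christensen's result has content. Nor is compactness cosmetic in your argument: you need $K^{-1}K$ closed so that at stage $N+1$ the finitely many forbidden translates $gK^{-1}K$ (which, by symmetry and the inductive hypothesis, do not contain $e$) have open complement near $e$, leaving room to pick $h_{N+1}\notin K^{-1}K$. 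With $A$ merely analytic, $A^{-1}A$ need not be closed, and ``$A^{-1}A$ is not a neighborhood of $e$'' does not prevent $A^{-1}A$ together with finitely many of its translates from covering a ball around $e$; so the inductive choice of $h_{N+1}$ is unjustified. (A smaller slip: you say the offending translates do not contain $e$ \emph{in their interior}; a finite union of closed sets, none containing $e$ in its interior, can still cover a neighborhood of $e$. What saves the step, when these sets are closed, is that they do not contain $e$ at all.)

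The repair is to invert the logic, as in Christensen's original argument: the measure need not annihilate all translates by construction. Assuming $A^{-1}A$ is not a neighborhood of $e$, pick any $h_n\to e$ with $h_n\notin A^{-1}A$ and $\sum_n d(h_n,e)<\infty$ for a complete invariant metric $d$, and let $\mu$ be the push-forward of the fair coin measure $\lambda$ on $\{0,1\}^{\IN}$ under $\Phi(\eta)=\prod_n h_n^{\eta_n}$; no dissociativity is imposed. If $A$ is not Haar null, some translate satisfies $\lambda(\Phi^{-1}(xA))>0$, and by the martingale (density) theorem there is a cylinder $C$ on coordinates below $N$ with $\lambda(\Phi^{-1}(xA)\cap C)>\tfrac34\lambda(C)$. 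Flipping a single coordinate $n\ge N$ preserves both $\lambda$ and $C$, so some $\eta$ has $\Phi(\eta)$ and $\Phi(\sigma_n\eta)$ both in $xA$, whence $h_n^{\pm1}\in A^{-1}A$ --- contradicting the choice of $h_n$. Your Fubini/conditioning computation is correct for the measure you build, but the single-flip density argument is what lets one dispense with both the compact reduction and the dissociativity condition. (For the record, the paper does not prove this theorem; it cites it from Christensen's article.)
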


Generically Haar null sets were introduced by Dodos \cite{D1}. In \cite{D1} Dodos proved that each analytic generically Haar null subset of a Polish abelian group is meager. By \cite{D2}, a Polish abelian group $G$ is locally compact if and only if each closed Haar null set in $G$ is generically Haar null. The following fact was proved by Dodos in \cite{Dodos}.

\begin{theorem}[Dodos]\label{dod} If an analytic subset $A$ of a Polish group $G$ is not generically left-Haar null, then the set $A^{-1}A$ is not meager in $G$ and the set $A^{-1}AA^{-1}A$ is a neighborhood of the unit in $G$.
\end{theorem}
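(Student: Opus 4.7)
The plan is to parallel the classical Christensen argument behind Theorem~\ref{ch}, but with the single test measure replaced by a Baire-generic family of measures, and with the Piccard--Pettis theorem applied at the end.

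First I would translate the hypothesis into a non-meager family of test measures charging $A$. Since $A$ is not generically left-Haar null, the set
\[
W=\{\mu\in P(G):\exists\,x\in G\;\mu(xA)>0\}
\]
is non-meager in $P(G)$. Because $A$ is analytic, the map $(x,\mu)\mapsto\mu(xA)$ is universally measurable, and the sets $W_n=\{\mu:\exists\,x\in G\;\mu(xA)>1/n\}$ form an increasing cover of $W$; hence some $W_n$ is non-meager with the Baire property in $P(G)$. A Kuratowski--Ulam / Jankov--von Neumann selection argument inside an open cell of $P(G)\times G$ on which $W_n$ is comeager then produces a non-meager family $\F\subset P(G)$ of measures $\nu$ satisfying $\nu(A)>1/n$, after left-translating each selected $\mu$ by its selector.

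Next I would show that $A^{-1}A$ is non-meager in $G$. For any pair $\nu_1,\nu_2\in\F$ the convolution $\nu_1^{-1}*\nu_2\in P(G)$ satisfies $(\nu_1^{-1}*\nu_2)(A^{-1}A)\ge \nu_1(A)\,\nu_2(A)>1/n^2$. The key technical step is a Kuratowski--Ulam style lemma: if $F\subset G$ is meager, then $\{(\nu_1,\nu_2)\in P(G)\times P(G):(\nu_1^{-1}*\nu_2)(F)>0\}$ is meager in $P(G)\times P(G)$. Granting this lemma (it follows by writing the convolution as an iterated integral over left-translates of $F$, each of which remains meager), the meagerness of $A^{-1}A$ would force $\F\times\F$ to be meager in $P(G)\times P(G)$, contradicting the previous paragraph.

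The neighborhood assertion now follows from Piccard--Pettis. The set $B=A^{-1}A$ is analytic, being the continuous image of $A^{-1}\times A$ under group multiplication, and hence has the Baire property by Nikod\'ym's theorem \cite[4.9]{Todor}. Since $B$ is non-meager by the previous step, Theorem~\ref{t:PP} gives that $BB^{-1}=(A^{-1}A)\cdot(A^{-1}A)^{-1}=A^{-1}AA^{-1}A$ is a neighborhood of the identity in $G$.

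The main obstacle is the convolution Kuratowski--Ulam lemma in the second paragraph: one must combine joint Baire-measurability of $(x,\mu)\mapsto\mu(xA)$ for analytic $A$ with invariance of meagerness under left translation, and carry out the iterated-integral manipulation rigorously. This is exactly where the analyticity hypothesis on $A$ is essential and where Dodos's original argument is most delicate.
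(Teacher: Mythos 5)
The paper does not actually prove Theorem~\ref{dod}; it is quoted from \cite{Dodos}. The nearest argument in the paper is the proof of the category analogue, Theorem~\ref{t:gm}, and the comparison is instructive because the one idea your proposal lacks is exactly the idea that drives that proof. To be clear about what is right: your ``key technical lemma'' is true and provable just as you indicate --- for closed nowhere dense $F$ the set $\{(\nu_1,\nu_2):(\nu_1^{-1}*\nu_2)(F)<\e\}$ is open (upper semicontinuity of $\mu\mapsto\mu(F)$ on closed sets, plus continuity of inversion and convolution) and dense (perturb finitely supported measures so that no product $g_i^{-1}h_j$ lies in $F$), and one takes countable intersections --- and the concluding Piccard--Pettis step is correct and is exactly how the paper finishes Theorem~\ref{t:gm}.

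The genuine gap is in your first step, the production of a \emph{non-meager} family $\F\subset P(G)$ with $\nu(A)>1/n$ for all $\nu\in\F$. Two things go wrong. First, non-meagerness of the projection $W_n=\{\mu:\exists x\;\mu(xA)>1/n\}$ does not give you an open cell of $P(G)\times G$ on which $\{(\mu,x):\mu(xA)>1/n\}$ is comeager: the sections of that set over $\mu$ may all be singletons, in which case it is meager in the product even though its projection is comeager. Second, and more fundamentally, even with a Jankov--von Neumann selector $x(\mu)$ in hand, the family $\F=\{(x(\mu)^{-1})_*\mu\}$ is the image of a non-meager set under a Baire measurable map, and such an image can perfectly well be meager (a Borel bijection of a Polish space can carry a comeager set onto a meager one). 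Nothing in the proposal establishes that $\F$ is non-meager, and your contradiction requires $\F\times\F$ to be non-meager. The repair is to eliminate the translator without any selection, via the identity $(xA)^{-1}(xA)=A^{-1}A$: if $\mu(xA)>0$ for some $x$, then
$$(\mu^{-1}*\mu)(A^{-1}A)\;\ge\;(\mu\times\mu)(xA\times xA)\;=\;\mu(xA)^2\;>\;0,$$
so the entire non-meager set $W=\{\mu:\exists x\;\mu(xA)>0\}$ lies in $\{\mu:(\mu^{-1}*\mu)(A^{-1}A)>0\}$, and the diagonal version of your lemma (approximate by finitely supported measures whose atoms all have mass $<\e$, so that the diagonal terms $g_i^{-1}g_i=e$ contribute little) gives the contradiction directly. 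This is precisely the role of the computation $(xa)^{-1}(xb)=a^{-1}b\in A^{-1}A$ in the paper's proof of Theorem~\ref{t:gm}, where the generic test object is a compact set rather than a measure.
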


\begin{corollary}\label{c:dodos} Each non-open analytic subgroup of a Polish group is generically left-Haar null and generically right-Haar null.
\end{corollary}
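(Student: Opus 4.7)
The plan is to derive the corollary directly from Theorem~\ref{dod} together with the elementary fact that any subgroup of a topological group which contains a neighborhood of the unit is automatically open.

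First, I would handle the generically left-Haar null case by contraposition. Suppose $H$ is an analytic subgroup of a Polish group $G$ which is \emph{not} generically left-Haar null. Applying Theorem~\ref{dod} with $A:=H$, the set $H^{-1}HH^{-1}H$ contains a neighborhood $U$ of the unit of $G$. Because $H$ is a subgroup, the algebra collapses: $H^{-1}HH^{-1}H=H$, so $U\subset H$. Then $H=\bigcup_{h\in H}hU$ is a union of open sets, and hence $H$ is open.

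For the generically right-Haar null statement, I would reduce it to the left case via the inversion map rather than appealing to a (not explicitly stated) right analogue of Theorem~\ref{dod}. The inversion $\iota:g\mapsto g^{-1}$ is a self-homeomorphism of $G$ and induces a homeomorphism $\iota_*:P(G)\to P(G)$ of the weak-star space by $\iota_*(\mu)(B):=\mu(B^{-1})$. Since $H=H^{-1}$ as a subgroup, for every $y\in G$ one has $\iota_*(\mu)(Hy)=\mu(y^{-1}H^{-1})=\mu(y^{-1}H)$. Therefore the set
\[
\mathcal R:=\{\nu\in P(G):\forall y\in G\;\nu(Hy)=0\}
\]
is precisely the $\iota_*$-image of
\[
\mathcal L:=\{\mu\in P(G):\forall x\in G\;\mu(xH)=0\}.
\]
Because $\iota_*$ is a homeomorphism, comeagerness of $\mathcal L$ (which follows from the first paragraph applied to the non-open analytic subgroup $H$) transfers to comeagerness of $\mathcal R$, proving that $H$ is generically right-Haar null.

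No serious obstacle arises. The proof is a short contraposition wrapped around Theorem~\ref{dod}. The only technical points requiring any care are the subgroup identity $H^{-1}HH^{-1}H=H$ and the standard fact that inversion induces a weak-star homeomorphism of $P(G)$, both of which are routine.
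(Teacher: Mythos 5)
Your proof is correct and is essentially the argument the paper intends: the corollary is stated as an immediate consequence of Theorem~\ref{dod} by contraposition, using $H^{-1}HH^{-1}H=H$ and the fact that a subgroup containing a neighborhood of the unit is open. Your inversion argument for the right-Haar null case is a clean way to supply a detail the paper leaves implicit (Theorem~\ref{dod} is only stated for the left version), and it is correct as written.
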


Next, we consider ``Haar'' modifications of the notion of a meager sets, suggested by Darji \cite{Darji}.

\begin{definition}
We shall say that a subset $A$ of a topological group $G$ is
\begin{itemize}
\item {\em Haar-meager} if there is a continuous map $f:K\to X$ defined on a metrizable compact space $K$ such that for every $x,y\in G$ the set $f^{-1}(xAy)$ is meager in $K$;
\item {\em left-Haar-meager} if there is a continuous map $f:K\to X$ defined on a metrizable compact space $K$ such that for every $x\in G$ the set $f^{-1}(xA)$ is meager in $K$;
\item {\em right-Haar-meager} if there is a continuous map $f:K\to X$ defined on a metrizable compact space $K$ such that for every $y\in G$ the set $f^{-1}(Ay)$ is meager in $K$;
\item {\em generically Haar-meager} if the subset $\{K\in\exp(X):\forall x,y\in G\;\;K\cap(xAy)\in \M_K\}$ is comeager in $\exp(X)$;
\item {\em generically left-Haar-meager} if the subset $\{K\in\exp(X):\forall x\in G\;\;K\cap(xA)\in \M_K\}$ is comeager in $\exp(X)$;
\item {\em generically right Haar-meager} if the subset $\{K\in\exp(X):\forall y\in G\;\;K\cap(Ay)\in \M_K\}$ is comeager in $\exp(X)$.
\end{itemize}
\end{definition}
Haar-meager sets were introduces by Darji \cite{Darji}. The notion of a generically (left, right) Haar-meager set seems to be new (we intend to devote a paper \cite{B?} to studying this new notion). By \cite{Darji}, each Borel (left or right) Haar-meager set in a Polish group $G$ is meager in $G$.

\begin{proposition}\label{Hnul->Hmeg} Each closed (left, right) Haar null set in a Polish group is (left, right) Haar-meager.
\end{proposition}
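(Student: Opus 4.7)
The plan is to exploit inner regularity of Borel probability measures on the Polish group $G$ to produce a compact witness for Haar-meagerness from a witness measure for Haar-nullness. I will treat the left-Haar case; the arguments for the right-Haar and two-sided Haar cases are verbatim identical with $xA$ replaced by $Ay$ or $xAy$.

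Suppose $A\subset G$ is a closed left-Haar null set, so there is $\mu\in P(G)$ with $\mu(xA)=0$ for all $x\in G$. Since $G$ is Polish, $\mu$ is inner regular (Ulam), so there is a compact set $K_0\subset G$ with $\mu(K_0)>0$. Regard the restriction $\nu=\mu|_{K_0}$ as a finite positive Borel measure on the compact metrizable space $K_0$, and let $S=\supp(\nu)\subset K_0$. Then $S$ is a non-empty compact metrizable subset of $G$ on which $\nu$ has full support, meaning $\nu(U)>0$ for every non-empty open $U\subset S$.

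The map I take to witness left-Haar-meagerness is the inclusion $f:S\hookrightarrow G$. The required verification is that for each $x\in G$ the preimage $f^{-1}(xA)=S\cap xA$ is meager in $S$. Since $A$ is closed, $S\cap xA$ is closed in $S$; thus meagerness is equivalent to nowhere-density, i.e. to having empty interior in $S$. Assuming the contrary, there would be a non-empty open $U\subset S$ with $U\subset S\cap xA$. But then $\nu(U)\le\nu(S\cap xA)\le\mu(xA)=0$, which contradicts the full-support property of $\nu$ on $S$. Hence $S\cap xA$ is nowhere dense in $S$, which proves that $A$ is left-Haar-meager.

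I do not expect any genuine obstacle; the only subtlety is the step from ``$\mu$-null'' to ``nowhere dense'', and this is precisely what full support of $\nu=\mu|_{K_0}$ on $S$ buys us. The right-Haar and Haar cases go through by the same choice of $S$, since the hypothesis $\mu(Ay)=0$ (respectively $\mu(xAy)=0$) for all $x,y\in G$ gives $\nu(S\cap Ay)=0$ (respectively $\nu(S\cap xAy)=0$), and the same full-support argument turns this into nowhere-density in $S$.
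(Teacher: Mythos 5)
Your proof is correct and follows essentially the same route as the paper: both extract a compact set of positive measure by inner regularity, pass to the part on which the measure has full support (the paper removes the largest open null subset of $K$, you take $S=\supp(\mu|_{K_0})$ --- the same set), and then note that a closed non-meager subset of a compact set contains a non-empty open set, which would have positive measure, contradicting Haar-nullness.
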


\begin{proof} Assuming that a closed subset $A$ is a Polish group $G$ is Haar null, find a measure $\mu\in P(G)$ such that $\mu(xAy)=0$ for all $x,y\in G$. Since each Borel $\sigma$-additive measure on a Polish space is Radon (see \cite[7.1.7]{Bog}), there exists a compact set $K\subset G$ with $\mu(K)>0$. Let $U\subset K$ be the largest open subset of measure $\mu(U)=0$ in $K$. Then $\mu(K\setminus U)=\mu(K)>0$ and each non-empty relatively open subset $V$ of $K\setminus U$ has positive measure $\mu(V)$. Replacing $K$ by $K\setminus U$, we can assume that each non-empty open subset of $K$ has positive $\mu$-measure. In this case the compact set $K$ witnesses that the set $A$ is Haar-meager. Indeed, assuming that for some $x,y\in G$ the closed subset $K\cap xAy$ is non-meager in $K$, we conclude that it contains a non-empty open subset $V$ of $K$, which implies that $\mu(xAy)\ge\mu(K\cap xAy)\ge\mu(V)>0$. But this contradicts the choice of the measure $\mu$. By analogy we can prove the left and right versions of this proposition.
\end{proof}

Closed (left, right) Haar-meager sets in Polish groups can be characterized as closed sets which are not (left, right) prethick.

\begin{definition}
A subset $T$ of a topological group $G$ is called
\begin{itemize}
\item {\em thick} if for every compact set $K\subset G$ there are points $x,y\in G$ such that $K\subset xTy$;
\item {\em prethick} if for every compact set $K\subset G$ there is a finite set $F\subset G$ such that $K\subset FTF$;
 \item {\em left thick} if for every compact set $K\subset G$ there is a point $x\in G$ such that $K\subset xT$;
\item {\em left prethick} if for every compact set $K\subset G$ there is a finite set $F\subset G$ such that $K\subset FT$;
 \item {\em right thick} if for every compact set $K\subset G$ there is a point $y\in G$ such that $K\subset Ty$;
\item {\em right prethick} if for every compact set $K\subset G$ there is a finite set $F\subset G$ such that $K\subset TF$.
\end{itemize}
\end{definition}

\begin{proposition}\label{prethick} For a closed subset $A$ of a Polish group $G$ the following conditions are equivalent:
\begin{enumerate}
\item $A$ is (left, right) Haar-meager;
\item there is a compact subset $K\subset G$ such that for every $x,y\in G$ the set $K\cap xAy$ (resp. $K\cap xA$, $K\cap Ay$) is nowhere dense in $K$;
\item $A$ is not (left, right) prethick.
\end{enumerate}
\end{proposition}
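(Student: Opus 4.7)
The plan is to establish the equivalences by proving the cycle $(2)\Rightarrow(1)\Rightarrow(3)\Rightarrow(2)$, with the first two implications being straightforward and the last constituting the main technical work.

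For $(2)\Rightarrow(1)$, I would take the compact $K$ supplied by (2) and let $f\colon K\hookrightarrow G$ be the inclusion. Since $A$ is closed, $f^{-1}(xAy)=K\cap xAy$ is closed in $K$, and by hypothesis it is nowhere dense, hence meager; so $f$ directly witnesses Haar-meagerness of $A$. The same reasoning yields the left and right variants.

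For $(1)\Rightarrow(3)$, I would argue by contrapositive. Suppose $A$ is prethick and let $f\colon K\to G$ be any continuous map from a compact metric space $K$. Applying prethickness to the compact set $f(K)\subset G$ yields a finite $F\subset G$ with $f(K)\subset FAF$, whence $K=\bigcup_{x,y\in F}f^{-1}(xAy)$ is a finite union of closed subsets of the compact (hence Baire) space $K$. The Baire category theorem then supplies some $f^{-1}(xAy)$ with non-empty interior, which is not meager in $K$; so $f$ fails to witness Haar-meagerness, and since $f$ was arbitrary, $A$ is not Haar-meager. The one-sided versions are identical.

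The substantive step is $(3)\Rightarrow(2)$. Starting from a compact $K_0\subset G$ witnessing non-prethickness (no finite $F$ satisfies $K_0\subset FAF$), I would perform a Cantor--Bendixson-style derivation, setting
\[
K^{(\alpha+1)}=K^{(\alpha)}\setminus\bigcup\bigl\{V\subset K^{(\alpha)}:V\text{ is open in }K^{(\alpha)},\;V\subset xAy\text{ for some }x,y\in G\bigr\},
\]
with $K^{(\lambda)}=\bigcap_{\alpha<\lambda}K^{(\alpha)}$ at limits. Second countability of $K_0$ forces stabilization at a countable ordinal $\beta$, and by construction the stable set $K^{(\beta)}$ has no non-empty relatively open subset lying in a single translate $xAy$, which is precisely condition (2). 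The hard part is verifying that $K^{(\beta)}\neq\emptyset$. Non-prethickness propagates through successor stages, since if $K^{(\alpha+1)}\subset FAF$ and a removable $V\subset xAy$ is added back, then $K^{(\alpha)}\subset(F\cup\{x,y\})A(F\cup\{x,y\})$; the main obstacle is the limit stages, where a delicate compactness argument is needed to consolidate a potentially countable collection of removed open pieces (each contained in some single translate of $A$) into a finite cover of $K_0$ by translates, contradicting the choice of $K_0$. The one-sided variants follow the same scheme, with $xAy$ replaced throughout by $xA$ or $Ay$.
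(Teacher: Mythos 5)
Your implications $(2)\Rightarrow(1)$ and $(1)\Rightarrow(3)$ are correct and essentially identical to the paper's. The genuine gap is in $(3)\Rightarrow(2)$: the strategy of refining the given witness $K_0$ of non-prethickness by a Cantor--Bendixson derivation cannot be repaired, because the compact set required by $(2)$ need not be a subset of $K_0$ at all. Concretely, take $G=\IR$, $A=\{0\}$ and $K_0=\{0\}\cup\{1/n:n\in\IN\}$. Then $K_0$ witnesses that $A$ is not prethick (every set $FAF$ is finite, while $K_0$ is infinite), but your derivation removes all the isolated points $1/n$ at the very first step (each singleton is relatively open and lies in the single translate $\tfrac1n+A$), then removes $\{0\}$, and terminates at the empty set. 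Note that your propagation argument already breaks at successor stages, not only at limits: in one successor step infinitely many removable pieces $V$ are deleted simultaneously, each requiring its own pair $(x_V,y_V)$, so ``adding them back'' inflates $F$ to an infinite set. Moreover, no compact subset of $K_0$ whatsoever can witness $(2)$, since every nonempty compact subset of $K_0$ is countable and hence has an isolated point $p$, and then its intersection with the translate $p+A$ is the non-nowhere-dense set $\{p\}$. The correct witness for $(2)$ (for instance $[0,1]$) has to be manufactured from scratch.

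This is what the paper's proof does: it shows $\neg(2)\Rightarrow\neg(3)$ directly by building, for an \emph{arbitrary} compact $K\subset G$, a new compact set $\bar\Sigma$ assembled from smaller and smaller translated copies of the sets $\sigma_l=\bigcup_{m\ge l}\bigcup_{C\in\C_m}p_C^{-1}C$, where $\C_m$ is a finite cover of $K$ by closed sets of diameter $\le 2^{-m}$ with chosen points $p_C\in C$. The inductive conditions guarantee the self-similarity property that every nonempty relatively open subset of $\bar\Sigma$ contains a translate $z\sigma_l$ for some $l$. If $(2)$ fails for $\bar\Sigma$, then some $\bar\Sigma\cap xAy$ contains a nonempty open subset of $\bar\Sigma$, hence some $z\sigma_l$, and then $K=\bigcup\C_l\subset\bigcup_{C\in\C_l}p_Cz^{-1}xAy\subset FAF$ for the finite set $F=\{y\}\cup\{p_Cz^{-1}x:C\in\C_l\}$, so $A$ is prethick. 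If you want to salvage your write-up, you must replace the derivation on $K_0$ by such a construction of a fresh compact set.
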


\begin{proof} The implication $(2)\Ra(1)$ is trivial. To see that $(1)\Ra(3)$, assume that $A$ is (left, right) Haar meager and find a continuous map $f:K\to G$ defined on a compact metrizable space $K$ such that for every $x,y\in G$ the preimage $f^{-1}(xAy)$ (resp. $f^{-1}(xA)$, $f^{-1}(Ay)$) is meager in $K$. Then for every finite subset $F\subset G$ the preimage $f^{-1}(FAF)=\bigcup_{x,y\in F}f^{-1}(xAy)$ (resp. $f^{-1}(FA)$, $f^{-1}(AF)$) is meager in $K$, which implies that the compact subset $f(K)$ of $G$ is not contained in the set $FAF$ (resp. $FA$, $AF$) and hence witnesses that $A$ is not prethick.
\smallskip

To prove that $(3)\Ra(2)$ we shall prove that the negation of $(2)$ implies that $A$ is (left, right) thick. We shall prove the two-sided version of this implication (the left and right versions can be proved by analogy). Fix a compact set $K\subset G$ and a complete metric $d$ generating the topology of the Polish space $G$. For a point $x\in G$, set $A\subset G$, and positive number $\e$ put $B(x;\e)=\{g\in G:d(g,x)<\e\}$ and $B(A;\e)=\bigcup_{a\in A}B(a;\e)$.

For every $n\in\w$ choose a finite cover $\C_n$ of $K$ by non-empty closed sets of $d$-diameter $\le 2^{-n}$. In each set $C\in\C_n$ pick up a point $p_C$. For every $n\in\w$ consider the compact set $\sigma_n=\bigcup_{m\ge n}\bigcup_{C\in\C_m}p_C^{-1}C$ and observe that the sequence $(\sigma_n)_{n\in\w}$ converges to the unit $1_G$ of $G$ in the sense that each neighborhood $U\subset G$ of $1_G$ contains all but finitely many sets $\sigma_n$, $n\in\w$. By induction we shall construct a sequence $(\Sigma_n)_{n\in\w}$ of non-empty compact sets in $K$ satisfying the following conditions for every $n\in\w$:
\begin{itemize}
\item[$(\dag_n)$] $\Sigma_{n+1}\subset B(\Sigma_n;2^{-n})$;
\item[$(\ddag_n)$] any open set $U\subset \Sigma_{n+1}$ meeting the set $\Sigma_n$ contains the set $x\cdot \sigma_l$ for some $x\in \Sigma_n$ and some $l\in\w$.
\end{itemize}
We start the inductive construction letting $\Sigma_{-1}=\emptyset$ and $\Sigma_0=\{1_G\}$. Assume that for some $n\in\w$ the set $\Sigma_{n}$ has been constructed. Fix a countable dense set $\{x_m\}_{m\in\w}$ in $\Sigma_n$. For every $m\in\w$ find a number $l(m)\in\w$ such that $x_m\cdot \sigma_{l(m)}\subset B(x_m;2^{-n-m})$. Such number $l(m)$ exists since the sequence $(\sigma_l)_{l\in\w}$ converges to $1_G$. Then $\Sigma_{n+1}=\Sigma_n\cup\bigcup_{m\in\w}x_m\sigma_{l(m)}$ is compact and satisfies the conditions $(\dag_{n+1})$ and $(\ddag_{n+1})$.

After completing the inductive construction, consider the closure $\bar \Sigma$ of the union $\Sigma=\bigcup_{n\in\w}\Sigma_n$. The conditions $(\dag_n)$, $n\in\w$, guarantee that the set $\Sigma$ is totally bounded in the complete metric space $(G,d)$ and hence $\bar \Sigma$ is compact. By the negation of (2), there exist points $x,y\in G$ such that the set $\bar \Sigma\cap xAy$ is not nowhere dense in $\bar\Sigma$ and hence contains a non-empty open subset $U$ of $\bar\Sigma$. By the conditions $(\ddag_n)$, $n\in\w$, the set $U$ contains a shift $z\sigma_l$ of the set $\sigma_l$ for some $l\in\w$. Then $$
xAy\supset U\supset z\sigma_l\supset\bigcup_{C\in\C_l}zp_C^{-1}C$$and
$K=\bigcup\C_l\subset \bigcup_{C\in\C_l}p_Cz^{-1}xAy\subset FAF$ for the finite set $F=\{y\}\cup\{p_Cz^{-1}x:C\in\C_l\}$, witnessing that the set $A$ is prethick.
\end{proof}

The following Steinhaus-type property of Haar-meager sets was proved by Jablo\'nska \cite{Jab}.

\begin{theorem}[Jablo\'nska] If a Borel subset $A$ of a Polish abelian group $G$ is not Haar-meager, then $A^{-1}A$ is a neighborhood of zero.
\end{theorem}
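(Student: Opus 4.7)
The plan is a contradiction argument modeled on the Steinhaus/Piccard--Pettis/Christensen template, with a Cantor witness to Haar-meagerness. Assuming $A^{-1}A$ fails to be a neighborhood of the unit $1_G$, I would fix, via Birkhoff--Kakutani, a compatible translation-invariant metric $\rho$ on the Polish abelian group $G$ and pick a sequence $(y_n)_{n\in\IN}$ in $G\setminus A^{-1}A$ converging to $1_G$ with $\rho(y_n,1_G)<2^{-n}$; in abelian notation the condition $y_n\notin A^{-1}A$ is just $y_nA\cap A=\emptyset$. Invariance of $\rho$ and the tail estimate $\sum_{n>N}\rho(y_n,1_G)<2^{-N}$ would render the partial products $p_N(s)=y_1^{s_1}\cdots y_N^{s_N}$ Cauchy in $(G,\rho)$ for every $s=(s_n)\in 2^\w$, so I can set $\phi(s)=\lim_N p_N(s)$ and obtain a continuous map $\phi\colon 2^\w\to G$ from the Cantor space.

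The claim I would then prove is that $\phi$ witnesses the Haar-meagerness of $A$, contradicting the hypothesis. Since $G$ is abelian, it suffices to verify that $\phi^{-1}(xA)$ is meager in $2^\w$ for every $x\in G$. Supposing toward a contradiction that $\phi^{-1}(xA)$ is non-meager for some $x$: since $A$ is Borel and $\phi$ continuous, $\phi^{-1}(xA)$ is Borel and hence has the Baire property, so there exist a finite string $\sigma\in 2^{<\w}$ and a meager set $M\subset 2^\w$ with $[\sigma]\setminus M\subset\phi^{-1}(xA)$, where $[\sigma]$ denotes the basic clopen cylinder determined by $\sigma$.

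The heart of the argument is a flipping argument in the Cantor space. For any $n>|\sigma|$ the involution $\tau_n\colon 2^\w\to 2^\w$ that flips only the $n$-th coordinate is a self-homeomorphism of $[\sigma]$ and preserves meagerness, so $[\sigma]\setminus(M\cup\tau_n(M))$ is comeager in the Baire space $[\sigma]$, hence non-empty. Picking $s$ in it places both $s$ and $\tau_n(s)$ in $[\sigma]\setminus M\subset\phi^{-1}(xA)$, and so
\[
\phi(\tau_n(s))\,\phi(s)^{-1}\in(xA)(xA)^{-1}=A^{-1}A.
\]
On the other hand, $\tau_n$ changes only the $n$-th factor in the (abelian) product, so the left-hand side equals $y_n^{\pm 1}$; since $A^{-1}A$ is symmetric, this forces $y_n\in A^{-1}A$, contradicting the choice of $y_n$. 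Consequently $\phi^{-1}(xA)$ is meager for every $x$, and $\phi$ witnesses Haar-meagerness as promised.

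The main technical obstacle is the one glossed over above: ensuring that the Cantor products $\phi(s)$ really do converge in the Polish topology. A clean remedy is to start with a \emph{complete} compatible invariant metric on $G$, which exists on every Polish abelian group by a theorem of Klee; the Cauchy estimate then immediately yields convergence. Alternatively one can bypass completeness by taking the compact witness to be the closure in $G$ of the countable set $\{y_{n_1}\cdots y_{n_k}:n_1<\cdots<n_k\}$, whose compactness follows from the rapid decay of $(y_n)$, and then running the same flipping argument on the natural surjection from $2^\w$ onto this closure.
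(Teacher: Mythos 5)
The paper does not actually prove this statement: it is quoted as Jab\l{}o\'nska's theorem with only the citation \cite{Jab}, so there is no in-paper argument to measure yours against. Judged on its own, your proof is correct, and it reconstructs what is essentially the classical Christensen-type argument transferred from measure to category: the Cantor map $\phi(s)=\lim_N y_1^{s_1}\cdots y_N^{s_N}$ built from a null sequence $(y_n)$ in $G\setminus A^{-1}A$, localization of the Baire property of the Borel set $\phi^{-1}(xA)$ to a cylinder $[\sigma]$, and the coordinate-flipping involution $\tau_n$ producing $y_n^{\pm1}\in (xA)(xA)^{-1}=A^{-1}A$, contradicting the choice of $y_n$. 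All steps check out; in particular, for abelian $G$ it does suffice to verify $\phi^{-1}(xA)\in\M_{2^\w}$ for all $x$, since $xAy=xyA$. The convergence issue you flag at the end is in fact a non-issue: on an abelian Polish group the uniformity of any compatible invariant metric coincides with the two-sided (Raikov) uniformity, in which every Polish group is complete, so the Cauchy partial products converge automatically; the appeal to Klee's theorem is a fine alternative, as is your compact-closure workaround. It is worth noting that your technique is genuinely different from the one the paper uses for the neighbouring, weaker Theorem~\ref{t:gm} on generically left Haar-meager sets, which goes through the Mycielski--Kuratowski theorem in the hyperspace $\exp(X)$ and only yields that $A^{-1}A$ is non-meager; your flipping argument gives the stronger ``neighborhood of zero'' conclusion but uses commutativity essentially, to identify $\phi(\tau_n(s))\phi(s)^{-1}$ with $y_n^{\pm1}$.
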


Generically left Haar-meager sets have a weaker property.

\begin{theorem}\label{t:gm} If an (analytic) subset $A$ of a Polish group $X$ is not generically left Haar-meager, then $A^{-1}A$ is not meager in $X$ (and hence $A^{-1}AA^{-1}A$ is a neighborhood of the unit in $X$).
\end{theorem}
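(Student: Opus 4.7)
The plan is to prove the theorem via its contrapositive: assuming $A^{-1}A$ is meager in $X$, I will show that $A$ is generically left Haar-meager, contradicting the hypothesis. The key tool is the classical Mycielski theorem on independent Cantor sets with respect to a meager binary relation. Throughout I assume $X$ has no isolated points; otherwise $X$ is countable and discrete, in which case $A^{-1}A$ meager forces $A=\emptyset$ and the statement holds trivially.

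First I would transfer the meagerness of $A^{-1}A$ to the product $X\times X$ via the continuous map $\psi\colon X\times X\to X$, $\psi(x,y)=x^{-1}y$. Writing $A^{-1}A\subset\bigcup_n F_n$ with each $F_n$ closed and nowhere dense in $X$, each preimage $\psi^{-1}(F_n)$ is closed, and since every translate $xF_n$ remains nowhere dense, a direct check shows $\psi^{-1}(F_n)$ is nowhere dense in $X\times X$. Hence $\psi^{-1}(A^{-1}A)$ is meager in $X\times X$.

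Second, I would apply Mycielski's theorem: since $\psi^{-1}(A^{-1}A)$ is meager in $X\times X$, the family
$$\mathcal G=\{C\in\exp(X):(C\times C)\cap\psi^{-1}(A^{-1}A)\subset\Delta_X\}$$
is a comeager $G_\delta$ subset of $\exp(X)$, and its generic element is a Cantor set (the family of Cantor sets being itself comeager in $\exp(X)$ when $X$ has no isolated points). This is the step I expect to require the most care: the density part demands constructing, inside any basic Vietoris open set, a binary tree of shrinking closed balls whose pairwise off-diagonal images under $\psi$ simultaneously avoid finitely many of the closed nowhere dense sets $\psi^{-1}(F_n)$ at each level.

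Finally I would verify that any $C\in\mathcal G$ witnesses the generic left Haar-meagerness of $A$. If $k_1,k_2\in C\cap xA$ were distinct for some $x\in X$, writing $k_i=xa_i$ with $a_i\in A$ would give $k_1^{-1}k_2=a_1^{-1}a_2\in A^{-1}A$, contradicting the defining property of $C$. Hence $|C\cap xA|\le 1$ for every $x$, and a singleton is nowhere dense, hence meager, in the perfect set $C$. This proves the contrapositive. For the parenthetical conclusion, assuming $A$ analytic makes $A^{-1}A$ analytic, which therefore has the Baire property by Nikod\'ym's theorem; since $(A^{-1}A)^{-1}=A^{-1}A$, applying the Piccard--Pettis Theorem~\ref{t:PP} to the non-meager Baire set $A^{-1}A$ gives $A^{-1}A\cdot(A^{-1}A)^{-1}=A^{-1}AA^{-1}A$ as a neighborhood of the unit.
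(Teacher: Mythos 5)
Your proposal is correct and follows essentially the same route as the paper's proof: pass to the contrapositive, pull the meager set $A^{-1}A$ back to $X\times X$ via $(x,y)\mapsto x^{-1}y$, invoke the Mycielski--Kuratowski theorem to get a comeager family of compacta meeting each left translate $xA$ in at most one point, intersect with the comeager family of perfect compacta, and finish the analytic case with Nikod\'ym and Piccard--Pettis. The only cosmetic differences are that the paper deduces nowhere density of the preimages from the openness of the map $(x,y)\mapsto x^{-1}y$ and simply cites \cite[19.1]{Ke} rather than re-proving the Mycielski density argument by hand.
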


\begin{proof} To derive a contradiction, assume that the set $A^{-1}A$ is meager in $X$. Then we can find a meager $F_\sigma$-set $F\subset X$ containing $A^{-1}A$. Consider the continuous map $\mu:X\times X\to X$, $\mu:(x,y)\mapsto x^{-1}y$, and observe that it is open. This implies that $\mu^{-1}(F)$ is a meager $F_\sigma$-set in $X\times X$ and its complement $G=(X\times X)\setminus f^{-1}(F)$ is a dense $G_\delta$-set in $X\times X$.
For a subset $K\subset X$ put $(K)^2=\{(x,y)\in K\times K:x\ne y\}\subset X\times X$.
By Mycielski-Kuratowski Theorem \cite[19.1]{Ke}, the set $H=\{K\in\exp(X):(K)^2\subset G\}$ is a dense $G_\delta$-set in the hyperspace $\exp(X)$. Since the Polish group $G$ contains a non-empty meager subset, it is not discrete and hence contains no isolated points.  By \cite[4.31]{Ke}, the subset $P\subset \exp(X)$ consisting of compact subsets without isolated points is a dense $G_\delta$-set in $X$. Then $H\cap P$ is a dense $G_\delta$-set in $\exp(X)$. We claim that for each compact subset $K\in H\cap P$ and every $x\in A$ the set $K\cap xA$ contains at most one point and hence is meager in $K$.
Assuming that $K\cap xA$ contains two distinct points $xa,xb$, we conclude that $\mu(xa,xb)=a^{-1}b\in A^{-1}A$ and hence $(xa,xb)\in (K)^2\cap f^{-1}(F)$, which contradicts the choice of $K\in H$.
Therefore $H\cap P\subset \{K\in\exp(X):\forall x\in G\;\;K\cap xA\in\M_K\}$, which implies that $A$ is generically left Haar-meager. But this contradicts the choice of $A$.

Now assume that $A$ is analytic. Then the set $A^{-1}A$ is analytic and non-meager in the Polish group $G$. By Piccard-Pettis Theorem \cite[9.9]{Ke}, $A^{-1}AA^{-1}A$ is a neighborhood of the unit in $G$.
\end{proof}

Theorems~\ref{dod} and \ref{t:gm} imply the following partial answer to Problem~\ref{probmain}.

\begin{corollary}\label{c:my} Every analytic non-open subgroup $H$ of a Polish group $G$ is generically left-Haar-meager and generically right Haar-meager in $G$.
\end{corollary}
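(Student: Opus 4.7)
The plan is to argue by contradiction, using Theorem~\ref{t:gm} in place of the Piccard--Pettis theorem. Suppose $H$ is an analytic subgroup of the Polish group $G$ which fails to be generically left Haar-meager. Since $H$ is analytic, Theorem~\ref{t:gm} applies and yields that $H^{-1}HH^{-1}H$ is a neighborhood of the unit in $G$. But $H$ is a subgroup, so $H^{-1}HH^{-1}H\subset H$, which means $H$ itself contains a neighborhood of the unit and is therefore open in $G$. This contradicts the hypothesis that $H$ is not open, so $H$ must be generically left Haar-meager.

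For the right-sided assertion, I would repeat the same argument using the obvious right-sided analogue of Theorem~\ref{t:gm}: if an analytic set $A\subset G$ is not generically right Haar-meager, then $AA^{-1}$ is non-meager in $G$ and (by the Piccard--Pettis Theorem~\ref{t:PP} applied to the analytic, hence Baire, set $AA^{-1}$) $AA^{-1}AA^{-1}$ is a neighborhood of the unit. The proof of this right-sided version is literally the same as the proof of Theorem~\ref{t:gm}, interchanging the roles of left and right translations (i.e.\ considering the map $(x,y)\mapsto xy^{-1}$ instead of $(x,y)\mapsto x^{-1}y$). Applied to the subgroup $H$, it gives $H\supset HH^{-1}HH^{-1}$, so $H$ would be open, again a contradiction.

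The argument is essentially a one-line deduction from Theorem~\ref{t:gm} and the definition of a subgroup. The only mildly non-trivial step is to note that the right-sided version of Theorem~\ref{t:gm} is available and has the same proof; no new obstacle appears.
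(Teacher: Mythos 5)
Your proof is correct and is essentially the paper's own argument: Corollary~\ref{c:my} is stated there as an immediate consequence of Theorem~\ref{t:gm}, using that $H^{-1}HH^{-1}H=H$ for a subgroup and that a subgroup containing a neighborhood of the unit is open. Your handling of the right-sided case (rerunning the proof of Theorem~\ref{t:gm} with $(x,y)\mapsto xy^{-1}$) is also fine; one could alternatively just note that $H=H^{-1}$ for a subgroup, so the left and right versions coincide.
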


\section{Three counterexamples}\label{s4}

In this section we present three counterexamples to possible extensions of Theorem~\ref{Lacz}.
First of them will show that in contrast to  Theorem~\ref{Lacz},  Corollaries~\ref{c:dodos} and \ref{c:my} (treating analytic subgroups) are not true for  subsemigroups of Polish groups.

\begin{example}\label{ex1} The closed nowhere dense subsemigroup $\IR_+^\w$ is thick in the Polish abelian group $\IR^\w$ and hence $H$ is neither Haar-null nor Haar-meager in $\IR^\w$.
\end{example}

The second our example shows that ``two-sided'' versions Corollaries~\ref{c:dodos} and \ref{c:my} are false.

\begin{example}\label{ex2} There exists a Polish group $G$ containing a closed nowhere dense thick subgroup $H$. This subgroup is neither Haar-null nor Haar-meager. On the other hand, $H$ is generically left (and right) Haar-null and generically left (and right) Haar-meager.
\end{example}

\begin{proof} Let $X$ be a countable set and $Z\subset X$ be a proper countable subset of $X$.
Let $FS_X$ be the discrete group of finitely supported permutations of $X$ and $FS_Z$ be the subgroup of $FS_X$ consisting of permutations $f:X\to X$ with finite support $\supp(f)=\{x\in X:f(x)\ne x\}\subset Z$. Consider the Polish group $G=FS_X^\w$ and closed nowhere dense subgroup $H=FS_Z^\w$ of $G$. It is easy to show that for any finite subset $A\subset FS_X$ there is a permutation $g\in FS_X$ such that $gAg^{-1}\subset FS_Z$. This means that the subgroup $FS_Z$ is thick in $FS_X$. This property implies that the countable product $H=FS_Z^\w$ is thick in the Polish group $G=FS_Z^\w$. By Propositions~\ref{prethick} and \ref{Hnul->Hmeg}, the closed subgroup $H$ is neither Haar-meager nor Haar-null in $G$.

By Corollaries~\ref{c:dodos} and \ref{c:my} the nowhere dense subgroup $H$ is generically left (and right) Haar-null and generically left (and right) Haar-meager in $G$.
\end{proof}

Our final example shows that Theorem~\ref{Lacz} is not true for non-locally compact Polish groups.
A topological space $X$ is defined to be {\em $\sigma$-Polish} if $X$ can be written as a countable union $X=\bigcup_{n\in\w}X_n$ of closed Polish subspaces of $X$. It is easy to see that each $\sigma$-Polish subspace $X$ of a Polish space $Y$ can be written as the difference $A\setminus B$ of two $F_\sigma$-sets in $Y$. Consequently, $X$ is of Borel classes $F_{\sigma\delta}$ and  $G_{\delta\sigma}$.

\begin{example}\label{e:counter} The Polish group $G=\IZ^\w$ contains a meager $\sigma$-Polish subgroup $H\subset\{0\}\cup\{z\in \IZ^\w:|z^{-1}(0)|<\w\}$ which cannot be covered by countably many closed Haar-meager sets in $G$. The subgroup $H$ is generated by a $G_\delta$-subset $P\subset \IZ^\w$ such that for any non-empty relatively open set $U\subset P$ its closure $\bar U$  is prethick in $G$.
\end{example}

\begin{proof} In the construction of the $G_\delta$-set $P$ we shall use the following elementary lemma.

\begin{lemma}\label{l1} There exists an infinite family $\Tau$ of thick subsets of $\IZ$ and an increasing number sequence $(\Xi_m)_{m\in\w}$ such that for any positive numbers $n\le m$, non-zero integer numbers $\lambda_1,\dots,\lambda_n\in[-m,m]$, pairwise distinct sets $T_1,\dots,T_n\in\Tau$ and points $x_i\in T_i$, $i\le n$, such that $\{x_1,\dots,x_n\}\not\subset [-\Xi_m,\Xi_m]$ the sum\break $\lambda_1x_1+\dots+\lambda_n x_n$ is not equal to zero.
\end{lemma}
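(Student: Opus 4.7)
The plan is to realize each element of $\Tau$ as a disjoint union of long intervals (``blocks'') placed at extremely rapidly growing positions in $\IN\subset\IZ$, with the blocks distributed among the members of $\Tau$ so that blocks belonging to distinct members are widely separated. The rapid growth will guarantee that in any bounded integer linear combination whose summands come from distinct members of $\Tau$, the contribution of the single ``top'' block dominates the rest, so cancellation is impossible once at least one of the points lies far enough out.

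Concretely, fix block lengths $L_s=s$ for $s\in\IN$ and recursively choose positions $a_1=1$, $a_s=s^2(a_{s-1}+L_{s-1})+1$ for $s\ge 2$; these grow super-exponentially, and the blocks $B_s=[a_s,a_s+L_s]\cap\IZ$ are pairwise disjoint subsets of $\IN$. Partition $\IN$ into pairwise disjoint infinite subsets $(S_k)_{k\in\IN}$, put $T_k=\bigcup_{s\in S_k}B_s$, and let $\Tau=\{T_k:k\in\IN\}$. Each $T_k$ contains blocks $B_s$ of unbounded length (as $S_k$ is infinite and $L_s=s\to\infty$), so $T_k$ is thick in $\IZ$; the sets $T_k$ are pairwise disjoint and hence pairwise distinct. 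Set $\Xi_m=a_m+L_m$ (and $\Xi_0=0$); since $a_s$ and $L_s$ are increasing in $s$, the sequence $(\Xi_m)$ is increasing, $B_1\cup\cdots\cup B_m\subset[-\Xi_m,\Xi_m]$, and consequently every point of $\bigcup_{k}T_k$ lying outside $[-\Xi_m,\Xi_m]$ must belong to some $B_s$ with $s>m$.

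For the non-cancellation, fix $n\le m$, nonzero $\lambda_i\in[-m,m]$, pairwise distinct $T_{k_1},\dots,T_{k_n}\in\Tau$, and $x_i\in T_{k_i}$ with $\{x_1,\dots,x_n\}\not\subset[-\Xi_m,\Xi_m]$. Each $x_i$ lies in a unique block $B_{s_i}$ with $s_i\in S_{k_i}$, and since the $S_{k_i}$ are pairwise disjoint, the indices $s_1,\dots,s_n$ are pairwise distinct. Let $s^*=\max_i s_i$, attained at some index $i^*$; the hypothesis forces $s^*>m$. If $\sum_i\lambda_ix_i=0$, then
\[
a_{s^*}\le|\lambda_{i^*}x_{i^*}|=\Bigl|\sum_{i\ne i^*}\lambda_ix_i\Bigr|\le m(n-1)(a_{s^*-1}+L_{s^*-1})\le m^2(a_{s^*-1}+L_{s^*-1}),
\]
contradicting $a_{s^*}>(s^*)^2(a_{s^*-1}+L_{s^*-1})\ge(m+1)^2(a_{s^*-1}+L_{s^*-1})>m^2(a_{s^*-1}+L_{s^*-1})$. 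The delicate point is the calibration of $\Xi_m$: setting it to the top of the $m$-th block ensures that escaping $[-\Xi_m,\Xi_m]$ automatically pushes the top block index past $m$, which in turn activates the $s^2>m^2$ growth margin and lets it beat both the coefficient bound $m$ and the term-count bound $n\le m$ simultaneously.
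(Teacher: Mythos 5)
Your proof is correct and follows essentially the same strategy as the paper's: both realize the members of $\Tau$ as unions of intervals of unboundedly growing length placed at super-rapidly growing positions (the paper uses $[2^{2^a}-a,2^{2^a}+a]$, you build the growth into the recursion $a_s=s^2(a_{s-1}+L_{s-1})+1$), distribute the blocks via a partition of the index set into infinitely many infinite pieces, and conclude by letting the unique top block dominate the remaining $m(n-1)\le m^2$-bounded contribution. Your calibration $\Xi_m=a_m+L_m$ plays exactly the role of the paper's $\Xi_m=2^{2^{\xi_m}}+\xi_m$, so no further changes are needed.
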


\begin{proof} For every $m\in\w$ let $\xi_m\in\w$ be the smallest number such that $2^{2^{x}}-x>m^2(2^{2^{x-1}}+x)$ for all $x\ge \xi_m$, and put $\Xi_m=2^{2^{\xi_m}}+\xi_m$.  Choose an infinite family $\A$ of pairwise disjoint infinite subsets of $\IN$ and for every $A\in\A$ consider the thick subset $T_A=\bigcup_{a\in A}[2^{2^a}-a,2^{2^a}+a]$ of $\IZ$. Here by $[a,b]$ we denote the segment $\{a,\dots,b\}$ of integers. We claim that the family $\Tau=\{T_A\}_{A\in\A}$ and the sequence $(\Xi_m)_{m\in\w}$ have the required property.

Take any positive numbers $n\le m$, non-zero-integer numbers $\lambda_1,\dots,\lambda_n\in[-m,m]$, pairwise distinct sets $T_1,\dots,T_n\in\Tau$ and points $x_1\in T_1,\dots,x_n\in T_n$ such that $\{x_1,\dots,x_n\}\not\subset [-\Xi_m,\Xi_m]$. For every $i\le n$ find an integer number $a_i$ such that $x_i=2^{2^{a_i}}+\e_i$ for some $\e_i\in[-a_i,a_i]$. Since the family $\A$ is disjoint and the sets $T_1,\dots,T_n$ are pairwise distinct, the points $a_1,\dots,a_n$ are pairwise distinct, too. Let $j$ be the unique number such that $a_j=\max\{a_i:1\le i\le n\}$. Taking into account that $\{x_1,\dots,x_n\}\not\subset [0,\Xi_m]=[0,2^{2^{\xi_m}}+\xi_m]$, we conclude that $2^{2^{a_j}}+a_j\ge 2^{2^{a_j}}+\e_j=x_j>2^{2^{\xi_m}}+\xi_m$ and hence $a_j>\xi_m$. The definition of the number $\xi_m$ guarantees that $2^{2^{a_j}}-a_j>m^2(2^{2^{a_j-1}}+a_j)$ and hence
$$|\lambda_jx_j|\ge x_j=2^{2^{a_j}}+\e_j\ge 2^{2^{a_j}}-a_j>m^2(2^{2^{a_j-1}}+a_j)> \sum_{i\ne j}|\lambda_i|(2^{2^{a_i}}+a_i)\ge |\sum_{i\ne j}\lambda_ix_i|,$$and hence $\sum_{i=1}^n\lambda_ix_i\ne0$.
\end{proof}

Now we are ready to start the construction of the $G_\delta$-set $P\subset\IZ^\w$. This construction will be done by induction on the tree $\w^{<\w}=\bigcup_{n\in\w}\w^n$ consisting of finite sequences $s=(s_0,\dots,s_{n-1})\in\w^n$ of finite ordinals. 
For a sequence $s=(s_0,\dots,s_{n-1})\in \w^n$ and a number $m\in\w$ by $s\hat{\;}m=(s_0,\dots,s_{n-1},m)\in \w^{n+1}$ we denote the {\em concatenation} of $s$ and $m$.

For an infinite sequence $s=(s_n)_{n\in\w}\in\IZ^\w$ and a natural number $l\in\w$ let $s|l=(s_0,\dots,s_{l-1})$ be the restriction of the function $s:\w\to\IZ$ to the subset $l=\{0,\dots,l-1\}$.
Observe that the topology of the Polish group $\IZ^\w$ is generated by the ultrametric $$d(x,y)=\inf\{2^{-n}:n\in\w,\;x|n=y|n\},\;\;x,y\in\IZ^\w.$$Observe also that for every $z\in\IZ^\w$ and $n\in\w$ the set $U(z|n)=\{x\in\IZ^\w:x|n=z|n\}$ coincides with the closed ball $\bar B(z;2^{-n})=\{x\in\IZ^\w:d(x,z)\le 2^{-n})$ centered at $z$.

Using Lemma~\ref{l1}, choose a number sequence $(\Xi_m)_{m\in\w}$ and a sequence $(T_s)_{s\in\w^{<\w}}$ of thick sets in the discrete group $\IZ$ such that for every positive integer numbers $n\le m$, finite set $F\subset \w^{<\w}$ of cardinality $|F|\le n$, function $\lambda:F\to [-m,m]\setminus\{0\}$, and numbers $z_s\in T_s$, $s\in F$, such that $\{z_s\}_{s\in F}\not\subset[-\Xi_m,\Xi_m]$ the sum $\sum_{s\in F}\lambda(s)\cdot z_s$ is not equal to zero.

For every $s\in\w^{<\w}$ and $n\in\w$ choose any point $t_{s,n}\in T_s\setminus[-n,n]$ and observe that the set $T_{s,n}=T_s\setminus(t_{s,n}\cup[-n,n])$ remains thick in $\IZ$.

By induction on the tree $\w^{<\w}$ we shall construct a sequence $(z_s)_{s\in\w^{<\w}}$ of points of $\IZ^\w$ and a sequence $(l_s)_{s\in\w^{<\w}}$ of finite ordinals satisfying the following conditions for every $s\in \w^{<\w}$:
\begin{itemize}
\item[$(1_s)$] $U(z_{s\hat{\;} i}|l_{s\hat{\;} i})\cap U(z_{s\hat{\;} j}|l_{s\hat{\;} j})=\emptyset$ for any distinct numbers $i,j\in\w$;
\item[$(2_s)$] $l_{s\hat{\;} i}> l_s+i$ for every $i\in\w$;
\item[$(3_s)$] the closure of the set $\{z_{s\hat{\;}i}\}_{i\in\w}$ contains the prethick set $\mathbb T_s=\{z_s|l_s\}\times\prod_{n\ge l_s}T_{s,n}$ and is contained in the set $\mathbb T^*_s=\{z_s|l_s\}\times \prod_{n\ge l_s}(T_{s,n}\cup\{t_{s,n}\})\subset U(z_s|l_s)$.
\end{itemize}
We start the inductive construction letting $z_0=0$ and $l_0=0$. Assume that for some $s\in\w^{<\w}$ a point $z_s\in\IZ^\w$ and a number $l_s\in\w$ have been constructed. Consider the prethick sets $\mathbb T_s$ and $\IT_s^*$ defined in the conditions $(2_s)$ and $(3_s)$. Since $\mathbb T_s$ is nowhere dense in $\mathbb T^*_s$, we can find a sequence  $(z_{s\hat{\;}i})_{i\in\w}$ of pairwise distinct points of $\mathbb T^*_s$ such that the space $D_s=\{z_{s\hat{\;}i}\}_{i\in\w}$ is discrete and contains $\mathbb T_s$ in its closure. Since $D_s$ is discrete, for every $i\in\w$ we can choose a number $l_{s\hat{\;}i}>l_s+i$ such that the open sets $U(z_{s\hat{\;}i}|l_{s\hat{\;}i})$, $i\in\w$, are pairwise disjoint. Observing that the sequences $(z_{s\hat{\;}i})_{i\in\w}$ and $(l_{s\hat{\;}i})_{i\in\w}$ satisfy the conditions $(1_s)$--$(3_s)$, we complete the inductive step.

We claim that the $G_\delta$-subset $P=\bigcap_{n\in\w}\bigcup_{s\in\w^s}U(z_s|l_s)$ of $\IZ^\w$ has the required properties. First observe that the map $h:\w^\w\to P$ assigning to each infinite sequence $s\in\w^\w$ the unique point $z_s$ of the intersection $\bigcap_{n\in\w}U(z_{s|n}|l_{s|n})$ is a homeomorphism of $\w^\w$ onto $P$. Then the inverse map $h^{-1}:P\to\w^\w$ is a homeomorphism too.

\begin{claim} For every non-empty open set $U\subset P$ its closure $\bar U$ in $\IZ^\w$ is prethick in $\IZ^\w$.
\end{claim}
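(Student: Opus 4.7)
The first move is to reduce to a basic open set. Since the map $h\colon\w^\w\to P$, $\sigma\mapsto z_\sigma$, is a homeomorphism and the family $\{P\cap U(z_s|l_s):s\in\w^{<\w}\}$ is a base of the topology of $P$, any non-empty open $U\subset P$ contains some $P\cap U(z_s|l_s)$. Since prethickness is inherited by supersets, it suffices to prove that the closure of $P\cap U(z_s|l_s)$ in $\IZ^\w$ contains the set $\mathbb T_s=\{z_s|l_s\}\times\prod_{n\ge l_s}T_{s,n}$, and that $\mathbb T_s$ itself is prethick in $\IZ^\w$.

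To establish the inclusion, fix $p\in\mathbb T_s$ and $N>l_s$, and look for a point $q\in P\cap U(z_s|l_s)$ with $q|N=p|N$. The key intermediate step is that $D_s\cap U(p|N)$ is infinite, where $D_s=\{z_{s\hat{\;}i}\}_{i\in\w}$. This rests on two properties coming from the inductive construction: $\mathbb T_s\subset\overline{D_s}$ by condition $(3_s)$, and $\mathbb T_s$ is a perfect uncountable set, being homeomorphic to a countable product of the infinite thick sets $T_{s,n}$. If only finitely many $z_{s\hat{\;}i}$ were in $U(p|N)$, then picking $p'\in\mathbb T_s\cap U(p|N)\setminus D_s$ (which is possible since $\mathbb T_s\cap U(p|N)$ is uncountable while $D_s$ is countable) and shrinking $U(p|N)$ to a neighborhood $V$ of $p'$ disjoint from those finitely many $z_{s\hat{\;}i}$ would give $V\cap D_s=\emptyset$, contradicting $p'\in\overline{D_s}$. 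Condition $(2_s)$ forces $l_{s\hat{\;}i}>l_s+i$, so only finitely many $i$ satisfy $l_{s\hat{\;}i}<N$, whence one can pick $i\in\w$ with both $z_{s\hat{\;}i}\in U(p|N)$ and $l_{s\hat{\;}i}\ge N$. For any infinite extension $\sigma\in\w^\w$ of $s\hat{\;}i$, the point $q:=h(\sigma)=z_\sigma$ lies in $P\cap U(z_s|l_s)$ and the nesting $z_\sigma\in U(z_{s\hat{\;}i}|l_{s\hat{\;}i})\subset U(z_{s\hat{\;}i}|N)=U(p|N)$ yields $q|N=p|N$, as required.

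For prethickness of $\mathbb T_s$ in $\IZ^\w$, take any compact $K\subset\IZ^\w$; each coordinate projection $F_n:=\pi_n(K)\subset\IZ$ is finite. Each $T_{s,n}$ is thick in $\IZ$ (it differs from the thick set $T_s$ by finitely many points), so for every $n\ge l_s$ there exists $f_n\in\IZ$ with $F_n-f_n\subset T_{s,n}$. The finite set
\[
F:=\Big(\prod_{n<l_s}\bigl(F_n-z_s(n)\bigr)\Big)\times\{(f_n)_{n\ge l_s}\}\subset\IZ^\w
\]
satisfies $K\subset\mathbb T_s+F$: for each $k\in K$, the element $f\in F$ with $f(n)=k(n)-z_s(n)$ for $n<l_s$ and $f(n)=f_n$ for $n\ge l_s$ yields $k-f\in\mathbb T_s$. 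Therefore $\mathbb T_s$, and consequently $\bar U$, is prethick in $\IZ^\w$. The main technical subtlety is the possibility that $p$ coincides with some $z_{s\hat{\;}i_0}\in D_s$, which is neutralised by perturbing $p$ to $p'\in\mathbb T_s\setminus D_s$ using the perfectness of $\mathbb T_s$.
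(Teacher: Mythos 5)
Your proof is correct and follows the same overall route as the paper's: reduce to a basic piece $P\cap U(z_s|l_s)$ contained in $U$, show that the prethick set $\mathbb T_s$ lies in its closure, and conclude. The genuine difference is in how the middle step is executed. The paper picks, for each $i$, a single point $y_{s\hat{\;}i}\in P\cap U(z_{s\hat{\;}i}|l_{s\hat{\;}i})$ and notes that $d(z_{s\hat{\;}i},y_{s\hat{\;}i})\le 2^{-l_{s\hat{\;}i}}\le 2^{-i}\to 0$, so the closure of $\{y_{s\hat{\;}i}\}_{i\in\w}$ automatically absorbs every limit point of $\{z_{s\hat{\;}i}\}_{i\in\w}$ and in particular all of $\mathbb T_s$; this uniform-approximation trick replaces your pointwise argument with a fixed $p\in\mathbb T_s$, a fixed precision $N$, and the perturbation to $p'\in\mathbb T_s\setminus D_s$. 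Your version is longer but makes explicit the subtlety (a point of $\mathbb T_s$ that happened to lie in the discrete set $D_s$ would not be approachable along $D_s$) that the paper's argument quietly relies on, and your perfectness argument in effect shows this case cannot occur. You also supply an explicit verification that $\mathbb T_s$ is prethick, via coordinatewise translations of the finite projections of a compact set into the thick sets $T_{s,n}$; the paper simply asserts prethickness of $\mathbb T_s$ as part of condition $(3_s)$, so including this check makes the proof more self-contained. Both arguments are sound.
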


\begin{proof} Given any non-empty open set $U\subset P$, pick any point $p\in U$ and find a unique infinite sequence $t\in\w^\w$ such that $\{p\}=\bigcap_{m\in\w}U(z_{t|m}|l_{t|m})$. Since the family $\{U(z_{t|m}|l_{t|m})\}_{m\in\w}$ is a neighborhood base at $p$, there is $m\in\w$ such that $U(z_{t|m},l_{t|m})\subset U$. Consider the finite sequence $s=t|m$. The Baire Theorem guarantees that for every $i\in\w$ the intersection $P\cap U(z_{s\hat{\;}i}|l_{s\hat{\;}i})$ contains some point $y_{s\hat{\;}i}$. Taking into account that $$d(z_{s\hat{\;}i},y_{s\hat{\;}i})\le\diam\, U(z_{s\hat{\;}i}|l_{s\hat{\;}i})\le 2^{-l_{s\hat{\;}i}}\le 2^{-i}$$ and $\mathbb T_s$ is contained in the closure of the set $\{z_{s\hat{\;}i}\}_{i\in\w}$, we conclude that the prethick set $\IT_s$ is contained in the closure of the set $\{y_{s\hat{\;}i}\}_{i\in\w}\subset P\cap U(z_s|l_s)\subset U$, which implies that $\bar U$ is prethick.
\end{proof}

\begin{claim} The subgroup $H\subset\IZ^\w$ generated by $P$
cannot be covered by countably many closed Haar-meager sets in $G$.
\end{claim}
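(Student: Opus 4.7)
My plan is to argue by contradiction using the previous claim together with the characterization of closed Haar-meager sets given in Proposition~\ref{prethick}. Suppose, to the contrary, that $H$ is contained in $\bigcup_{n\in\w}F_n$ for some sequence $(F_n)_{n\in\w}$ of closed Haar-meager subsets of $G=\IZ^\w$. Since $P\subset H$, this cover also covers $P$.

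The point is that $P$ is a Polish space: by the construction, the map $h:\w^\w\to P$, $s\mapsto z_s$, has already been identified as a homeomorphism, so $P$ is a Baire space. Applying the Baire Category Theorem to the countable cover $P=\bigcup_{n\in\w}(F_n\cap P)$ by relatively closed subsets of $P$, I obtain some $n\in\w$ for which $F_n\cap P$ has non-empty interior in $P$. Hence there exists a non-empty relatively open set $U\subset P$ with $U\subset F_n$.

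By the previous claim, the closure $\bar U$ of $U$ in $\IZ^\w$ is prethick in $G$. Since $F_n$ is closed in $G$ and $U\subset F_n$, we have $\bar U\subset F_n$, so $F_n$ itself is a prethick subset of $G$. But Proposition~\ref{prethick} asserts that a closed subset of a Polish group is Haar-meager if and only if it is not prethick, contradicting the assumption that $F_n$ is closed and Haar-meager. This contradiction will complete the proof.

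The only nontrivial input beyond routine Baire category is the previous claim that $\bar U$ is prethick, which has already been established; so I expect no serious obstacle here once the book-keeping of the construction of $P$ is in place. The slight subtlety to keep an eye on is that ``interior in $P$'' and ``closure in $\IZ^\w$'' refer to different ambient spaces, but the argument above handles this correctly since $U\subset P\subset F_n$ implies $\bar U\subset \overline{F_n}=F_n$ where both closures are taken in $\IZ^\w$.
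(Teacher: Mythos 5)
Your argument is correct and is essentially identical to the paper's own proof: both apply the Baire category theorem to the Polish (hence Baire) space $P$ to find a non-empty relatively open $U\subset P$ inside some closed $F_n$, then invoke the previous claim that $\bar U$ is prethick together with Proposition~\ref{prethick} to contradict Haar-meagerness of $F_n$. No issues.
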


\begin{proof} To derive a contradiction, assume that $H\subset\bigcup_{n\in\w}H_n$ where each set $H_n$ is closed and Haar-meager in $\IZ^\w$. Since the Polish space $P=\bigcup_{n\in\w}P\cap H_n$ is Baire, for some $n\in\w$ the set $P\cap H_n$ contains a non-empty open subset $U$ of $P$. Taking into account that closure $\bar U_n\subset H_n$ is prethick in $G$, we conclude that the set $H_n$ is prethick and not Haar-meager (according to Proposition~\ref{prethick}).
\end{proof}

It remains to prove that the subgroup $H\subset\IZ^\w$ generated by the $G_\delta$-set $P$ is $\sigma$-Polish and meager in $\IZ^\w$. This is the most difficult (in technical respect) part of the proof.

We recall that by $h:\w^\w\to P$ we denote the homeomorphism assigning to each infinite sequence $s\in\w^\w$ the unique point $z_s$ of the intersection $\bigcap_{m\in\w}U(z_{s|m}|l_{s|m})$.

For every $n\in\w$ denote by $R_n:\w^\w\to \w^n$, $R_n:s\mapsto s|n$, the restriction operator.
For any finite subset $F\subset \w^\w$ denote by $\delta_F\in\w$ the smallest number such that the restriction $R_n|F$ is injective. Let also $\Lambda_F=\max\{l_{s|\delta_F}:s\in F\}$. Taking into account that $l_{s|m}\ge m$ for every $s\in\w^\w$ and $m\in\w$, we conclude that $\Lambda_F\ge\delta_F$ for every finite subset $F$ of $P$.

\begin{claim}\label{cl2} For every $m\in\IN$, non-empty finite set $F\subset \w^\w$ of cardinality $|F|\le m$ and function $\lambda:F\to[-m,m]\setminus\{0\}$ we get $$\sum_{s\in F}\lambda(s)\cdot  z_s\in\big\{y\in\IZ^\w:\forall k> \max\{\Lambda_F,\Xi_m\}\;\;y(k)\ne 0\big\}.$$
\end{claim}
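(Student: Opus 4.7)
The plan is to fix an arbitrary coordinate $k>\max\{\Lambda_F,\Xi_m\}$ and show that the $k$-th coordinate of $\sigma:=\sum_{s\in F}\lambda(s)\cdot z_s$ is nonzero; since $k$ is arbitrary above the threshold, this is exactly the claim. The whole argument boils down to identifying, for each $s\in F$, which thick set $T_{p(s)}\in\{T_r\}_{r\in\w^{<\w}}$ contains the integer $z_s(k)$, and then invoking Lemma~\ref{l1}.

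First I would unpack $z_s(k)$ from the inductive construction. Since $\{z_s\}=\bigcap_{n\in\w}U(z_{s|n}|l_{s|n})$ and the ball $U(z_{s|n}|l_{s|n})$ is the set of sequences agreeing with $z_{s|n}$ on the first $l_{s|n}$ coordinates, one has $z_s(k)=z_{s|n}(k)$ whenever $l_{s|n}>k$. Let $n(s)\ge 1$ be the smallest such $n$ (well-defined, because $l_\emptyset=0\le k$ and the ordinals $l_{s|n}$ strictly increase in $n$ by $(2_s)$). Condition $(3_{s|(n(s)-1)})$ places $z_{s|n(s)}$ into $\mathbb{T}^*_{s|(n(s)-1)}$, and the minimality of $n(s)$ gives $l_{s|(n(s)-1)}\le k$, so
$$z_s(k)\;\in\;T_{s|(n(s)-1),\,k}\cup\bigl\{t_{s|(n(s)-1),\,k}\bigr\}\;\subset\;T_{s|(n(s)-1)}\setminus[-k,k].$$
In particular $|z_s(k)|>k>\Xi_m$, so $\{z_s(k):s\in F\}$ is disjoint from $[-\Xi_m,\Xi_m]$.

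The next step is to verify that the prefix map $p\colon F\to\w^{<\w}$, $p(s):=s|(n(s)-1)$, is injective, so that $\{T_{p(s)}\}_{s\in F}$ is a family of $|F|$ pairwise distinct thick sets from Lemma~\ref{l1}. Because $k>\Lambda_F\ge l_{s|\delta_F}$ for every $s\in F$, and $(l_{s|n})_n$ is strictly increasing, the minimality of $n(s)$ forces $n(s)-1\ge\delta_F$; hence $p(s)$ extends $s|\delta_F$, and by the very definition of $\delta_F$ the map $s\mapsto s|\delta_F$ is injective on $F$, so $p$ is injective.

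Plugging this into Lemma~\ref{l1} with $n:=|F|\le m$, coefficients $\lambda(s)\in[-m,m]\setminus\{0\}$, the pairwise distinct thick sets $T_{p(s)}$ and the points $z_s(k)\in T_{p(s)}$ (for which $\{z_s(k)\}_{s\in F}\not\subset[-\Xi_m,\Xi_m]$) gives $\sum_{s\in F}\lambda(s)\cdot z_s(k)\ne 0$, which is precisely the $k$-th coordinate of $\sigma$. The step I expect to need the most care is the injectivity of $p$: the threshold $\Lambda_F$ in the statement is tailored exactly so that $k>\Lambda_F$ pushes each $n(s)-1$ past the separating level $\delta_F$; once this is observed, the rest is a routine unpacking of the inductive conditions $(1_s)$--$(3_s)$ together with the defining property of the sets $T_s$ and $T_{s,n}$.
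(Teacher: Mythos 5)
Your proposal is correct and follows essentially the same route as the paper's proof: your $n(s)-1$ is the paper's $m_s$ (the level with $k\in[l_{s|m_s},l_{s|(m_s+1)})$), both arguments use $k>\Lambda_F$ to push that level past $\delta_F$ so the prefixes $s|m_s$ are pairwise distinct, and both then conclude via condition $(3_s)$ and Lemma~\ref{l1}. No gaps.
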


\begin{proof} Given any number $k>\max\{\Lambda_F,\Xi_m\}$, consider the projection $\pr_k:\IZ^\w\to \IZ$, $\pr_k:(x_i)_{i\in\w}\mapsto x_k$, to the $k$-th coordinate. The claim will be proved as soon as we check that the element $y=\sum_{s\in F}\lambda(s)\cdot  z_s$ has non-zero projection $\pr_k(y)$. Observe that $\pr_k(y)=\sum_{s\in F}\lambda(s)\cdot\pr_k(z_s)$. For every $s\in F$ the equality  $\{z_s\}=\bigcap_{m\in\w}U(z_{ s_p|m}|l_{ s_p|m})$ implies that
$\pr_k(z_s)=\pr_k(z_{ s_p|m})$ for any $m\in\w$ such that $l_{ s_p|m}>k$.

 Taking into account that  $k>\Lambda_F=\max_{s\in F}l_{s|\delta_F}$, for every $s\in F$ we can find a number $m_s\ge\delta_F$ such that $k\in[l_{s|m_s},l_{s|(m_s+1)})$.
Then $$\pr_k(z_s)=\pr_k(z_{s|(m_s+1)}|l_{s|(m_s+1)})\in T_{s|m_s,k}\cup\{t_{s|m_s,k}\}
\subset T_{s|m_s}\setminus [-k,k]\subset T_{s|m_s}\setminus [-\Xi_m,\Xi_m]$$ by the condition $(3_{ s|m_s})$ of the inductive construction.

The definition of the number $\delta_F$ guarantees that the sequences
$s|\delta_F$, $s\in F$, are pairwise distinct and so are the sequences $ s|m_s$, $s\in F$.
Then $\pr_k(y)=\sum_{s\in F}\lambda(s)\pr_k(z_s)\ne0$ by the choice of the family $(T_s)_{s\in\w^{<\w}}$.
\end{proof}

For every $n\in\w$ and a finite sequence $s\in\w^n$ consider the basic closed-and-open subset
$V_s=\{v\in\w^\w:v|n=s\}$ in $\w^\w$ and observe that $h(V_s)=P\cap U(z_s|l_s)$.
For any non-empty finite set $F\subset \w^\w$ put $$V_F=\prod_{s\in F}V_{s|\delta_F}.$$ Given any  function $\lambda:F\to\IZ\setminus\{0\}$ we shall prove that the function
$$\Sigma_\lambda:V_F\to H,\;\;\Sigma_\lambda:v\mapsto \sum_{s\in F}\lambda(s)\cdot z_{v(s)},$$is a closed topological embedding. This will be done in three steps. Let $\|\lambda\|=\max_{s\in F}|\lambda(s)|$.

\begin{claim} The function $\Sigma_\lambda:V_F\to H$ is injective.
\end{claim}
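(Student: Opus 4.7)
The plan is to argue by contradiction using Claim~\ref{cl2}. Suppose $\Sigma_\lambda(v)=\Sigma_\lambda(w)$ for some $v,w\in V_F$ and set $F'=\{s\in F:v(s)\ne w(s)\}$; I shall show $F'=\emptyset$. After cancellation of the terms indexed by $F\setminus F'$, the assumed equality rewrites as
\[
\sum_{s\in F'}\lambda(s)\,z_{v(s)}\;-\;\sum_{s\in F'}\lambda(s)\,z_{w(s)}\;=\;0.
\]

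The combinatorial core of the argument is to check that the $2|F'|$ points appearing in the two sums above are pairwise distinct, so that the relation is of exactly the type that Claim~\ref{cl2} can refute. Since $v(s)\in V_{s|\delta_F}$ and $w(s)\in V_{s|\delta_F}$, we have $v(s)|\delta_F=s|\delta_F=w(s)|\delta_F$, and by the very definition of $\delta_F$ the restrictions $s|\delta_F$ for $s\in F$ are pairwise distinct. Consequently the sequences $v(s)$ are pairwise distinct as $s$ ranges over $F$; likewise the $w(s)$; and an equality $v(s)=w(s')$ would force first $s|\delta_F=s'|\delta_F$ and hence $s=s'$, and then $s\notin F'$. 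Therefore, putting $G=\{v(s):s\in F'\}\cup\{w(s):s\in F'\}$ one has $|G|=2|F'|$, and the function $\mu:G\to\IZ\setminus\{0\}$ given by $\mu(v(s))=\lambda(s)$ and $\mu(w(s))=-\lambda(s)$ is well-defined and satisfies $\sum_{t\in G}\mu(t)\,z_t=0$.

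Finally I would choose $m'=\max\bigl\{2|F'|,\;\max_{s\in F'}|\lambda(s)|\bigr\}$, so that the pair $(G,\mu)$ meets the hypotheses of Claim~\ref{cl2}: $|G|\le m'$ and $\mu$ takes values in $[-m',m']\setminus\{0\}$. That claim then asserts that $\sum_{t\in G}\mu(t)\,z_t$ has a non-zero entry at every coordinate $k>\max\{\Lambda_G,\Xi_{m'}\}$, so in particular the sum is not the zero vector, contradicting the displayed identity. This forces $F'=\emptyset$, i.e.\ $v=w$, and hence injectivity. No serious obstacle is anticipated: all of the arithmetic rigidity of the construction has been packaged into Claim~\ref{cl2}, and what remains is only the bookkeeping on $\delta_F$-prefixes carried out above.
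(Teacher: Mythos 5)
Your proof is correct and follows essentially the same route as the paper: cancel the coincident terms, use the pairwise-distinct $\delta_F$-prefixes to see that the surviving indices $v(s),w(s)$ ($s\in F'$) are all distinct, and then invoke Claim~\ref{cl2} to conclude that the resulting nontrivial combination cannot be the zero vector. If anything you are a bit more careful than the paper's one-line version, since you take the parameter $m'\ge 2|F'|$ to account for the fact that the symmetric difference of the two index sets may have up to twice as many points as $F$.
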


\begin{proof} Choose any distinct sequences $u,v\in V_F$ and consider the symmetric difference $D$ of the sets $\{u(s)\}_{s\in F}$ and $\{v(s)\}_{s\in F}$. Put $m=\max\{|F|,\|\lambda\|\}$ and  $k=1+\max\{\Lambda_{h(D)},\Xi_{m}\}$. Claim~\ref{cl2} guarantees that the element
$$y=\Sigma_\lambda(u)-\Sigma_\lambda(v)=\sum_{s\in F}\lambda(s)z_{u(s)}-\sum_{s\in F}\lambda(s)z_{v(s)}$$ has $y(k)\ne 0$, which implies that $y\ne 0$ and $\Sigma_\lambda(u)\ne\Sigma_\lambda(v)$.
\end{proof}

\begin{claim} The function $\Sigma_\lambda:V_F\to H$ is a topological embedding.
\end{claim}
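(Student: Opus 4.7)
The continuity of $\Sigma_\lambda$ is immediate, since it equals the finite sum $\sum_{s\in F}\lambda(s)\cdot(h\circ\pr_s)$ of continuous maps, where $\pr_s:V_F\to\w^\w$ is the projection onto the factor indexed by $s$ and $h:\w^\w\to P$ is the homeomorphism. Since $V_F$ and $\IZ^\w$ are metrizable, to show that $\Sigma_\lambda$ is a topological embedding it suffices to verify sequential continuity of the inverse, i.e.\ that $\Sigma_\lambda(v_n)\to\Sigma_\lambda(v^*)$ in $\IZ^\w$ implies $v_n\to v^*$ in $V_F$.

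Assume the contrary; after passing to a subsequence fix $s_0\in F$ and $N_0\ge\delta_F$ with $v_n(s_0)|N_0\ne v^*(s_0)|N_0$ for all $n$. Apply Claim~\ref{cl2} to the symmetric difference $D_n=\{v_n(s)\}_{s\in F}\triangle\{v^*(s)\}_{s\in F}\subset\w^\w$: writing $\Sigma_\lambda(v_n)-\Sigma_\lambda(v^*)=\sum_{t\in D_n}\mu_n(t)\,z_t$ with $|\mu_n(t)|\le\|\lambda\|$ and $|D_n|\le 2|F|$, and setting $m_n=\max\{|D_n|,\|\lambda\|\}\le\max\{2|F|,\|\lambda\|\}$, one obtains $(\Sigma_\lambda(v_n)-\Sigma_\lambda(v^*))(k)\ne 0$ for every $k>\max\{\Lambda_{D_n},\Xi_{m_n}\}$. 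Since the topology on $\IZ^\w$ is that of coordinate-wise convergence in the discrete group $\IZ$, this forces $\max\{\Lambda_{D_n},\Xi_{m_n}\}\to\infty$; as $m_n$ is bounded, we obtain $\Lambda_{D_n}\to\infty$.

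Now split according to the behaviour of $\delta_{D_n}=\min\{N:R_N|D_n\text{ is injective}\}$. If $\delta_{D_n}\to\infty$ on a subsequence, then the first-differing position $p_n(s^*)=\min\{k:v_n(s^*)(k)\ne v^*(s^*)(k)\}$ tends to infinity for some $s^*\in F$, i.e.\ $v_n(s^*)\to v^*(s^*)$; the summand $\lambda(s^*)(z_{v_n(s^*)}-z_{v^*(s^*)})$ then tends to $0$, and we peel $s^*$ off and iterate with $F\setminus\{s^*\}$ (the base case $|F|=1$ is trivial, since $\Sigma_\lambda=\lambda(s_0)\cdot h\circ\pr_{s_0}$ is manifestly an embedding). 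If $\delta_{D_n}$ stays bounded, then $\Lambda_{D_n}\to\infty$ forces some prefix $v_n(s)|\delta_{D_n}$ to escape: on a further subsequence there exist $k_s<\delta_{D_n}$ and $\rho_s\in\w^{k_s}$ with $v_n(s)|k_s=\rho_s$ constant and $v_n(s)(k_s)\to\infty$. Condition $(2_{\rho_s})$ then guarantees $l_{\rho_s\hat{\;}v_n(s)(k_s)}\to\infty$, while $(3_{\rho_s})$ yields that the subsequence of points $z_{\rho_s\hat{\;}v_n(s)(k_s)}$ accumulates inside $\IT_{\rho_s}^*$; passing once more to a subsequence we obtain $z_{v_n(s)}\to y_s^*\in\IT_{\rho_s}^*$ in $\IZ^\w$, with $y_s^*$ outside the cylinder containing $z_{v^*(s)}$ (by $(1_{\rho_s})$). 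Taking limits in $\Sigma_\lambda(v_n)\to\Sigma_\lambda(v^*)$ now yields $\sum_{s\in F}\lambda(s)(y_s^*-z_{v^*(s)})=0$ with nonzero contributions from indices $s$ in the escape set; examining a sufficiently large coordinate and invoking the arithmetic independence of the thick sets $T_\tau$ from Lemma~\ref{l1} produces a contradiction.

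The main obstacle is the escape sub-case: because $\w^\w$ is not locally compact, no naive compactness argument extracts a convergent subsequence of $v_n(s)$, and one must exploit the full inductive construction --- the disjointness $(1_s)$, the depth growth $(2_s)$, and the accumulation structure $(3_s)$ --- together with the arithmetic independence of the $T_s$'s from Lemma~\ref{l1} to force the algebraic contradiction at a single coordinate.
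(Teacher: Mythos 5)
Your reduction to sequences, your use of Claim~\ref{cl2} on the symmetric difference $D_n$ to deduce that $\Lambda_{D_n}\to\infty$, and your ``convergent'' case (peeling off a coordinate $s^*$ with $v_n(s^*)\to v^*(s^*)$ and inducting on $|F|$) are all reasonable. The proof breaks down in the escape case. There you pass ``once more to a subsequence'' to obtain $z_{v_n(s)}\to y_s^*\in\IT^*_{\rho_s}$, but nothing licenses this extraction: the set $\IT^*_{\rho_s}=\{z_{\rho_s}|l_{\rho_s}\}\times\prod_{n\ge l_{\rho_s}}(T_{\rho_s,n}\cup\{t_{\rho_s,n}\})$ is a product of \emph{infinite} discrete subsets of $\IZ$, hence not compact, and the set $\{z_{\rho_s\hat{\;}i}\}_{i\in\w}$ is discrete; condition $(3_{\rho_s})$ only locates the closure of that set inside $\IT^*_{\rho_s}$ --- it does not give an arbitrary infinite subsequence an accumulation point (for instance the integers $z_{\rho_s\hat{\;}j_n}(l_{\rho_s})$ may run off to infinity). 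Consequently the limit identity $\sum_{s\in F}\lambda(s)(y_s^*-z_{v^*(s)})=0$ cannot be formed (it would also require convergence of $z_{v_n(s)}$ for the non-escaping, non-convergent coordinates, which you do not have), and the closing sentence ``examining a sufficiently large coordinate \dots produces a contradiction'' is a placeholder rather than an argument. You correctly name this obstacle in your final paragraph, but the body of the proof does not overcome it.

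The paper's proof avoids ever taking a limit of the divergent coordinates. After passing to a subsequence so that each $s\in F$ either satisfies $v_n(s)\to v_\infty(s)$ or stays outside a fixed basic neighborhood $V_{v_\infty(s)|m_s}$ of $v_\infty(s)$ for all $n$, one forms the hybrid sequence $v_n'$ equal to $v_\infty$ on the convergent coordinates and to $v_n$ on the rest. Then $\Sigma_\lambda(v_n')-\Sigma_\lambda(v_\infty)$ is a $\lambda$-combination over $D_n=\bigcup_{s}\{v_n(s),v_\infty(s)\}$ (union over the non-convergent $s$), whose separation level $\delta_{D_n}$ is bounded by $\max_s m_s$ uniformly in $n$; Claim~\ref{cl2} then yields one fixed coordinate $k$, independent of $n$, at which this difference is nonzero, so $d(\Sigma_\lambda(v_n'),\Sigma_\lambda(v_\infty))\ge 2^{-k}$ for all $n$, while $d(\Sigma_\lambda(v_n'),\Sigma_\lambda(v_n))\to 0$ by the convergence on the remaining coordinates; the triangle inequality gives the contradiction. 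The point you are missing is that the escaping coordinates never need to converge to anything: a single uniformly controlled coordinate where Claim~\ref{cl2} applies is enough. To repair your write-up, replace the escape case by this hybrid-sequence device (which also renders your case split and the induction on $|F|$ unnecessary).
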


\begin{proof} It suffices to show that a sequence $(v_n)_{n\in\w}\in V_F^\w$ contains a subsequence convergent to a point $v_\infty\in V_F$ if the sequence $(\Sigma_\lambda(v_n))_{n\in\w}$ converges to the point $\Sigma_\lambda(v_\infty)$ in $\IZ^\w$.
Replacing $(v_n)_{n\in\w}$ by a suitable subsequence, we can assume that for every $s\in F$ either the sequence $(v_n(s))_{n\in\w}$ converges to $v_\infty(s)$ or the point $v_\infty(s)$ has a neighborhood $W(v_\infty(s))\subset\w^\w$ containing no point of the set $\{v_n(s)\}_{n\in\w}$.
If the set $F_c=\{s\in F:\lim_{n\to\infty}v_n(s)=v_\infty(s)\}$ coincides with $F$, then the sequence $(v_n)_{n\in\w}$ converges to $v_\infty$ and we are done.

So, we assume that $F_c\ne F$ and hence the set $F_c'=F\setminus F_c$ is not empty. For every $s\in F'_c$ choose a number $m_s\ge\delta_F$ such that the basic neighborhood $V_{s|m_s}$ of $s$ is disjoint with the set $\{v_n(s)\}_{n\in\w}$. Then for every $n\in\w$ the set $D_n=\bigcup_{s\in F'_c}\{v_n(s),v_\infty(s)\}$ has $\delta_{D_n}\le\max_{s\in F}m_s$. Put $m=\max\{|F|,\|\lambda\|\}$ and  $k=1+\max\{\Xi_m,\max_{s\in F'_c}m_s\}$.

Consider the sequence $(v_n')_{n\in\w}\in\Pi_F^\w$ defined by
$$v_n(s)'=\begin{cases}
v_\infty(s)&\mbox{if $s\in F_c$}\\
v_n(s)&\mbox{if $s\in F'_c$}
\end{cases}
$$for every $n\in\w$. Claim~\ref{cl2} guarantees that for every $n\in\w$ the element
$y_n=\Sigma_\lambda(v'_n)-\Sigma_\lambda(v_\infty)$ has $y_n(k)\ne 0$
(here we also use the inequality $k\ge\delta_{D_n}$), which implies that $d\big(\Sigma_\lambda(v'_n),\Sigma_\lambda(v_n)\big)\ge 2^{-k}$.
On the other hand, the convergence of the sequence $(v_n(s))_{n\in\w}$ to $v_\infty(s)$ for $s\in F_c$ implies that $\lim_{n\to\infty}d(\Sigma_\lambda(v_n'),\Sigma(v_n))=0$. Taking into account the triangle inequality we conclude that the sequence $\big(d(\Sigma_\lambda(v_n),\Sigma_\lambda(v_\infty))\big)_{n\in\w}$ does not converge to zero, which is a desired contradiction completing the proof of the claim.
\end{proof}

\begin{claim}\label{cl:sp} The function $\Sigma_\lambda:V_F\to H$ is a closed topological embedding.
\end{claim}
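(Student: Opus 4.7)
The plan is to prove closedness of $\Sigma_\lambda:V_F\to H$ by contradiction via diagonal extraction, following the pattern of the preceding claim but using the hypothesis $y\in H$ and the uniqueness of representation in $H$ supplied by Claim~\ref{cl2}. Suppose $(v_n)_{n\in\w}\subset V_F$ and $y_n:=\Sigma_\lambda(v_n)\to y\in H$ in $\IZ^\w$; I shall produce $v_\infty\in V_F$ with $\Sigma_\lambda(v_\infty)=y$.

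First I would pass to a subsequence by diagonal extraction in each factor $V_{s|\delta_F}\subset\w^\w$ so that, for every $s\in F$ and every $j\in\w$, the sequence $(v_n(s)(j))_{n\in\w}$ in $\w$ is either eventually constant or strictly increases to $\infty$. Let $k_s\in\w\cup\{\infty\}$ be the smallest $j$ of the second type. If every $k_s=\infty$, then $v_n$ converges coordinatewise to some $v_\infty\in V_F$, and continuity of $\Sigma_\lambda$ yields $y=\Sigma_\lambda(v_\infty)$. Otherwise set $F_e:=\{s\in F:k_s<\infty\}\neq\emptyset$; for $s\in F_e$ write $\tilde s_s:=v_n(s)|k_s$ (eventually constant) and $i_n^s:=v_n(s)(k_s)\to\infty$, and for $s\in F_c:=F\setminus F_e$ let $v_\infty(s):=\lim_n v_n(s)\in V_{s|\delta_F}$.

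For each $s\in F_e$ and $n$ large enough that $k<l_{\tilde s_s\hat{\;}i_n^s}$ one has $z_{v_n(s)}(k)=z_{\tilde s_s\hat{\;}i_n^s}(k)\in T_{\tilde s_s,k}\cup\{t_{\tilde s_s,k}\}$; since $\IT_{\tilde s_s}\subset\overline{\{z_{\tilde s_s\hat{\;}i}\}_{i\in\w}}$, a further diagonal extraction yields a subsequence along which $z_{v_n(s)}\to p_s$ coordinatewise in $\IZ^\w$ for every $s\in F_e$, with $p_s\in\IT_{\tilde s_s}^*$. The crucial observation is that $p_s\notin P$: were $p_s=z_{v^*}$ for some $v^*\in\w^\w$, matching initial segments would force $v^*|k_s=\tilde s_s$, so $p_s$ would lie in the basic clopen set $U(z_{\tilde s_s\hat{\;}v^*(k_s)}|l_{\tilde s_s\hat{\;}v^*(k_s)})$, which by condition $(1_{\tilde s_s})$ is disjoint from every $U(z_{\tilde s_s\hat{\;}i_n^s}|l_{\tilde s_s\hat{\;}i_n^s})$ containing $z_{v_n(s)}$ for $n$ large with $i_n^s\ne v^*(k_s)$; this contradicts $z_{v_n(s)}\to p_s$ and the clopenness of these sets.

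Passing to the limit gives $y=\sum_{s\in F_c}\lambda(s)z_{v_\infty(s)}+\sum_{s\in F_e}\lambda(s)p_s$. Setting $w:=y-\sum_{s\in F_c}\lambda(s)z_{v_\infty(s)}=\sum_{s\in F_e}\lambda(s)p_s$, the hypothesis $y\in H$ forces $w\in H$, and so $w$ admits a unique representation $w=\sum_{r\in R}\mu(r)r$ with $R\subset P$ finite and $\mu$ nowhere zero. The contradiction will come from the resulting equality $\sum_{s\in F_e}\lambda(s)p_s(k)=\sum_{r\in R}\mu(r)r(k)$: I would evaluate at a coordinate $k$ chosen (i) larger than $l_{\tilde s_s}$ for every $s\in F_e$, (ii) larger than the finitely many "transition heights" at which the branches $u_r:=h^{-1}(r)$ of $r\in R$ move to a deeper level of the tree, and (iii) larger than $\Xi_M$ for $M:=|F_e|+|R|+\|\lambda\|+\|\mu\|$. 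At such $k$ each $p_s(k)$ lies in the thick set $T_{\tilde s_s}$ and each $r(k)$ lies in $T_{u_r|(m(r,k)-1)}$, where $m(r,k)$ is the least $m$ with $l_{u_r|m}>k$. By arranging the finite sequences $\tilde s_s$ ($s\in F_e$) and $u_r|(m(r,k)-1)$ ($r\in R$) to be pairwise distinct — possible for infinitely many $k$, since $R$ and $F_e$ are finite while the $\tilde s_s$ strictly extend $s|\delta_F$ — Lemma~\ref{l1} forces the nontrivial $\IZ$-combination $\sum_{s\in F_e}\lambda(s)p_s(k)-\sum_{r\in R}\mu(r)r(k)$ to be non-zero, contradicting $w(k)=w(k)$. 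The main obstacle is the bookkeeping in (ii): because $R$ is finite and the branches $u_r$ are fixed, only finitely many labels $u_r|(m(r,k)-1)$ arise between any two consecutive transition levels of $R$, so for sufficiently large $k$ avoiding those transitions these labels are distinct from each of the finitely many $\tilde s_s$; everything else is routine diagonalisation.
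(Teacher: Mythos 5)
Your overall strategy is the right one and runs parallel to the paper's: reduce closedness to ``if $\Sigma_\lambda(v_n)$ converges in $H$ then $(v_n)$ has a convergent subsequence'', split $F$ into coordinates that converge and coordinates that escape, use the representation of the limit as a finite integer combination of points of $P$, and kill the escaping part by evaluating at a large coordinate via Lemma~\ref{l1}. But there is a genuine gap at the pivotal step: you assert that ``a further diagonal extraction yields a subsequence along which $z_{v_n(s)}\to p_s$'' for each escaping $s\in F_e$. Nothing guarantees such a subsequence exists. The points $z_{v_n(s)}$ lie in $\IT^*_{\tilde s_s}=\{z_{\tilde s_s}|l_{\tilde s_s}\}\times\prod_{k\ge l_{\tilde s_s}}(T_{\tilde s_s,k}\cup\{t_{\tilde s_s,k}\})$, and the factors $T_{\tilde s_s,k}$ are thick, hence infinite, subsets of $\IZ$; so $\IT^*_{\tilde s_s}$ is not compact and a coordinate sequence $\bigl(z_{v_n(s)}(k)\bigr)_{n\in\w}$ may simply tend to infinity, leaving no convergent subsequence. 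Convergence of the \emph{sum} $\sum_{s\in F_e}\lambda(s)z_{v_n(s)}$ does not yield convergence of the individual summands once $|F_e|\ge 2$: to exclude divergence-with-cancellation you would need a quantitative lower bound of the form ``the sum is comparable to its largest term'', which is the inequality hidden \emph{inside the proof} of Lemma~\ref{l1} but is not part of its statement (the statement only gives $\ne 0$, and applying it to the difference of two instances fails because each set $T_{\tilde s_s}$ then occurs twice, violating the pairwise-distinctness hypothesis). Everything after this point — $p_s\notin P$, the unique representation of $w$ over $P$, and the label bookkeeping for Lemma~\ref{l1} — is fine modulo the existence of the $p_s$, but the proof does not stand without it.

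This is exactly the difficulty the paper's argument is designed to avoid: it never takes limits of the escaping terms. Instead it writes $g=\sum_{u\in E}\mu(u)z_u$, freezes the convergent coordinates to obtain $v'_n$, and then applies Claim~\ref{cl2} to the combination $\Sigma_\lambda(v'_n)-\Sigma_\mu(u)$ for each $n$ separately, at one coordinate $k$ chosen \emph{uniformly in $n$} (this uniformity is secured by the separating neighborhoods $V_{u|m_u}$, which bound $\delta$ of the relevant finite sets independently of $n$). That yields $d(\Sigma_\lambda(v'_n),g)\ge 2^{-k}$ for all $n$, and the triangle inequality finishes. If you want to keep your limit-based architecture, you must first prove the domination estimate for the sets $T_A$ (essentially re-proving the computation in Lemma~\ref{l1}) to force the escaping coordinates to be bounded, and only then extract; otherwise, switch to the fixed-coordinate comparison.
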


\begin{proof} It suffices to prove that a sequence $(v_n)_{n\in\w}\in V_F^\w$ contains a convergent subsequence if the sequence $(\Sigma_\lambda(v_n))_{n\in\w}$ converges to some point $g\in H$.
Write $g$ as $g=\sum_{u\in E}\mu(u)z_u$ for some finite set $E\subset \w^\w$ and some function $\gamma:E\to\IZ\setminus\{0\}$.

Replacing $(v_n)_{n\in\w}$ by a suitable subsequence, we can assume that for every $u\in E$ either for some $s\in F$ the sequence $(v_n(s))_{n\in\w}$ converges to $u$ or the point $u$ has a neighborhood $W_u\subset\w^\w$ containing no point of the set $\{v_n(s):n\in\w,\;s\in F\}$. Consider the sets $E_c=\{u\in E:\exists s\in F\;\lim_{n\to\infty}v_n(s)=u\}$, $E_c'=E\setminus E_c$ and for every $u\in E_c$ choose a point $s_u\in F$ such that $u=\lim_{n\to\infty}v_n(s_u)$. Since the family $(V_{s|\delta_F})_{s\in F}$ is disjoint, the point $s_u$ is unique.
Let $F_c=\{s\in F:\exists u\in E\;\;s=s_u\}=\{s\in F:\exists u\in E$ with $u=\lim_{n\to\infty}v_n(s)\}$ and for every $s\in F_c$ put $u_s=\lim_{n\to\infty}v_n(s)\in V_{s|\delta_F}$.

If $F_c=F$, then the sequence $(v_n)_{n\in\w}$ is convergent in $V_F$ and we are done. So, we assume that $F_c\ne F$ and hence the set $F'_c=F\setminus F_c$ is not empty.
For every $u\in E'_c$ choose a number $m_u\ge\delta_{F\cup E}$ such that the neighborhood $V_{u|m_u}$ is disjoint with the set $\{v_n(s):s\in F,\;n\in\w\}$.

Consider the sequence $(v_n')_{n\in\w}\in\Pi_F^\w$ defined by
$$v_n'(s)=\begin{cases}
u_s&\mbox{if $s\in F_c$},\\
v_n(s)&\mbox{if $s\in F\setminus F_c$},
\end{cases}
$$for every $n\in\w$.

Let $m=\max\{|F|+|E|,\|\lambda\|+\|\mu\|\}$ and $k=1+\max\{\Xi_m,\max_{q\in E_c'}m_q\}$. Claim~\ref{cl2} ensures that the point $y=\Sigma_\lambda(v'_n)-\Sigma_\mu(u)$ has $y(k)\ne0$, which implies $d(\Sigma_\lambda(v'_n),g)=d(\Sigma_\lambda(v'_n),\Sigma_\mu(u))\ge 2^{-k}$. On the other hand, the convergence of the sequences $(v_n(s))_{n\in\w}$, $s\in F_c$, implies that $d(\Sigma_\lambda(v_n),\Sigma_\lambda(v'_n))\to 0$. Then the triangle inequality ensures that the sequence $\big(\Sigma_\lambda(v_n),g)_{n\in\w}$ does not converge to zero, contradicting with the choice of $g$.
\end{proof}

Claim~\ref{cl2} implies

\begin{claim}\label{cl:meg} The set $\Sigma_\lambda(V_F)\subset\{y\in\IZ^\w:\exists m\in\w\;\forall k\ge m\;\;y(k)\ne0\}$ is meager.
\end{claim}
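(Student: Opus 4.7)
The plan is to split the claim into the asserted set-theoretic containment and the meagerness statement, deducing the first from Claim~\ref{cl2} and the second by a direct Baire category argument inside $\IZ^\w$.

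To establish the containment, I would fix $v\in V_F$ and set $F'=\{v(s):s\in F\}\subset\w^\w$. Since the basic clopen sets $V_{s|\delta_F}$, $s\in F$, are pairwise disjoint by the very definition of $\delta_F$, the points $v(s)$ are pairwise distinct, so $s\mapsto v(s)$ is a bijection $F\to F'$. Accordingly I would introduce $\lambda':F'\to\IZ\setminus\{0\}$ by $\lambda'(v(s))=\lambda(s)$, whence $|F'|=|F|$ and $\|\lambda'\|=\|\lambda\|$. With $m=\max\{|F|,\|\lambda\|\}$ the hypotheses of Claim~\ref{cl2} hold for the pair $(F',\lambda')$, and the claim yields
\[
\Sigma_\lambda(v)(k)=\sum_{s\in F}\lambda(s)\,z_{v(s)}(k)=\sum_{u\in F'}\lambda'(u)\,z_u(k)\ne 0
\]
for every $k>\max\{\Lambda_{F'},\Xi_m\}$. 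Thus $\Sigma_\lambda(v)$ has only finitely many zero coordinates, which is exactly the required containment.

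For the meagerness, decompose the target set as $E=\bigcup_{m\in\w}E_m$, where $E_m=\{y\in\IZ^\w:\forall k\ge m\;\;y(k)\ne 0\}=\bigcap_{k\ge m}\{y\in\IZ^\w:y(k)\ne 0\}$. Because $\IZ$ carries the discrete topology, each coordinate slice $\{y\in\IZ^\w:y(k)\ne 0\}$ is clopen, so every $E_m$ is closed. To see that $E_m$ is nowhere dense, take an arbitrary non-empty basic clopen set $U(z|n)=\{y\in\IZ^\w:y|n=z|n\}$ and any $k\ge\max(n,m)$: the non-empty clopen subset $\{y\in U(z|n):y(k)=0\}$ of $U(z|n)$ avoids $E_m$, so $E_m$ has empty interior. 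Hence $E$ is a countable union of closed nowhere dense sets, hence meager, and $\Sigma_\lambda(V_F)\subset E$ is meager as well. I do not anticipate any serious obstacle: the arithmetic content is already packaged in Claim~\ref{cl2}, and the only bookkeeping is the relabelling of $(F,\lambda)$ as $(F',\lambda')$ ensuring that the cardinality and norm bounds required by that claim are preserved.
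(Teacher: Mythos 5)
Your proposal is correct and follows exactly the route the paper intends: the paper states this claim with no proof beyond the phrase ``Claim~\ref{cl2} implies,'' and your argument simply fills in the two routine steps --- applying Claim~\ref{cl2} to the relabelled pair $(F',\lambda')$ with $m=\max\{|F|,\|\lambda\|\}$ to get the containment, and observing that each $E_m=\{y:\forall k\ge m\;y(k)\ne0\}$ is closed and nowhere dense in $\IZ^\w$. No gaps.
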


Denote by $[\w^{<\w}]^{<\w}$ the family of finite subsets of $\w^{<\w}$. For the empty set $F=\emptyset$ and the unique map $\lambda\in \IZ_{\ne0}^F$ we put $\Sigma_\lambda(V_F)=\{0\}$. Claims~\ref{cl:sp}, \ref{cl:meg} and the obvious equality
$$H=\bigcup_{F\in[\w^{<\w}]^{<\w}}\bigcup_{\lambda\in \IZ_{\ne0}^F}\Sigma_\lambda(V_F)$$imply that the subgroup $H$ is $\sigma$-Polish and is contained in the meager subset $\{0\}\cup\bigcup_{m\in\w}\{z\in \IZ^\w:\allowbreak\forall k\ge m\;\;z(k)=0\}$ of $\IZ^\w$.
\end{proof}

\section{Set-Theoretic constructions of subgroups and the cardinal characteristic $\voc^*$}\label{s7}

In Example~\ref{e:counter} we constructed a subgroup $H\subset \IZ^\w$ which is meager and Haar null but does not belong to the $\sigma$-ideal $\E$ generated by closed Haar null subsets of $\IZ^\w$.
It is natural to ask if a subgroup with these properties exists in any Polish group, in particular in the Cantor cube $2^\w$, considered as the countable power of the two-element group $\{0,1\}$ endowed with the operation of addition modulo 2.

\begin{problem}\label{prob:ZFC} Is it true that the Cantor cube $2^\w$ contain a meager Haar null subgroup $G$ which cannot be covered by countably many closed Haar null sets? {\rm (By  Theorem~\ref{Lacz} such subgroup $G$ cannot be quasi-analytic).}
\end{problem}

In this section we shall prove that under Martin's Axiom the answer to this problem is affirmative.
More precisely, it is affirmative under the assumption $\cov(\N)=\cof(\M)=\cof(\M)$.

Problem~\ref{prob:ZFC} is a partial case of the following more general problem.

\begin{problem}\label{probmain2} Find conditions on two $\sigma$-ideals $\I,\J$ on a group $G$ under which the family $\I\setminus\J$ contains an subgroup of $G$.
\end{problem}

It turns out that even for the standard $\sigma$-ideals $\M$ and $\mathcal N$ on the Cantor cube $2^\w$ the answer to Problem~\ref{probmain2} is not trivial (and not symmetric). On one hand, for the family $\N\setminus\M$ we have the following ZFC-result due to Talagrand \cite{Tal}.

\begin{theorem}[Talagrand]\label{Tal} The compact topological group $2^\w$ contains a subgroup $H\subset 2^\w$ (which is an ideal on $\w$) that belongs to the family $\mathcal N\setminus \mathcal M$.
\end{theorem}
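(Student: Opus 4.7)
The plan is to build the subgroup $H$ as an ideal on $\w$ by a transfinite recursion of length $\mathfrak c$, using the axiom of choice. Since $H$ is automatically a subgroup of $(2^\w,\triangle)$, Banach's theorem implies that any such $H$ with the Baire property would be open or meager, and a proper open subgroup of $2^\w$ has finite index (hence positive Haar measure). So the target $H\in\N\setminus\M$ must fail to be Borel, ruling out any constructive approach and forcing us into a transfinite construction.

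The engine for non-meagerness is Talagrand's combinatorial characterization: an ideal $\I\supseteq\mathrm{Fin}$ on $\w$ is meager in $2^\w$ iff there is a partition of $\w$ into finite intervals $(I_n)_{n\in\w}$ such that every $A\in\I$ is disjoint from infinitely many $I_n$. Accordingly, I would enumerate all such interval partitions as $\{\pi_\alpha\}_{\alpha<\mathfrak c}$ and at each stage $\alpha$ add to the ideal a generator $A_\alpha$ which is a selector of $\pi_\alpha$, i.e. meets each interval of $\pi_\alpha$ in exactly one point. The final ideal then contains a Talagrand-witness against every partition and so cannot be meager.

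For the measure-zero condition, the naive approach of placing $H$ inside some fixed null Borel ideal fails, because every null Borel ideal is itself a proper subgroup with the Baire property and hence meager by Banach's theorem, so $H$ would inherit meagerness. Instead, the plan is to diagonalize against positive-measure compact sets: enumerate a cofinal family $\{C_\alpha\}_{\alpha<\mathfrak c}$ of compact positive-measure subsets of $2^\w$, and at stage $\alpha$ pick the generator $A_\alpha$ so that no finite union of previously chosen generators together with $A_\alpha$ lies inside $C_\alpha$. Iterating through $\mathfrak c$ stages arranges that for every Borel set $B$ of positive measure the intersection $H\cap B$ is kept thin, so that in the end $\mu^*(H)=0$.

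The main obstacle lies in the combinatorial lemma needed at each recursion step: given an interval partition $\pi$, a finite family of already-built generators, and a forbidden compact positive-measure set $C\subseteq 2^\w$, one must find a selector of $\pi$ whose addition to the ideal keeps every finite union involving it outside $C$. This is where Talagrand's ingenuity enters --- the set of selectors of any partition $\pi$ is rich enough, in a suitable measure-theoretic sense, to avoid any prescribed exclusion, and carrying out the diagonalization in ZFC (without invoking CH, MA, or additional cardinal characteristic assumptions such as $\cof(\M)=\cov(\M)=\cov(\N)$) is the technical heart of the argument. Once the lemma is established, the transfinite construction terminates with the desired ideal $H\in\N\setminus\M$.
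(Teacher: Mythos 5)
The paper does not actually prove this statement---it is quoted as a known theorem with the citation \cite{Tal}---so there is no internal argument to compare yours with; judged on its own terms, your proposal has two genuine gaps. The first is that the combinatorial characterization you rely on is misstated, and the error propagates into your choice of generators. The Jalali--Naini--Talagrand theorem for an ideal $\I\supseteq\mathrm{Fin}$ says that $\I$ is meager if and only if there is a partition of $\w$ into finite intervals $(I_n)_{n\in\w}$ such that \emph{no} $A\in\I$ contains infinitely many of the intervals $I_n$ entirely. Your version (``every $A\in\I$ is disjoint from infinitely many $I_n$'') characterizes nothing: for the partition of $\w$ into singletons this condition holds for \emph{every} proper ideal containing $\mathrm{Fin}$, since every element of such an ideal is co-infinite; and for any partition the set $\{A:\exists^\infty n\;A\cap I_n=\emptyset\}$ is comeager, not meager. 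Consequently a selector of $\pi_\alpha$ does not defeat the partition $\pi_\alpha$: to certify non-meagerness you must place in $H$ a set containing infinitely many intervals of $\pi_\alpha$ as subsets, which a one-point-per-interval selector never does (and for partitions with intervals of bounded size two selectors can union to $\w$, destroying properness). So the ideal you build is not shown to be non-meager, and generated as described it typically will be meager.

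The second gap is the measure half. Arranging that no finite union of generators is contained in a prescribed positive-measure compact set $C_\alpha$ places no upper bound on $\mu^*(H)$: outer measure zero means $H$ is contained in a single Borel null set, and your construction never exhibits one. (Nor can ``$H\cap B$ thin for every positive-measure $B$'' be the mechanism: applied to $B=2^\w$ it forces $|H|<\mathfrak c$, and sets of cardinality $<\mathfrak c$ need not be null.) The actual construction fixes \emph{in advance} a null set of the form $N=\{A:\exists^\infty n\;|A\cap J_n|\le f(n)\}$ for one interval partition $(J_n)$ chosen so that $\sum_n\mu(\{A:|A\cap J_n|\le f(n)\})<\infty$; this $N$ is null by Borel--Cantelli and closed under taking subsets, and the transfinite induction must keep every finite union of generators inside $N$ while simultaneously making each generator swallow infinitely many intervals of its assigned partition. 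Reconciling those two demands is precisely the combinatorial lemma you acknowledge you cannot supply, so both halves of the conclusion $H\in\N\setminus\M$ remain unproved.
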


On the other hand, for the family $\M\setminus \N$ a counterpart of Talagrand's result is independent of ZFC, see \cite{Burke} or \cite{Paw}.

\begin{theorem}[Burke]\label{BurPaw} It is consistent that each meager subgroup of $2^\w$ is Haar null. On the other hand, Martin's Axiom implies the existence of a subgroup $H\in\M\setminus\N$ in $2^\w$. \end{theorem}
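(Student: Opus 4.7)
For the Martin's Axiom half, my plan is a straightforward transfinite recursion of length $\mathfrak c$. First I would fix a meager $F_\sigma$ set $M\subset 2^\w$ of full Haar measure with $0\in M$; such an $M$ exists as the complement of any Haar null dense $G_\delta$, for instance $M=\bigcup_n(2^\w\setminus U_n)$ where $(U_n)_{n\in\w}$ is a sequence of open dense sets with $0\notin U_n$ and $\mu(U_n)<2^{-n}$. Any subset of $M$ is then meager, so it suffices to build a subgroup $H\subset M$ which is not contained in any Borel Haar null set. Enumerate all Haar null $G_\delta$ subsets of $2^\w$ as $(N_\alpha)_{\alpha<\mathfrak c}$ and construct recursively an increasing chain $H_0\subset H_1\subset\cdots$ of subgroups with $H_0=\{0\}$, $H_\lambda=\bigcup_{\alpha<\lambda}H_\alpha$ at limits, and at each successor stage $H_{\alpha+1}=H_\alpha\cup(H_\alpha+x_\alpha)$, where the new generator is chosen as
$$x_\alpha\in\Bigl(\bigcap_{h\in H_\alpha}(M+h)\Bigr)\setminus N_\alpha.$$

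The recursion goes through because each translate $M+h$ has Haar measure $1$, and $|H_\alpha|\le|\alpha|+\aleph_0<\mathfrak c$. Since Martin's Axiom yields $\add(\N)=\mathfrak c$, the intersection $\bigcap_{h\in H_\alpha}(M+h)$ has measure $1$; removing the null set $N_\alpha$ leaves a set of full measure, so $x_\alpha$ exists. Because $x_\alpha\in\bigcap_{h\in H_\alpha}(M+h)$, we have $H_\alpha+x_\alpha\subset M$, whence $H_{\alpha+1}\subset M$. The final subgroup $H=\bigcup_{\alpha<\mathfrak c}H_\alpha\subset M$ is therefore meager, and by construction $x_\alpha\in H\setminus N_\alpha$ for every $\alpha<\mathfrak c$, so $H$ is not contained in any Borel Haar null set and thus $H\in\M\setminus\N$. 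Note that $H$ is automatically non-open, since $M$ has empty interior.

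For the consistency half, my plan is to appeal to a forcing-theoretic construction in a model of ZFC in which the Cichoń-diagram inequalities block the Martin's Axiom argument just given. A natural candidate is the Cohen model $V^{\mathrm{Fn}(\w_2,2)}$ over a ground model of CH, in which $\cov(\N)=\aleph_1<\aleph_2=\mathfrak c$, so the intersection of $\aleph_1$ translates of a full-measure $F_\sigma$ set may already be empty and the construction breaks down. One then argues that every meager subgroup $H\subset 2^\w$ in the extension is in fact already covered by countably many closed Haar null sets arising from ground-model names: a non-Haar-null subgroup would yield, via its nice name, a Luzin-type set of Cohen reals incompatible with the smallness of $\cov(\N)$. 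The main obstacle here is that, unlike the Martin's Axiom side where the construction is explicit and combinatorial, this direction is genuinely set-theoretic and requires a detailed analysis of how subgroups of $2^\w$ are coded in Cohen-generic extensions; the full argument is carried out in \cite{Burke} (and independently in \cite{Paw}).
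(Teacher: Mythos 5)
This theorem is quoted by the paper from \cite{Burke} and \cite{Paw} without proof, so the real comparison is with the paper's surrounding machinery. For the Martin's Axiom half your transfinite recursion is correct and complete: since $2^\w$ is Boolean, $H_{\alpha+1}=H_\alpha\cup(H_\alpha+x_\alpha)$ is again a subgroup; the generator $x_\alpha$ exists because MA gives $\add(\N)=\mathfrak c$, so the complement of $\bigcap_{h\in H_\alpha}(M+h)$, being a union of $|H_\alpha|<\mathfrak c$ null sets, is null, and removing the further null set $N_\alpha$ leaves a co-null set; and diagonalizing against the $\mathfrak c$ many null $G_\delta$ sets suffices because every Haar null subset of the compact group $2^\w$ is contained in one. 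This is in substance the same diagonalization as the paper's Proposition~\ref{p6}, whose instance Corollary~\ref{c4}(1) gives the same conclusion under the weaker hypothesis $\cof(\N)=\cov(\N)$ (which MA implies, since it collapses all these invariants to $\mathfrak c$). The difference is one of packaging: the paper isolates the abstract hypothesis $\cof(\I,\I')\le\voc^*(\J,\I')$ and runs the recursion against a cofinal family in $\I'$, whereas you hard-wire the witnesses (a full-measure meager $F_\sigma$ set $M$ and the null $G_\delta$ sets) and replace the $\voc^*$ bookkeeping by the single equality $\add(\N)=\mathfrak c$. The paper's version buys the sharper hypothesis and the other cases of Corollary~\ref{c4}; yours buys concreteness.

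The consistency half, by contrast, is not proved in your proposal: everything of substance there rests on the citation, and the surrounding sketch should not be mistaken for the argument. Deferring to \cite{Burke} and \cite{Paw} is legitimate --- the paper does exactly that --- but you should present this half as an imported result rather than as an outline you could complete. In particular, do not commit to the Cohen extension being the model, nor to the ``nice name / Luzin-type set'' mechanism: the published proofs proceed via a rectangle-inclusion principle for Borel meager/null subsets of the plane (applied to the inclusion $H\times H\subset\{(x,y):x+y\in M\}$ for a Borel meager $M\supset H$), and the consistency of that principle is precisely what \cite{Burke} and \cite{Paw} establish. As written, your paragraph asserts a strategy whose key steps (``one then argues that \dots would yield \dots incompatible with the smallness of $\cov(\N)$'') cannot be checked and is not the route taken in the literature; either carry it out or replace it by a plain citation.
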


We shall give consistent answers to Problems~\ref{prob:ZFC} and \ref{probmain2} using some cardinal characteristics of ideals $\I,\J$ on a group $X$. In the sequel, we shall consider only ideals $\I$ on a set $X$ with the property: $\bigcup\I=X\notin\I$. In this case the following four cardinal characteristics are well-defined:
\begin{itemize}
\item $\add(\I)=\min\{|\A|: \A\subset \I \;\; \bigcup \A\not\in\I \}$;
\item $\non(\I)=\min\{|A|: A\subset X \;\; A\not\in\I \}$;
\item $\cov(\I)=\min\{|\A|: \A\subset \I \;\; \bigcup\A=X \}$;
\item $\cof(\I)=\min\{|\A|: \A\subset \I \;\; \forall I\in\I\; \exists A\in \A\; (I\subset A)\}$.
\end{itemize}
In fact, these four cardinal characteristics are partial cases of the following two cardinal characteristics defined for ideals $\I\subset\J$ on $X$:
\begin{itemize}
\item $\add(\I,\J)=\min\{|\A|: \A\subset \I \;\;\; \bigcup \A\not\in\J \}$;
\item $\cof(\I,\J)=\min\{|\A|: \A\subset \J \;\;\; \forall I\in\I\; \exists J\in \A\;\; (I\subset J)\}$;
\item $\cov(\I,\J)=\min\{|\A|:\A\subset\I \;\;\;G\setminus\bigcup\A\in\J\}$;
\end{itemize}
It is clear that $\add(\I)=\add(\I,\I)$, $\cof(\I)=\cof(\I,\I)$, $\cov(\I)=\cof(\F,\I)=\cov(\I,\I)$ and $\non(\I)=\add(\F,\I)$, where $\F$ is the ideal of finite subsets of $X$.

The following trivial proposition gives a simple answer to Problem~\ref{probmain2}.

\begin{proposition} Let $\I\subset\J$ be two ideals on a group $G$. If $\non(\I)<\non(\J)$, then there exists a subgroup $H\in\J\setminus\I$.
\end{proposition}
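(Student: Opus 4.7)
The plan is to produce the subgroup $H$ as the subgroup generated by a witness to the strict inequality $\non(\I)<\non(\J)$.

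First I would choose a set $A\subset G$ of cardinality $|A|=\non(\I)$ such that $A\notin\I$; this is possible by the very definition of $\non(\I)$. Since singletons belong to every ideal considered here (as $\bigcup\I=X$ and $\I$ is closed under taking subsets), the cardinal $\non(\I)$ is infinite, so $|A|$ is infinite as well.

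Next I would let $H=\langle A\rangle$ be the subgroup of $G$ generated by $A$. A standard counting argument gives $|H|=|A|=\non(\I)$, because the subgroup generated by an infinite set $A$ in a group has cardinality $\max(|A|,\aleph_0)=|A|$. By hypothesis, $|H|=\non(\I)<\non(\J)$, and the defining property of $\non(\J)$ then forces $H\in\J$ (any set of cardinality strictly smaller than $\non(\J)$ must lie in $\J$). On the other hand, $A\subset H$ and $A\notin\I$, so since $\I$ is closed under subsets, $H\notin\I$. Hence $H\in\J\setminus\I$, as required.

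There is no real obstacle: the argument is a direct unpacking of the definitions of $\non(\I)$ and $\non(\J)$, together with the elementary fact that the subgroup generated by an infinite set has the same cardinality as the set. This is why the proposition is labeled trivial in the paper.
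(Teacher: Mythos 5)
Your argument is correct and is essentially identical to the paper's proof: both take a witness set $A\notin\I$ of cardinality $\non(\I)$, generate the subgroup $H=\langle A\rangle$, and use $|H|\le\max\{\aleph_0,\non(\I)\}=\non(\I)<\non(\J)$ to conclude $H\in\J$, while $A\subset H$ gives $H\notin\I$. Your explicit remark that $\non(\I)$ is infinite (since $\bigcup\I=G$ forces all finite sets into $\I$) is a small justification the paper leaves implicit, but the route is the same.
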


\begin{proof} By the definition of the cardinal $\non(\I)$, there is a subset $A\subset G$ of cardinality $|A|=\non(\I)$ such that $A\notin\I$. Then the subgroup $H$ generated by the set $A$ has cardinality $|H|\le\max\{\aleph_0,\non(\I)\}=\non(\I)<\non(\J)$ and hence $H$ belongs to the ideal $\J$.
\end{proof}

A more complicated answer to Problem~\ref{probmain2} will be given with help of the cardinal characteristic $\voc^*(\I,\J)$ defined for two ideals $\I,\J$ on a group $G$ as follows:
\begin{itemize}
\item $\cov^*(\I,\J)=\min\{|X|:X\subset G\mbox{ and }\exists I\in\I\;\;G\setminus (X\cdot I)\in\J\}$;
\item $\voc^*(\I,\J)=\sup\{\kappa:\exists I\in\I\; \forall X\subset G\;(|X|<\kappa\Rightarrow \bigcap_{x\in X}(xI)\notin\J)\}$;
\item $\non^*(\I)=\min\{|X|:X\subset G$ and $\forall g\in G\;\;g\cdot X\notin\I\}$;
\item $\cov^*(\I)=\cov^*(\I,\I)$;
\item $\voc^*(\I)=\voc^*(\I,\{\emptyset\})$.
\end{itemize}
This definition implies that $\voc^*(\I,\J)=1$ for any ideals $\I\subset\J$ on $X$.

An ideal $\I$ on a group $G$ is called {\em left-invariant} if for every $A\in\I$ and $g\in G$ the set $gA=\{ga:a\in A\}$ belongs to $\I$. It is easy to see that for any left-invariant ideals $\I\subset\J$ on a group $G$ we get $\cov(\I,\J)\le\cov^*(\I,\J)$. 

Given two ideals $\I,\J$ by $\I\vee\J$ we denote the ideal generated by the union $\I\cup\J$. For an ideal $\I$ on a group $G$ we put $\I^{-1}=\{A^{-1}:A\in\I\}$.

 \begin{proposition}\label{p5} Let $\I,\I'$ and $\J$ be two ideals on a group $G$ such that $G\in (\I\vee\I')\setminus\J$. Then $$\cov^*(\I',\J)\le\voc^*(\I,\J)\le \non^*(\I^{-1}).$$
 \end{proposition}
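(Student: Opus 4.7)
The plan is to deduce both inequalities from the following set-theoretic identity, valid for every $X\subset G$ and every $I\subset G$:
$$
\bigcap_{x\in X}(xI)\;=\;G\setminus X\cdot(G\setminus I)\;=\;\{y\in G:y^{-1}X\subset I^{-1}\}.
$$
The first equality rewrites the intersection appearing in the definition of $\voc^*$ as the complement of a product set (which is what $\cov^*$ controls), and the second equality rewrites it as a ``covering by $I^{-1}$'' condition (which is what $\non^*(\I^{-1})$ controls).

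For the left inequality $\cov^*(\I',\J)\le\voc^*(\I,\J)$, I would first use the hypothesis $G\in\I\vee\I'$ to pick $I\in\I$ and $I'\in\I'$ with $G=I\cup I'$, so that $G\setminus I\subset I'$ belongs to $\I'$. Given any $X\subset G$ with $|X|<\cov^*(\I',\J)$, the defining property of $\cov^*(\I',\J)$ applied to the element $G\setminus I$ of $\I'$ yields $G\setminus(X\cdot(G\setminus I))\notin\J$, and the first half of the identity identifies this complement with $\bigcap_{x\in X}(xI)$. Thus the single element $I\in\I$ witnesses that $\cov^*(\I',\J)$ belongs to the set whose supremum defines $\voc^*(\I,\J)$, which gives the desired inequality.

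For the right inequality $\voc^*(\I,\J)\le\non^*(\I^{-1})$, I would show directly that no cardinal strictly exceeding $\non^*(\I^{-1})$ can occur in the supremum. Fix an arbitrary $I\in\I$; I want to produce $X\subset G$ with $|X|\le\non^*(\I^{-1})$ and $\bigcap_{x\in X}(xI)\in\J$. By the definition of $\non^*(\I^{-1})$ (which is meaningful because $G\notin\I$ forces $G\notin\I^{-1}$), I can choose $X$ of cardinality $\non^*(\I^{-1})$ such that $gX\notin\I^{-1}$ for every $g\in G$. For every $y\in G$, the set $y^{-1}X$ therefore does not belong to $\I^{-1}$; since $I^{-1}\in\I^{-1}$ and ideals are closed under taking subsets, this forces $y^{-1}X\not\subset I^{-1}$, i.e.\ $X\not\subset yI^{-1}$. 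The second half of the identity then shows that $\bigcap_{x\in X}(xI)=\emptyset\in\J$, as required.

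I do not expect real obstacles here; both arguments amount to unwinding definitions once the key identity is isolated. The only place that demands some care is the right inequality, where one has to keep the left/right translation and the inversion bookkeeping straight (for instance, to verify that ``$gX\notin\I^{-1}$ for all $g$'' really does imply ``$X\not\subset yI^{-1}$ for all $y$'' via the substitution $g=y^{-1}$).
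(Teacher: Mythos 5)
Your proof is correct and follows essentially the same route as the paper's: both inequalities rest on the complement identity $\bigcap_{x\in X}xI=G\setminus\bigl(X\cdot(G\setminus I)\bigr)$ together with the decomposition $G=I\cup I'$ supplied by $G\in\I\vee\I'$, and your argument for the upper bound (choosing $X$ with $gX\notin\I^{-1}$ for all $g$ and concluding $\bigcap_{x\in X}xI=\emptyset$) is the paper's argument verbatim up to the left/right bookkeeping. The only difference is presentational: you isolate the identity up front and argue directly, where the paper runs the first inequality by contradiction.
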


 \begin{proof} By the definition of the cardinal $\kappa=\cov^*(\I',\J)$, there exists a set $I'\in\I'$ such that $G\setminus (X\cdot I')\in\J$ for some set $X\subset G$ of cardinality $|X|=\cov^*(\I',\J)$. Since $G\in\I\vee\I'$, we can enlarge the set $I'$ and assume that $I:=G\setminus I'\in\I$. To show that $\kappa\le\voc^*(\I,\J)$ it suffices to show that for any subset $Y\subset G$ of cardinality $|Y|<\kappa$ we get $\bigcap_{y\in Y}yI\notin\J$. To derive a contradiction, assume that for some set $Y\subset G$ of cardinality $|Y|<\kappa$ the set $J=\bigcap_{y\in Y}yI$ belongs to the ideal $\J$. Then $G\setminus J=\bigcup_{y\in Y}y(G\setminus I)=\bigcup_{y\in Y}yI'$ and hence $G\setminus (Y\cdot I')\in\mathcal J$, which contradicts the definition of the cardinal $\cov^*(\I',J)>|Y|$.
\smallskip

 Next, we prove that $\voc^*(\I,\J)\le\non^*(\I^{-1})$. Assuming that $\voc^*(\I,\J)>\non^*(\I^{-1})$, we could find a set $I\in\I$ such that for any subset $X\subset G$ with $|X|\le\non^*(\I^{-1})$ we get $\bigcap_{x\in X}xI\notin\J$.  By the definition of the cardinal $\non^*(\I^{-1})$, there exists a set $X\subset G$ of cardinality $|X|=\non^*(\I^{-1})$ such that $gX\notin\I^{-1}$ for all $g\in G$. Then $X^{-1}g\notin\I$ and hence $X^{-1}g\not\subset I$ for all $g\in G$. We claim that $\bigcap_{x\in X}xI=\emptyset$. Indeed, for any point $g\in G$ we use $X^{-1}g\not\subset I$ to find a point $x\in X$ such that $x^{-1}g\notin I$ and hence $g\notin xI$.
 \end{proof}

A group $G$ is defined to be {\em Boolean} if $xxx=x$ for all $x\in G$. A typical example of a Boolean group is the Cantor cube $2^\w$. The following proposition (partially) answers Problem~\ref{probmain2}.

\begin{proposition}\label{p6} Let $\I\subset\J$ be two $\sigma$-ideals on a Boolean group $G$. If there exists an ideal $\I'$ on $G$ such that $\I\subset\I'$, $\J\not\subset\I'$ and $\cof(\I,\I')\le \voc^*(\J,\I')$, then the group $G$ contains a subgroup $H\in\J\setminus \I$.
\end{proposition}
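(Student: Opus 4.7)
The approach is to build the desired subgroup $H\in\J\setminus\I$ by transfinite recursion of length $\kappa:=\cof(\I,\I')$, enlarging $H$ at stage $\alpha$ by a single element $h_\alpha$ that lies in a fixed ``large'' set $J\in\J$ but avoids a given cofinal family for $\I$ in $\I'$.

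First I would gather the ingredients. Fix a family $\{A_\alpha:\alpha<\kappa\}\subset\I'$ realising $\cof(\I,\I')$, so that every $I\in\I$ is contained in some $A_\alpha$. From the hypothesis $\kappa\le\voc^*(\J,\I')$ I would extract a set $J\in\J$ such that
\[
\bigcap_{x\in X}xJ\notin\I'\quad\text{for every }X\subset G\text{ with }|X|<\kappa.
\]
Taking $X=\{e\}$ already gives $J\notin\I'$, which is where the assumption $\J\not\subset\I'$ enters. Replacing $J$ by $J\cup\{e\}$ if necessary, I would also arrange $e\in J$ (harmless whenever $\{e\}\in\J$).

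Next I would recursively build subgroups $H_\alpha$ of $G$ for $\alpha\le\kappa$, starting from $H_0:=\{e\}$ and taking $H_\alpha:=\bigcup_{\beta<\alpha}H_\beta$ at limit stages. At a successor step $\alpha+1$, a subgroup of the Boolean group $G$ generated by fewer than $\kappa$ elements has cardinality strictly less than $\kappa$, so $|H_\alpha|<\kappa$; the $\voc^*$-property of $J$ then yields $T_\alpha:=\bigcap_{h\in H_\alpha}hJ\notin\I'$. Since $A_\alpha\in\I'$, the difference $T_\alpha\setminus A_\alpha$ is nonempty, so I can pick $h_\alpha\in T_\alpha\setminus A_\alpha$ and set $H_{\alpha+1}:=\langle H_\alpha\cup\{h_\alpha\}\rangle$, which in the Boolean group $G$ equals $H_\alpha\cup h_\alpha H_\alpha$.

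Finally, for $H:=H_\kappa$ I would verify the two required memberships. The Boolean identity $h\cdot h=e$ turns the condition $h_\alpha\in hJ$ into $h\cdot h_\alpha\in J$, so a straightforward induction shows $H_\alpha\subset J$; hence $H\subset J\in\J$ and $H\in\J$. Conversely, $h_\alpha\in H\setminus A_\alpha$ for every $\alpha<\kappa$, and the cofinality of $\{A_\alpha\}_{\alpha<\kappa}$ for $\I$ in $\I'$ rules out $H\in\I$. The main obstacle I foresee is the set-theoretic extraction step: when the supremum defining $\voc^*(\J,\I')$ is attained at $\kappa$ the witness $J$ is immediate, whereas when $\kappa$ equals that supremum without being attained a separate argument (or the alternative reading of $\voc^*$ as the largest $\kappa$ realising the condition) is required.
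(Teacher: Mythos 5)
Your proposal is correct and follows essentially the same route as the paper's proof: a transfinite recursion of length $\cof(\I,\I')$ that picks each new generator inside $\bigcap_{h\in H_\alpha}hJ$ (nonempty modulo $\I'$ by the $\voc^*$ hypothesis) while avoiding the $\alpha$-th member of a cofinal family witnessing $\cof(\I,\I')$, using the Boolean structure to keep the generated subgroup inside $J$. The minor points you flag (adjoining the identity to $J$, and attainment of the supremum in the definition of $\voc^*$) are glossed over in the paper's argument as well and do not affect the substance.
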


\begin{proof} By the definition of $\voc^*(\J,\I')\ge\cof(\I,\I')$, there exists a set $A\in\J$ such that for any subset $X\subset A$ of cardinality $|X|<\cof(\I,\I')$ we get $\bigcap_{x\in X}xA\notin\I'$.
By the definition of the cardinal $\kappa=\cof(\I,\I')$, there is a subfamily $\{X_\alpha\}_{\alpha<\kappa}\subset \I'$ such that each set $I\in\I$ is contained in some set $X_\alpha$, $\alpha<\kappa$. By transfinite induction we shall construct a transfinite sequence of points $(x_\alpha)_{\alpha<\kappa}$ in $A$ such that for every $\alpha<\kappa$ the point $x_\alpha$ belongs to the set $\bigcap_{x\in H_{<\alpha}}xA\setminus X_\alpha$ where $H_{<\alpha}$ is the subgroup of $G$ generated by the set $\{x_\beta\}_{\beta<\alpha}$. It follows that for every $x\in H_{<\alpha}$ we get $x_\alpha\in xA$ and hence $x^{-1}x_\alpha=xx_\alpha\in A$. Consequently, $H_\alpha=H_{<\alpha}\cup x_\alpha H_{<\alpha}\subset A$. After completing the inductive construction, consider the subgroup $H$ generated by the set $\{x_\alpha\}_{\alpha<\kappa}$. It follows that $H\subset A$ and $H\not\subset X_\alpha$ for all $\alpha<\kappa$, which implies that $H\in\J\setminus\I$.
\end{proof}

For a locally compact topological group $G$ by $\M_G$ we shall denote the $\sigma$-ideal of meager sets in $G$, by $\mathcal N_G$ the $\sigma$-ideal of Haar null sets in $G$ and by $\mathcal E_G$ the $\sigma$-ideal generated by compact sets of Haar measure zero in $G$. If the group $G$ is clear from the context, they we shall omit the subscripts and shall denote these three $\sigma$-ideals by $\M,\N$ and $\E$. On the Cantor cube $2^\w$ the $\sigma$-ideal $\M,\N,\E$ were thoroughly studied in \cite{BaJu}. The cardinal characteristics of the ideals $\M,\N,\E$ on the Cantor cube $2^\w$ (or the real line) fit in the following extension of the famous C\'icho\'n diagram (in which an arrow $\kappa\to\lambda$ between two cardinals indicated that $\kappa\le\lambda$ in ZFC).
$$
\xymatrix{
&\cov(\N)\ar[r]&\non(\M)\ar@{=}[r]&\cof(\E,\N)\ar[r]&\cof(\E)\ar@{=}[r]&\cof(\M)\ar[r]&\cof(\N)\ar[r]
&\mathfrak c\\
&&\add(\E,\M)\ar[r]\ar[u]&\mathfrak b\ar[u]\ar[r]&\mathfrak d\ar[r]\ar[u]&\cof(\E,\M)\ar[u]\\
\w_1\ar[r]&\add(\N)\ar[r]\ar[uu]&\add(\M)\ar@{=}[r]\ar[u]&\add(\E)\ar[r]\ar[u]&\add(\E,\N)\ar@{=}[r]\ar[u]&
\cov(\M)\ar[r]\ar[u]&\non(\N)\ar[uu]
}$$
 All inequalities between the cardinal characteristics presented in this diagram are proved in Chapter 2 of \cite{BaJu} (more precisely, in Theorems 2.1.7, 2.2.9, 2.2.11, 2.3.7, 2.6.10, 2.6.14, 2.6.17 of \cite{BaJu}). It is well-known (see e.g. \cite{Vau} or \cite{Blass}) that Martin's Axiom collapses all cardinal characteristics of the ideals $\M,\N,\E$ appearing in this diagram to the continuum.

Applying Proposition~\ref{p6} to the ideals $\J,\I,\I'\in\{\M,\N,\M\cap\N,\E\}$, we get:

\begin{corollary}\label{c2} For the $\sigma$-ideals $\M,\N,\E$ on a Boolean locally compact topological group $G$ the following statements hold.
\begin{enumerate}
\item If $\cof(\mathcal N)\le\voc^*(\M,\N)$, then there exists a subgroup $H\in\M\setminus\N$.
\item If $\cof(\mathcal M)\le\voc^*(\N,\M)$, then there exists a subgroup $H\in\N\setminus\M$.
\item If $\cof(\E,\mathcal N)\le\voc^*(\M,\N)$, then there exists a subgroup $H\in\M\setminus\E$.
\item If $\cof(\E)\le\voc^*(\M\cap\N,\E)$, then there exists a subgroup $H\in(\M\cap\N)\setminus\E$.
\end{enumerate}
\end{corollary}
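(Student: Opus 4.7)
The plan is to reduce each of the four claims to a direct application of Proposition~\ref{p6} by selecting appropriate $\sigma$-ideals $\I\subset\J$ and $\I'$. The main subtlety is that Proposition~\ref{p6} yields a subgroup in $\J\setminus\I$ only under the containment $\I\subset\J$; consequently in parts (1) and (2) one cannot naively take $\I$ to be $\N$ (resp.\ $\M$) and $\J$ to be $\M$ (resp.\ $\N$). Instead I would replace $\I$ by a smaller $\sigma$-ideal sitting inside $\J$, exploiting the elementary inequality $\cof(\I_1,\I_2)\le\cof(\I_2)$ whenever $\I_1\subset\I_2$, which will allow the given hypotheses to imply the required condition $\cof(\I,\I')\le\voc^*(\J,\I')$.

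Specifically, for part (1) I would take $\I=\M\cap\N$, $\J=\M$, $\I'=\N$; then $H\in\J\setminus\I$ means $H\in\M\setminus\N$, and $\cof(\M\cap\N,\N)\le\cof(\N)\le\voc^*(\M,\N)$ by hypothesis. For part (2), symmetrically, $\I=\M\cap\N$, $\J=\N$, $\I'=\M$, using $\cof(\M\cap\N,\M)\le\cof(\M)\le\voc^*(\N,\M)$. For part (3), $\I=\E$, $\J=\M$, $\I'=\N$, noting that in a locally compact group every nonempty open set has positive Haar measure so closed Haar null sets are nowhere dense, whence $\E\subset\M\cap\N$; the hypothesis then gives $\cof(\E,\N)\le\voc^*(\M,\N)$ directly. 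For part (4), $\I=\E$, $\J=\M\cap\N$, $\I'=\E$, so that $\cof(\I,\I')=\cof(\E)$ and the hypothesis matches exactly.

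Alongside each choice there are the two side conditions $\I\subset\I'$ and $\J\not\subset\I'$ of Proposition~\ref{p6} to verify. The containments $\I\subset\I'$ are immediate in each case (using the observation above that $\E\subset\M\cap\N$ in parts (3) and (4)). The non-containments $\J\not\subset\I'$ reduce to $\M\not\subset\N$ in parts (1) and (3), and $\N\not\subset\M$ in part (2); both hold in every nontrivial compact Boolean group, since dense open sets of Haar measure strictly less than $1$ produce closed nowhere dense sets of positive measure, and conversely one has comeager Haar null sets. For part (4) the non-containment $\M\cap\N\not\subset\E$ is actually forced automatically, since $\voc^*(\I,\J)=1$ whenever $\I\subset\J$ (by the convention recorded after the definition of $\voc^*$), and the hypothesis $\voc^*(\M\cap\N,\E)\ge\cof(\E)\ge\aleph_0>1$ rules this out.

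I do not anticipate a serious obstacle: once the correct witnessing ideal $\I'$ has been chosen, each part is a one-line verification. The only point requiring genuine thought is recognizing that the containment $\I\subset\J$ demanded by Proposition~\ref{p6} must be engineered by substituting $\M\cap\N$ or $\E$ for the naive choice of $\I$; this is exactly what converts the four hypotheses of the corollary into applicable instances of the proposition.
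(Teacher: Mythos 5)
Your proposal is correct and is exactly the argument the paper intends: the text derives Corollary~\ref{c2} simply by ``applying Proposition~\ref{p6} to the ideals $\J,\I,\I'\in\{\M,\N,\M\cap\N,\E\}$'', and your choices of $(\I,\J,\I')$ in each part, together with the observation that one must substitute $\M\cap\N$ (resp.\ $\E$) for the naive $\I$ to secure $\I\subset\J$ and then use $\cof(\I,\I')\le\cof(\I')$, are precisely the details the paper leaves implicit. The side verifications ($\E\subset\M\cap\N$, $\M\not\subset\N$, $\N\not\subset\M$, and the automatic non-containment in part (4) via $\voc^*(\I,\J)=1$ for $\I\subset\J$) are all sound.
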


Corollary~\ref{c2} motivates the problem of calculating the cardinals $\voc^*(\I,\J)$ for the standard $\sigma$-ideals $\I,\J\in\{\M,\N,\M\cap\N,\E\}$. Proposition~\ref{p5} implies the following lower and upper bounds.

\begin{corollary}\label{c:voc*} For the ideals $\M,\N,\E$ on a locally compact topological group $G$ we get
\begin{enumerate}
\item $\cov(\N)\le\cov^*(\N)\le\voc^*(\M,\N)\le \voc^*(\M,\E)\le\voc^*(\M)\le\non^*(\M)=\non(\M)$;
\item $\cov(\M)\le\cov^*(\M)\le\voc^*(\N,\M)\le  \voc^*(\N,\E)\le\voc^*(\N)\le\non^*(\N)=\non(\N)$.
\end{enumerate}
\end{corollary}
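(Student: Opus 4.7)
The plan is to assemble both chains from three ingredients: Proposition~\ref{p5}, the monotonicity of $\voc^*$ in its second argument, and the equality $\non^*(\I)=\non(\I)$ for left-invariant ideals. Chain~(2) is obtained from chain~(1) by the substitution $\M\leftrightarrow\N$, so I would write out only chain~(1). The outermost pieces are essentially bookkeeping: $\non^*(\M)=\non(\M)$ follows from left-invariance (any minimal non-meager set $A$ witnesses $\non^*(\M)$ because $gA$ is non-meager for every $g$, and conversely $e\cdot X=X$ gives $\non(\M)\le\non^*(\M)$), while $\cov(\N)\le\cov^*(\N)$ is the general inequality $\cov(\I,\J)\le\cov^*(\I,\J)$ recorded right after the definition of $\cov^*$, specialized to $\I=\J=\N$.

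I would then prove an elementary monotonicity lemma: if $\J\subset\J'$, then $\voc^*(\I,\J')\le\voc^*(\I,\J)$. The argument is a one-line unpacking of the definition, since any $I\in\I$ for which $\bigcap_{x\in X}(xI)\notin\J'$ automatically satisfies $\bigcap_{x\in X}(xI)\notin\J$. Applied to the inclusions $\{\emptyset\}\subset\E\subset\N$, this yields the middle block
\[
\voc^*(\M,\N)\le\voc^*(\M,\E)\le\voc^*(\M,\{\emptyset\})=\voc^*(\M).
\]

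The two remaining inequalities are direct applications of Proposition~\ref{p5}. For $\cov^*(\N)\le\voc^*(\M,\N)$ I would take $\I=\M$, $\I'=\J=\N$; the proposition then gives $\cov^*(\N,\N)\le\voc^*(\M,\N)$ as soon as the hypothesis $G\in(\M\vee\N)\setminus\N$ is verified. For $\voc^*(\M)\le\non^*(\M)$ I would apply the upper half of Proposition~\ref{p5} with $\I=\M$, $\J=\{\emptyset\}$ and the auxiliary ideal $\I'=\mathcal{P}(G)$, obtaining $\voc^*(\M)\le\non^*(\M^{-1})=\non^*(\M)$; here the equality $\M^{-1}=\M$ uses that the class of meager sets in a topological group is invariant under inversion, and the final $=\non(\M)$ was already handled in the first step.

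The only non-formal step, and the one I expect to be the main obstacle, is the verification of $G\in(\M\vee\N)\setminus\N$ (and its sibling $G\in(\N\vee\M)\setminus\M$ for chain~(2)). The non-memberships $G\notin\N$ and $G\notin\M$ are immediate because the Haar measure of the whole locally compact group is positive and because every Baire space fails to be meager in itself. The membership $G\in\M\vee\N$ is the classical Oxtoby-type duality, which produces a decomposition $G=M\cup N$ with $M$ a meager $F_\sigma$ and $N$ a Haar-null $G_\delta$ in any non-discrete locally compact Polish group; the discrete-countable case is trivial, as then $\M$ and $\N$ both collapse to $\{\emptyset\}$ and all cardinals in the chain equal~$1$.
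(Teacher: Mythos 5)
Your proof is correct and follows exactly the route the paper intends: the corollary is stated there as an immediate consequence of Proposition~\ref{p5}, combined with the monotonicity of $\voc^*$ in its second argument over $\{\emptyset\}\subset\E\subset\N$ (resp.\ $\subset\M$), the inequality $\cov(\I,\J)\le\cov^*(\I,\J)$, and $\non^*(\I^{-1})=\non^*(\I)=\non(\I)$ for the inversion- and translation-invariant ideals $\M,\N$; your explicit verification of the hypothesis $G\in(\M\vee\N)\setminus\N$ via the Oxtoby decomposition is exactly the point the paper leaves tacit. The only slip is cosmetic: in the discrete case $\M$ and $\N$ collapse to $\{\emptyset\}$, so the cardinals in the chain are undefined rather than equal to $1$ --- but that case is already excluded by the paper's standing convention that $\bigcup\I=X\notin\I$.
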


Now we are going to evaluate the cardinal $\voc^*(\M\cap\N,\E)$.

\begin{lemma}\label{l:MNE} For locally compact topological groups $G,H$ we get
$$\voc^*(\M_{G\times H}\cap\N_{G\times H},\E_{G\times H})\ge \min\{\voc^*(\M_G,\E_G),\voc^*(\N_H,\E_H)\}.$$
\end{lemma}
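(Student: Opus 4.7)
The plan is to produce a witness for $\voc^*(\M_{G\times H}\cap\N_{G\times H},\E_{G\times H})$ at any level $\kappa$ strictly below the min on the right. So fix
$\kappa<\min\{\voc^*(\M_G,\E_G),\voc^*(\N_H,\E_H)\}$, and use the defining sup to choose $I_G\in\M_G$ with $\bigcap_{g\in X_G}gI_G\notin\E_G$ for every $X_G\subset G$ of size $<\kappa$, and similarly $I_H\in\N_H$ with $\bigcap_{h\in X_H}hI_H\notin\E_H$ for every $X_H\subset H$ of size $<\kappa$. The candidate witness is the rectangle
$$I=I_G\times I_H\subset G\times H.$$
Because $I\subset I_G\times H$ and $I_G$ is meager, $I$ is meager in $G\times H$; because $I\subset G\times I_H$ and $I_H$ is Haar null in $H$, Fubini for the product Haar measure gives $I\in\N_{G\times H}$. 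Hence $I\in\M_{G\times H}\cap\N_{G\times H}$.

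Next, for any $X\subset G\times H$ with $|X|<\kappa$, let $X_G=\pr_G(X)$ and $X_H=\pr_H(X)$, both of size $<\kappa$. Since $(g,h)\cdot I=gI_G\times hI_H$, a point $(a,b)$ lies in $\bigcap_{(g,h)\in X}(g,h)I$ iff $a\in gI_G$ for all $g\in X_G$ and $b\in hI_H$ for all $h\in X_H$. Therefore
$$\bigcap_{(g,h)\in X}(g,h)\,I\;=\;\Bigl(\bigcap_{g\in X_G}gI_G\Bigr)\times\Bigl(\bigcap_{h\in X_H}hI_H\Bigr)\;=:\;A_G\times A_H,$$
where $A_G\notin\E_G$ and $A_H\notin\E_H$ by the choice of $I_G$ and $I_H$. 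So the claim reduces to a Fubini-style statement for the ideal $\E$: if $A\subset G$ and $B\subset H$ are such that $A\times B\in\E_{G\times H}$, then $A\in\E_G$ or $B\in\E_H$.

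The main obstacle is precisely this Fubini lemma for $\E$, which is more delicate than the corresponding statement for $\N$ because one must produce \emph{closed} (not merely measurable) null witnesses in the factors. My plan is the following. Suppose $A\times B\subset\bigcup_{n\in\w}F_n$ with each $F_n$ closed and Haar null in $G\times H$. For each $n$ and $x\in G$ the section $F_{n,x}=\{y\in H:(x,y)\in F_n\}$ is closed in $H$. Let
$$N_n=\{x\in G:F_{n,x}\text{ is not Haar null in }H\}=\{x\in G:\mu_H(F_{n,x})>0\}.$$
Writing $F_n=\bigcup_k F_n\cap K_k$ with compact $K_k$ exhausting $G\times H$, and using that for a compact set $C\subset G\times H$ the function $x\mapsto\mu_H(C_x)$ is upper semicontinuous (approximate $C_x$ from above by open sets and invoke the tube lemma), one sees that each set $\{x:\mu_H((F_n\cap K_k)_x)\ge 1/m\}$ is closed. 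Summing over $k,m$, the set $N_n$ is $F_\sigma$; and by Fubini it has $\mu_G$-measure zero, so each of its closed pieces is closed Haar null, giving $N_n\in\E_G$.

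To finish, suppose $B\notin\E_H$. Then $A\subset\bigcup_{n}N_n$: otherwise pick $x_0\in A\setminus\bigcup_n N_n$; each $F_{n,x_0}$ is closed Haar null in $H$, and $B\subset\bigcup_{n}F_{n,x_0}$, which would put $B$ into $\E_H$, a contradiction. Hence $A\in\E_G$, finishing the Fubini lemma and thus the proof. The technical heart is therefore the semicontinuity/$F_\sigma$-cover argument for closed Haar null sets in the product; the rest of the argument is an organizational matching between the rectangle structure of $(g,h)(I_G\times I_H)$ and the projections of $X$.
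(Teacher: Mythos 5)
Your proposal is correct and follows essentially the same route as the paper: the witness is the rectangle $M\times N$ with $M\in\M_G$ and $N\in\N_H$ chosen from the defining suprema, and the crux is the same Fubini-type argument for $\E$ on the product, using that the level sets $\{x\in G:\mu_H(C_x)\ge 1/m\}$ of the section-measure function of a compact null set $C$ are compact (your tube-lemma semicontinuity argument is the paper's appeal to regularity of $\lambda_H$) and have $\mu_G$-measure zero. The only differences are organizational --- you isolate the statement ``$A\times B\in\E_{G\times H}$ implies $A\in\E_G$ or $B\in\E_H$'' as a separate lemma and argue directly rather than by contradiction --- and your passage from closed to compact pieces via an exhaustion is harmless given that $\E$ is generated by compact null sets.
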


\begin{proof} To derive a contradiction, assume that the cardinal $\kappa=\voc^*(\M_{G\times H}\cap\N_{G\times H},\E_{G\times H})$ is strictly smaller than $\min\{\voc^*(\M_G,\E_G),\voc^*(\N_H,\E_H)\}$. Let $\lambda_G$ and $\lambda_H$ be the Haar measures on the locally compact groups $G,H$, respectively. By a {\em Haar measure} on a locally compact group we understand any left-invariant Borel regular $\sigma$-additive measure that takes positive finite values on compact sets with non-empty interior. It is well-known that each locally compact topological group has a Haar measure and such measure is unique up to a positive multiplicative constant. This uniqueness result implies that the tensor product $\lambda=\lambda_G\otimes\lambda_H$ of the measures $\lambda_G$, $\lambda_H$ is a Haar measure on the group $G\times H$.

By the definition of the cardinals  $\voc^*(\M_G,\E_G),\voc^*(\N_H,\E_H)>\kappa$, there are sets $M\in\M_G$ and $N\in\N_H$ such that for any subsets $X\subset G$ and $Y\subset H$ with $\max\{|X|,|Y|\}\le\kappa$, we get $\bigcap_{x\in X}xM\notin\E_G$ and $\bigcap_{y\in Y}yN\notin\E_H$.
We claim that for any set $Z\subset G\times H$ of cardinality $|Z|\le\kappa$ we get $\bigcap_{z\in Z}z(M\times N)\notin\E_{G\times H}$. To derive a contradiction, assume that the intersection $\bigcap_{z\in Z}z(M\times N)$ belongs to the $\sigma$-ideal $\E_{G\times H}$ and hence can be covered by the union $\bigcup_{n\in\w}K_n$ of an increasing sequence $(K_n)_{n\in\w}$ of compact sets of Haar measure $\lambda(K_n)=0$ in $G\times H$. For every $n\in\w$ and point $x\in G$ consider the compact set $K_n^x=\{y\in H:(x,y)\in K_n\}$. The regularity of the measure $\lambda_H$ implies that for every $m\in\w$ the set $\{x\in G:\lambda_H(K_n^x)<2^{-m}\}$ is open in $G$. Then the set $P_{n,m}=\{x\in G:\lambda_H(K_n^x)\ge 2^{-m}\}$ is compact, being a closed subset of the projection of $K_n$ on $G$.
Let $X$ and $Y$ be the projections of the set $Z$ on $G$ and $H$, respectively.

We claim that $\bigcap_{x\in X}xM\subset \bigcup_{n,m\in\w}P_{n,m}$. Indeed, assuming that some point $g\in \bigcap_{x\in X}xM$ does not belong to $\bigcup_{n,m\in\w}P_{n,m}$, we conclude that $\lambda_H(K_n^g)=0$ for all $n\in\w$ and then
$$\{g\}\times\bigcap_{y\in Y}yN\subset(\{g\}\times H)\cap\bigcap_{z\in Z}z(M\times N)\subset
 (\{g\}\times H)\cap \bigcup_{n\in\w}K_n=\{g\}\times\bigcup_{n\in\w} K_n^g,$$
 which implies that the set $\bigcap_{y\in Y}yN\subset\bigcup_{n\in\w}K_n^g$ belongs to the $\sigma$-ideal $\E_H$. But this contradicts the choice of the set $N$. This contradiction shows that
 the set $\bigcap_{x\in X}xM\notin\E_G$ belongs to the $\sigma$-compact set $\bigcup_{n,m\in\w}P_{n,m}$, which implies that $\lambda_G(P_{n,m})>0$ for some $n,m\in\w$. Since for every $x\in P_{n,m}$ the set $K_n^x$ has measure $\lambda_H(K_n^x)\ge 2^{-m}$, the Fubini Theorem \cite[3.4.4]{Bog} guarantees that $\lambda(K_n)\ge 2^{-n}\cdot \lambda_G(P_{n,m})>0$. But this contradicts the choice of the set $K_n$.

 This contradiction shows that the set $M\times N\in\M_{G\times H}\cap\N_{G\times H}$ witnesses that $\voc^*(\M_{G\times H}\cap\N_{G\times H})>\kappa$, which contradicts the definition of the cardinal $\kappa$.
 \end{proof}

Lemma~\ref{l:MNE} will help us to evaluate the cardinal $\voc^*(\M\cap\N,\E)$ for any locally compact group $G$, isomorphic to its square $G\times G$.

\begin{corollary}\label{c:MiN} If a locally compact group $G$ is topologically isomorphic to its square $G\times G$, then $$\voc^*(\M\cap\N,\E)=\min\{\voc^*(\M,\E),\voc^*(\N,\E)\}.$$
 \end{corollary}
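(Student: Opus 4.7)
The plan is to derive the corollary by combining Lemma~\ref{l:MNE} with a trivial monotonicity of the cardinal $\voc^*$ in its first argument, while invoking the hypothesis $G\cong G\times G$ to transport the product statement back to $G$.

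First I would establish the $\ge$ inequality. Applying Lemma~\ref{l:MNE} to $H=G$ gives
\[
\voc^*(\M_{G\times G}\cap\N_{G\times G},\E_{G\times G})\ge \min\{\voc^*(\M_G,\E_G),\voc^*(\N_G,\E_G)\}.
\]
A topological group isomorphism $\varphi:G\to G\times G$ transports Haar measure to a constant multiple of the product Haar measure, hence identifies the $\sigma$-ideals $\M_{G\times G}$, $\N_{G\times G}$, $\E_{G\times G}$ with $\M_G$, $\N_G$, $\E_G$ respectively. Moreover, the cardinal $\voc^*(\I,\J)$ is manifestly invariant under group isomorphism since its definition involves only the group operation, the ideal $\I$, and the ideal $\J$. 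Thus the left-hand side of the displayed inequality equals $\voc^*(\M_G\cap\N_G,\E_G)$, yielding the desired lower bound.

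For the reverse inequality I would just observe the general monotonicity: whenever $\I_1\subset\I_2$ are ideals on a group with common right-hand ideal $\J$, any witness $I\in\I_1$ to $\voc^*(\I_1,\J)\ge\kappa$ is automatically a witness that $I\in\I_2$ satisfies the same property, so $\voc^*(\I_1,\J)\le\voc^*(\I_2,\J)$. Applying this to the inclusions $\M\cap\N\subset\M$ and $\M\cap\N\subset\N$ yields
\[
\voc^*(\M\cap\N,\E)\le\voc^*(\M,\E)\quad\text{and}\quad\voc^*(\M\cap\N,\E)\le\voc^*(\N,\E),
\]
hence $\voc^*(\M\cap\N,\E)\le\min\{\voc^*(\M,\E),\voc^*(\N,\E)\}$. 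Combining the two inequalities finishes the proof.

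There is no real obstacle here; the substantive work has been done in Lemma~\ref{l:MNE}. The only point requiring a brief justification is the invariance of the three $\sigma$-ideals and of $\voc^*$ under the topological group isomorphism $G\cong G\times G$, which I would handle in one sentence using the uniqueness of Haar measure up to a positive multiplicative constant (as already recalled in the proof of Lemma~\ref{l:MNE}).
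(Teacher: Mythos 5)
Your proof is correct and is exactly the derivation the paper intends (the corollary is stated without proof as an immediate consequence of Lemma~\ref{l:MNE}): the lower bound comes from the lemma with $H=G$ together with the observation that a topological isomorphism $G\cong G\times G$ identifies the three $\sigma$-ideals and preserves $\voc^*$, and the upper bound is the evident monotonicity of $\voc^*(\,\cdot\,,\E)$ in its first argument applied to $\M\cap\N\subset\M$ and $\M\cap\N\subset\N$. Nothing is missing.
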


Combining Corollaries~\ref{c2}, \ref{c:voc*}, \ref{c:MiN} with the equalities $\cof(\mathcal E,\N)=\non(\M)$ and $\cof(\mathcal E)=\cof(\M)$ (proved in  Theorems 2.6.14 and 2.6.17 of \cite{BaJu}) we get another corollary.

\begin{corollary}\label{c4} The compact Boolean group $2^\w$ contains a subgroup
\begin{enumerate}
\item $H_1\in\M\setminus\N$ if $\cof(\N)=\cov(\N)$.
\item $H_2\in\M\setminus\mathcal E$ if $\non(\M)=\cov(\N)$.
\item $H_3\in(\M\cap\mathcal N)\setminus\mathcal E$ if $\cof(\M)=\cov(\M)=\cov(\N)$.
\end{enumerate}
\end{corollary}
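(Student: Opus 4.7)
The plan is to derive each of the three existence claims by chaining together Corollary~\ref{c2} (which converts inequalities of the form $\cof(\mathcal{I},\mathcal{I}')\le\voc^*(\mathcal{J},\mathcal{I}')$ into subgroups in $\mathcal{J}\setminus\mathcal{I}$), Corollary~\ref{c:voc*} (which lower-bounds $\voc^*(\M,\cdot)$ and $\voc^*(\N,\cdot)$ by $\cov(\N)$ and $\cov(\M)$ respectively), Corollary~\ref{c:MiN} (which decomposes $\voc^*(\M\cap\N,\E)$ as a minimum, using that $2^\w$ is topologically isomorphic to its square $2^\w\times 2^\w$), and the classical equalities $\cof(\E,\N)=\non(\M)$ and $\cof(\E)=\cof(\M)$ on $2^\w$ recorded from \cite[Theorems 2.6.14 and 2.6.17]{BaJu}.

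For part (1), I would invoke Corollary~\ref{c:voc*}(1) to obtain $\cov(\N)\le\voc^*(\M,\N)$; since we assume $\cof(\N)=\cov(\N)$ this gives $\cof(\N)\le\voc^*(\M,\N)$, and Corollary~\ref{c2}(1) then yields the subgroup $H_1\in\M\setminus\N$. For part (2), the identity $\cof(\E,\N)=\non(\M)$ turns the hypothesis $\non(\M)=\cov(\N)$ into $\cof(\E,\N)=\cov(\N)\le\voc^*(\M,\N)$ (again by Corollary~\ref{c:voc*}(1)), and Corollary~\ref{c2}(3) supplies $H_2\in\M\setminus\E$.

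For part (3), which is the most involved case, I would first use that $2^\w\cong 2^\w\times 2^\w$ as topological groups to apply Corollary~\ref{c:MiN}, giving
\[
\voc^*(\M\cap\N,\E)=\min\{\voc^*(\M,\E),\voc^*(\N,\E)\}.
\]
The two lower bounds $\cov(\N)\le\voc^*(\M,\E)$ and $\cov(\M)\le\voc^*(\N,\E)$ from Corollary~\ref{c:voc*} combine to
\[
\min\{\cov(\M),\cov(\N)\}\le\voc^*(\M\cap\N,\E).
\]
Under the hypothesis $\cof(\M)=\cov(\M)=\cov(\N)$, together with $\cof(\E)=\cof(\M)$, we get $\cof(\E)\le\voc^*(\M\cap\N,\E)$, and Corollary~\ref{c2}(4) furnishes the desired $H_3\in(\M\cap\N)\setminus\E$.

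None of the steps requires genuine new work: the whole argument is a bookkeeping exercise lining up the preceding abstract machinery with the two specific Bartoszyński--Judah equalities. The only point that warrants care is the verification that the hypotheses of Corollary~\ref{c2} are actually met in the Boolean group $2^\w$, which reduces to the trivial observation that $2^\w$ is Boolean, compact, and self-square; the genuine mathematical content sits upstream in Corollaries~\ref{c2}, \ref{c:voc*} and \ref{c:MiN} (and in Lemma~\ref{l:MNE} supporting the last).
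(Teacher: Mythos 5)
Your proposal is correct and follows exactly the route the paper intends: the paper derives Corollary~\ref{c4} precisely by combining Corollaries~\ref{c2}, \ref{c:voc*} and \ref{c:MiN} with the Bartoszy\'nski--Judah equalities $\cof(\E,\N)=\non(\M)$ and $\cof(\E)=\cof(\M)$, and your case-by-case bookkeeping (including the use of $2^\w\cong 2^\w\times 2^\w$ for part (3)) matches this verbatim.
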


\begin{remark}
By Theorem~\ref{BurPaw}, it is consistent that each meager subgroup of $2^\w$ belongs to the ideal $\mathcal N$. So, the first statement of Corollary~\ref{c4} cannot be proved in ZFC. We do not know if the same is true for the other two statements of Corollary~\ref{c4}.
\end{remark}

\begin{problem} Calculate the values of the cardinal characteristics $\voc^*(\M,\I)$ and $\voc^*(\N,\I)$ for the $\sigma$-ideals $\I\in\{\F,\E,\mathcal M\cap\N,\M,\N\}$ on the group $2^\w$. Are they equal to some known cardinal characteristics of the continuum?
\end{problem}

Corollary~\ref{c:voc*} combined with known information \cite{BaJu} about the cardinal characteristics of the $\sigma$-ideals $\M,\N,\E$ of the Cantor cube $2^\w$ allow us to draw the following diagram.
{\small
$$\xymatrix{
\cov^*(\M)\ar[r]&\voc^{\!*}\!(\N\!,\M)\ar[r]&\voc^*\!(\N\!,\E)\ar[r]&\voc^*(\N)\ar[r]&\non^*(\N)\ar@{=}[r]&\non(\N)\ar[r]&\cof(\N)\\
\cov(\M)\ar@{=}[r]\ar[u]&\add(\E,\N)\ar[rr]\ar[u]&&\mathfrak d\ar[r]&\cof(\E,\M)\ar[r]&\cof(\E)\ar@{=}[r]\ar[u]&\cof(\M)\ar[u]\\
\add(\M)\ar@{=}[r]\ar[u]&\add(\E)\ar[r]\ar[u]&\add(\E\!,\!\M)\ar[r]&\mathfrak b\ar[rr]\ar[u]&&\cof(\E,\N)\ar@{=}[r]\ar[u]&\non(\M)\ar[u]\\
\add(\M)\ar[r]\ar[u]&\cov(\N)\ar[r]\ar[u]&\cov^*(\N)\ar[r]&\voc^*(\M,\N)\ar[r]&\voc^{\!*}\!(\M,\E)\ar[r]& \voc^*(\M)\ar[r]\ar[u]&\non^{\!*}(\M)\ar@{=}[u]
}$$
}

\begin{remark} The cardinal characteristic $\cov^*(\I)$ of various $\sigma$-ideals $\I$ on Polish groups have been  studied in \cite[\S2.7]{BaJu}, \cite{MS}, \cite{DM}, \cite{ES}, \cite{ET}. By \cite{DM}, for a non-locally compact Polish group $G$ possessing an invariant metric we get $\cov^*(\M)=\cov(\M)$. On the other hand, it is consistent \cite{MS} that for some locally compact groups $G$ (like $\IR$, $\IR/\IZ$ or $2^\w$) we get $\cov^*(\M)>\cov(\M)$. This implies the consistence of a strict inequality $\cov(\M)<\voc^*(\M,\N)$. By \cite{ET}, each Polish locally compact abelian group $G$ has $\cov^*(\E)\le \cof(\N)$ and it is consistent that $\cov^*(\E)<\cof(\N)$.
\end{remark}

For the Polish group $\IZ^\w$ the cardinal $\voc^*(\M)$ can be easily calculated using the combinatorial characterization of the cardinal $\non(\M)$.

\begin{proposition} For the Polish group $G=\IZ^\w$ we get $\voc^*(\M)=\non(\M)$.
\end{proposition}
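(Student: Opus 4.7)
The plan is to verify $\voc^*(\M) \le \non(\M)$ and $\voc^*(\M) \ge \non(\M)$ separately.

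For the upper bound, I would apply Proposition~\ref{p5} with $\I = \M$, $\I' = \mathcal{P}(G)$, $\J = \{\emptyset\}$; the hypothesis $G \in (\I\vee\I')\setminus\J$ holds trivially, so the proposition yields $\voc^*(\M) \le \non^*(\M^{-1})$. Since the ideal $\M$ of meager subsets of $\IZ^\w$ is translation-invariant and symmetric, one has $\non^*(\M^{-1}) = \non^*(\M) = \non(\M)$, giving the desired inequality.

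For the lower bound I would use the single meager set
\[
M = \bigl\{f \in \IZ^\w : f(n) \neq 0 \text{ for all but finitely many } n\bigr\} = \bigcup_{k \in \w} F_k,
\]
where $F_k = \{f \in \IZ^\w : f(n) \neq 0 \text{ for all } n \ge k\}$. Each $F_k$ is closed (an intersection of clopen sets) and has empty interior, because every basic cylinder in $\IZ^\w$ contains functions vanishing at arbitrarily large coordinates; thus $M \in \M$. Given $X \subset \IZ^\w$ with $|X| < \non(\M)$, the task is to produce $g \in \IZ^\w$ such that $g - x \in M$ for every $x \in X$, i.e., $g(n) \neq x(n)$ for cofinitely many $n$ for each $x \in X$. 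Such a $g$ is delivered by Bartoszy\'nski's combinatorial characterisation of $\non(\M)$: this cardinal equals the least cardinality of a family $\F \subset \w^\w$ such that for every $h \in \w^\w$ some $f \in \F$ satisfies $f(n) = h(n)$ for infinitely many $n$. Since the underlying set $\IZ$ is countably infinite, the characterisation transfers verbatim to $\IZ^\w$; in particular, a set $X$ of cardinality $<\non(\M)$ fails the ``infinitely often equal'' property, supplying the desired witness $g$.

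The main obstacle, in my view, is identifying the correct side of Bartoszy\'nski's characterisation --- it is easy to confuse the ``eventually different'' and ``infinitely often equal'' variants, which compute $\cov(\M)$ and $\non(\M)$ respectively. Once the right variant is paired with the right meager set, the argument is essentially forced: $M$ is nothing other than the translation-invariant gadget encoding ``eventually different from zero''.
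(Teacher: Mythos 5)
Your proof is correct and follows essentially the same route as the paper: the upper bound via Proposition~\ref{p5} together with $\non^*(\M^{-1})=\non(\M)$, and the lower bound via the same meager witness $\{f\in\IZ^\w:|f^{-1}(0)|<\w\}$ combined with Bartoszy\'nski's ``infinitely often equal'' characterization of $\non(\M)$ (Theorem 2.4.7 of Bartoszy\'nski--Judah), which is exactly the theorem the paper cites. The only cosmetic difference is that you spell out the verification that the witness set is meager and the transfer of the characterization from $\w^\w$ to $\IZ^\w$, which the paper leaves implicit.
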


\begin{proof} In the Polish group $G$ consider the meager subset $F=\{x\in\IZ^\w:|\{n\in\w:x(n)=0\}|<\w\}$. We claim that for any subset $X\subset G$ of cardinality $|X|<\non(\M)$ we get $\bigcap_{x\in X}xF\ne\emptyset$. Indeed, by Theorem 2.4.7 \cite{BaJu}, there exists a function $y\in\IZ^\w$ such that for every $x\in X$ the set $\{n\in\w:x(n)=y(n)\}$ is finite and hence $y\in xF$. This means that the intersection $\bigcap_{x\in X}xF\ni y$ is not empty and hence $\voc^*(\M)\ge\non(\M)$. The inequality $\voc^*(\M)\le\non(\M)$ follows from Proposition~\ref{p5}.
\end{proof}

\begin{problem} Is $\voc^*(\M)=\non(\M)$ for the ideal $\M$ of meager sets on the Cantor cube $2^\w$?
\end{problem}
\newpage


\begin{thebibliography}{}

\bibitem{Banach} S.~Banach, {\"Uber Metrische Gruppen}, Studia Math. {\bf 3} (1931), 101--113.

\bibitem{B?} T.~Banakh, L.~Karchevska, I.~Malyuha, A.~Ravsky, {\em Generically Haar-meager sets in Polish groups}, in preparation.


\bibitem{Blass} A.~Blass, {\em Combinatorial cardinal characteristics of the continuum}, Handbook of set theory. Vols. 1, 2, 3, 395--489, Springer, Dordrecht, 2010.

\bibitem{Bog} V.~Bogachev, {\em Measure theory}. I. Springer-Verlag, Berlin, 2007.

\bibitem{Christ} J.P.~Christensen, {\em On sets of Haar measure zero in abelian Polish groups}, Israel J. Math. {\bf 13} (1972), 255--260.


\bibitem{BaJu} T.~Bartoszynski, H.~Judah, {\em Set theory. On the structure of the real line}, A K Peters, Ltd., Wellesley, MA, 1995.

\bibitem{Burke} M.Burke, {\em A theorem of Friedman on rectangle inclusion and its consequences},

\bibitem{Darji} U.~Darji, {\em On Haar meager sets}, Topology Appl. {\bf 160}:18 (2013), 2396--2400.

\bibitem{DM} T.~Dobrowolski, W.~Marciszewski, {\em Covering a Polish group by translates of a nowhere dense set}, Topology Appl. {\bf 155}:11 (2008), 1221--1226.

\bibitem{D1} P.~Dodos, {\em On certain regularity properties of Haar-null sets}, Fund. Math. {\em 181}:2 (2004), 97--109.

\bibitem{D2} P.~Dodos, {\em Dichotomies of the set of test measures of a Haar-null set}, Israel J. Math. 144 (2004), 15--28.

\bibitem{Dodos} P.~Dodos, {\em The Steinhaus property and Haar-null sets}, Bull. Lond. Math. Soc. {\em 41}:2 (2009), 377--384.

\bibitem{ES} M.Elekes, J.~Stepr\={a}ns, {\em Less than $2\sp \omega$ many translates of a compact nullset may cover the real line}, Fund. Math. {\bf 181}:1 (2004), 89--96.

\bibitem{ET} M.~Elekes, \'A,T\'oth, {\em Covering locally compact groups by less than $2\sp \omega$ many translates of a compact nullset}, Fund. Math. {\bf 193}:3 (2007), 243--257.

\bibitem{EV} P.~Erd\"os, B.~Volkmann, {\em Additive Gruppen mit vorgegebener Hausdorffscher Dimension}, J. Reine Angew. Math. {\bf 221} (1966) 203--208.

\bibitem{Fal} K.~Falconer, {\em Fractal geometry. Mathematical foundations and applications}, John Wiley \& Sons, Ltd., Chichester, 1990.

\bibitem{Haight} J.A.~Haight, {\em Difference covers which have small $k$-sums for any $k$}, Mathematika {\bf 20} (1973), 109--118.

\bibitem{JWW} T.H.~Jackson, J.H.~Williamson, D.R.~Woodall, {\em Difference-covers that are not $k$-sum-covers. I}, Proc. Cambridge Philos. Soc. {\em 72} (1972), 425--438.

\bibitem{Jab} E.~Jablo\'nska, {\em Some analogies between Haar meager sets
and Haar null sets in abelian Polish groups},  J. Math. Anal. Appl. {\bf 421} (2015), 1479--1486.

\bibitem{La} M.~Laczkovich, {\em Analytic subgroups of the reals}, Proc. Amer. Math. Soc. {\bf 126}:6 (1998), 1783--1790.

\bibitem{Ke} A.S.~Kechris, {\em Classical descriptive set theory}, Springer-Verlag, New York, 1995.

\bibitem{Miller} A.~Miller, {\em Mapping a set of reals onto the reals}, J. Symbolic Logic {\bf 48}:3 (1983),  575--584.

\bibitem{Mil84} A.~Miller, {\em Special subsets of the real line}, in: Handbook of set-theoretic topology, 201--233, North-Holland, Amsterdam, 1984.

\bibitem{MS} A.~Miller, J.~Stepr\={a}ns, {\em The number of translates of a closed nowhere dense set required to cover a Polish group}, Ann. Pure Appl. Logic {\bf 140}:1 (2006), 52--59.

\bibitem{Piccard} S.~Piccard, {\em Sur les ensembles de distances des ensembles de points d'un espace Euclidien}, Mem. Univ. Neuchatel, vol. 13. Secretariat de l'Universite, Neuchatel, 1939. 212 pp.

\bibitem{Pettis} B.J.~Pettis, {\em Remarks on a theorem of E. J. McShane}, Proc. Amer. Math. Soc. {\bf 2} (1951), 166--171.

\bibitem{Paw} J.~Pawlikowski, {\em On a rectangle inclusion problem}, Proc. Amer. Math. Soc. {\bf 123}:10 (1995), 3189--3195.




\bibitem{S20} H.~Steinhaus, {\em Sur les distances des points de mesure positive}, Fund. Math. {\bf 1} (1920), 93--104.

\bibitem{Stromb} K.~Stromberg, {\em An elementary proof of Steinhaus's theorem}, Proc. Amer. Math. Soc. {\bf 36} (1972), 308.

\bibitem{Tal} M.~Talagrand, {\em Compacts de fonctions mesurables et filtres non mesurables}, Studia Math. {\bf 67}:1 (1980),  13--43.

\bibitem{Todor} S.~Todorcevic, {\em Introduction to Ramsey spaces}, Princeton University Press, Princeton, NJ, 2010.

\bibitem{Vau} J.~Vaughan, {\em Small uncountable cardinals and topology},in: Open problems in topology, 195--218, North-Holland, Amsterdam, 1990.

\bibitem{Zak1} P.~Zakrzewski, {\em Universally meager sets}, Proc. Amer. Math. Soc. {\bf 129}:6 (2001), 1793--1798

\bibitem{Zak2} P.~Zakrzewski, {\em Universally meager sets. II}, Topology Appl. {\bf 155}:13 (2008), 1445--1449.
\end{thebibliography}
\end{document}